\DeclareMathAlphabet{\mathscr}{LS1}{stixscr}{m}{n}
\pgfplotsset{compat=newest}
\newcommand{\Pic}{\operatorname{Pic}}
\newcommand{\K}{\mathbb{K}}
\newcommand{\Vect}{\operatorname{Vect}}
\newtheorem{theorem}{Theorem}[section]
\newtheorem{corollary}[theorem]{Corollary}
\newtheorem{lemma}[theorem]{Lemma}
\newtheorem{proposition}[theorem]{Proposition}
\theoremstyle{definition}
\newtheorem{definition}[theorem]{Definition}
\newtheorem{example}[theorem]{Example}
\newtheorem{remark}[theorem]{Remark}
\numberwithin{equation}{section}
\title{Projective and anomalous representations of categories and their linearizations}
\author{Domenico Fiorenza}
\address{Sapienza Universit\`a di Roma; Dipartimento di Matematica ``Guido Castelnuovo'', P.le Aldo Moro, 5 - 00185 - Roma, Italy; 
}
\email{fiorenza@mat.uniroma1.it}
\author{Chetan Vuppulury}
\address{Sapienza Universit\`a di Roma; Dipartimento di Matematica ``Guido Castelnuovo'', P.le Aldo Moro, 5 - 00185 - Roma, Italy; }
\email{chetan.vuppulury@uniroma1.it}
\begin{document}

\begin{abstract}
    We invesigate the relation between projective and anomalous representations of categories, and show how to any anomaly $J\colon \mathcal{C}\to 2\mathrm{Vect}$ one can associate an extension $\mathcal{C}^J$ of $\mathcal{C}$ and a subcategory $\mathcal{C}^J_{\mathrm{ST}}$ of $\mathcal{C}^J$ with the property that: (i) anomalous representations of $\mathcal{C}$ with anomaly $J$ are equivalent to $\mathrm{Vect}$-linear functors $E\colon \mathcal{C}^J\to \mathrm{Vect}$, and (ii) these are in turn equivalent to linear representations of $\mathcal{C}^J_{\mathrm{ST}}$ where ``$J$ acts as scalars''. This construction, inspired by and generalizing the technique used to linearize anomalous functorial field theories in the physics literature, can be seen as a multi-object version of the classical relation between projective representations of a group $G$, with given $2$-cocycle $\alpha$, and linear representations of the central extension $G^\alpha$ of $G$ associated with $\alpha$.    
\end{abstract}

\maketitle

\setcounter{tocdepth}{1}
\tableofcontents

\section{Introduction}

{A functorial field theory is a (smooth) symmetric monoidal functor from a category of bordisms endowed with geometric structure (e.g., a Riemannian metric) to a linear category, such as finite-dimensional vector spaces or Hilbert spaces. This perspective, originating in the work of Atiyah and Segal \cite{atiyah, segal}, has become a fundamental organizing principle in quantum field theory. For instance, a 2-dimensional conformal field theory with values in super (i.e.\ $\mathbb{Z}/2$-graded) Hilbert spaces can be described as a symmetric monoidal functor
\[
Z\colon \mathrm{Bord}_2^{\mathrm{conf}}\to \mathrm{sHilb},
\]
where $\mathrm{Bord}_2^{\mathrm{conf}}$ denotes the category of 1-dimensional spin manifolds and conformal spin bordisms.
A recurring phenomenon in the construction of such theories is a systematic but controlled failure of strict functoriality, commonly referred to as an \emph{anomaly} in the physics literature. This phenomenon has been extensively studied from both physical and mathematical perspectives; see, for example, the recent overview by Freed in \cite{freed-anomalies} as well as the treatment in \cite{Freed2014, freed-teleman}. A typical example is the fermionic anomaly of conformal spin field theories \cite{ludewig-roos}. 

One way to accommodate anomalies is to enlarge the bordism category so that the additional data compensates for the failure of functoriality. For instance, in \cite{what-is-an-elliptic-object}, Stolz and Teichner define a Clifford linear conformal field theory of degree $n\in \mathbb{Z}$, as a continuous functor
\[
E \colon  \mathrm{CliffordBord}_{2;n}^{\mathrm{conf}}\to \mathrm{sHilb},
\]
where $ \mathrm{CliffordBord}_{2;n}^{\mathrm{conf}}$ is a category whose objects are the same as $\mathrm{Bord}_2^{\mathrm{conf}}$, but whose morphisms are pairs $(\Sigma, \Psi)$, where $\Sigma$ is a conformal spin bordism, and $\Psi$ is an element in the $n$-th tensor power of the fermionic anomaly line. While this construction may appear somewhat ad hoc at first sight, it reflects a more general and conceptually natural mechanism.

From a higher-categorical viewpoint, anomalous functoriality can be encoded in several equivalent ways. On the one hand, one may consider functors valued in a projectivization of a linear category, in which scalar automorphisms are promoted to elements in the Picard 2-group or its higher versions. On the other hand, anomalous theories can be described in terms of cocycle data valued in suitable higher groupoids, together with coherence data controlling the failure of strict functoriality. A third perspective realizes anomalous theories as honest linear functors defined on suitable extensions of the source category classified by such cocycles. Variants of these constructions appear throughout the literature on higher category theory, projective representations, and extended field theories; in addition to the aforementioned references, see, for example, \cite{lurie-cobordism, stolz-teichner-susy, bartlett-douglas-schommer-pries-vicary, fiorenza-valentino, johnson-freyd-scheimbauer, scheimbauer-stempfhuber}.

These different viewpoints are closely related to well-known categorical constructions. In particular, the passage from projectivized targets to cocycle data and vice versa can be understood in terms of universal properties of categorical quotients, while the relationship between cocycle-adapted coherence data and extensions is expressed through a Grothendieck construction, and so it is a version of the straightening/unstraightening correspondence in higher category theory \cite{htt}. Despite their ubiquity, these perspectives are often developed in different contexts and with different conventions, and their precise relationship is not always made explicit.

The aim of the present article is to provide a systematic and self-contained comparison of these approaches in the setting of higher categories. More precisely, given a functor
\[
J\colon \mathcal{C}\to 2\mathrm{Vect},
\]
where $\mathcal{C}$ is an $\infty$-category and $2\mathrm{Vect}$ denotes the Morita 2-category of finite-dimensional algebras, bimodules, and intertwiners, we introduce a notion of $\mathrm{Vect}$-valued anomalous representations of $\mathcal{C}$ with anomaly $J$. We show that these are equivalent to $\mathrm{Vect}$-linear functors defined on a canonical extension $\mathcal{C}^J$ of $\mathcal{C}$, which carries a natural structure of module category over $\mathrm{Vect}$. 

Moreover, $\mathcal{C}^J$ contains a distinguished subcategory $\mathcal{C}^J_{\mathrm{ST}}$, with the same objects as $\mathcal{C}$, such that $\mathrm{Vect}$-linear functors from $\mathcal{C}^J$ to $\mathrm{Vect}$ correspond to linear representations of $\mathcal{C}^J_{\mathrm{ST}}$ in which the anomaly $J$ acts by scalars. This result can be regarded as a multi-object  analogue of the Eilenberg--Watts theorem on additive and cocontinuous functors
between categories of modules \cite{eilenberg, watts}, and is closely related to the classical correspondence between projective representations of a group and linear representations of its central extensions. In particular, when the anomaly $J$ is invertible and suitably structured, anomalous representations reduce to projective representations. In this sense, the framework developed here provides a natural higher-categorical generalization of projective representation theory.

The constructions considered in this paper admit further generalizations, for instance to super vector spaces and to analytic settings involving (super) Hilbert spaces and (super) 2-Hilbert spaces, modelled as the 2-category of (super) von Neumann algebras, Hilbert bimodules and continuous intertwiners. \cite{schmidpeter}. While we do not pursue these analytic aspects in detail here, the formalism developed in the present work applies directly to such settings. In particular, when applied to the $n$-th tensor power of the fermionic anomaly
\[
J_{\mathrm{Fermionic}}\colon \mathrm{Bord}_2^{\mathrm{conf}}\to \mathrm{s2Hilb},
\]
our construction recovers the degree $n$ Clifford-linear conformal field theories  of Stolz and Teichner as instances of a general universal procedure.
}
\vskip 1cm

This article is based on C.V. PhD Thesis \cite{vuppulury}. We thank Matthias Ludewig, Christoph Schweigert, Jim Stasheff, {and the referee} for very useful comments and suggestions. D.F.  was partially supported by 2023 Sapienza research grant ``Representation Theory and Applications", by 2024 Sapienza research grant ``Global, local and infinitesimal aspects of moduli spaces'', and by PRIN 2022 ``Moduli spaces and special varieties'' CUP B53D23009140006. D.F. is a member of the Gruppo Nazionale per le Strutture Algebriche, Geometriche e le loro Applicazioni.
(GNSAGA-INdAM).

\section{Notation and conventions}
Throughout the whole paper, $\K$ will denote a field. By $\Vect_\K$ we denote the symmetric monoidal category of finite dimensional vector spaces over $\K$. For $A$ and $B$ two $\K$-algebras, ${}_{A}\mathbf{Mod}_{B}$ will denote the abelian category of $(A,B)$-bimodules that are finite dimensional over $\K$. By $2\mathrm{Vect}_\K$ we denote the Morita 2-category of finite dimensional $\K$-algebras, bimodules, and intertwiners. That is, $2\mathrm{Vect}_\K$ is the 2-category whose objects are finite dimensional $\K$-algebras, whose 1-morphisms are finite dimensional (over $\K$) bimodules, and whose 2-morphisms are morphisms of bimodules:
\[
2\mathrm{Vect}_\K(A_0,A_1)={}_{A_1}\mathbf{Mod}_{A_0}.
\]
The composition of 1-morphisms is given by tensor products:
\[
2\mathrm{Vect}_\K(A_1,A_2)\times 2\mathrm{Vect}_\K(A_0,A_1)\xrightarrow{\circ} 2\mathrm{Vect}_\K(A_0,A_2)
\]
is given by
\begin{align*}
{}_{A_2}\mathbf{Mod}_{A_1}\times {}_{A_1}\mathbf{Mod}_{A_0}&\to {}_{A_2}\mathbf{Mod}_{A_0}\\
(M_{21},M_{10})&\mapsto M_{21}\otimes_{A_1}M_{10}.
\end{align*}
The identity 1-morphisms are algebras sees as bimodules over themselves. Tensor product of algebras over $\K$ endows $2\mathrm{Vect}_\K$ with a natural structure of symmetric monoidal 2-category, with unit object the $\K$-algebra $\K$; see, e.g.\cite{shulman-sm2c}. We will take $2\mathrm{Vect}_\K$ as our preferred model for the symmetric monoidal 2-category of finite dimensional 2-vector spaces over $\K$, hence the notation. 
\par
We will be constantly identifying a (higher) category with its nerve. {\color{black}Indeed, at least for low values of $n$, an $(\infty,n)$-category can be explicitly described as a simplicial set with a few special properties. For instance, $(\infty,0)$ categories (or, equivalently, $\infty$-groupoids) can be defined as Kan complexes (i.e., simplicial sets for which {all the  horns} are required to have a filler) and $(\infty,1)$-categories as weak Kan complexes (i.e., simplicial sets for which only the internal horns are required to have a filler). The simplicial characterization of $(\infty,2)$-categories appears to be a little less widely known than the $n=0,1$ cases; a good reference where it can be found spelled out in detail is Section 5.4 of Lurie's Kerodon \cite{lurie-kerodon}.}
In these terms $2\mathrm{Vect}_\K$ is the 2-category whose 0-simplices are algebras $A_i$, whose 1-simplices are bimodules $M_{ij}$, where $M_{ij}$ is a left $A_i$-module and a right $A_j$-module, whose 2-simplices are morphisms of bimodules
\[
\varphi_{ijk}\colon M_{ij}\otimes_{A_j}M_{jk} \to M_{ik},
\]
whose 3-simplices are commutative tetrahedra, i.e., commutative diagrams of the form
\[
\begin{tikzcd}
M_{ij}\otimes_{A_j}M_{jk}\otimes_{A_k}M_{kl}\arrow[d,"\mathrm{id}\otimes \varphi_{jkl}"']\arrow[r,"\varphi_{ijk}\otimes\mathrm{id}"]&M_{ik}\otimes_{A_k}M_{kl}\arrow[d," \varphi_{ikl}"]\\
M_{ij}\otimes_{A_j}M_{jl}\arrow[r,"\varphi_{ijl}"]&M_{il}
\end{tikzcd}.
\]
Higher simplices are {defined by the condition that the nerve of a 2-category is 3-coskeletal}, i.e., for $k\geq 4$ a $k$-simplex is in the nerve of $2\mathrm{Vect}_\K$ if and only if all of its faces are in the nerve.
\par
By $\K^\ast$ we denote the multiplicative group of the field $\K$, and by 
$\mathbf{B}^2\K^\ast$ the 2-group given by the double delooping of the abelian group $\K^\ast$, i.e., the 2-groupoid having a single object with only the identity 1-morphism and with $\K^\ast$ as automorphism group of the identity 1-morphism.
There is a canonical embedding 
$
\iota\colon \mathbf{B}^2\K^\ast \hookrightarrow 2\mathrm{Vect}_\K
$
that maps the unique object of $\mathbf{B}^2\K^\ast$ to the $\K$-algebra $\K$, the identity 1-morphism in $\mathbf{B}^2\K^\ast$ to the $(\K,\K)$-bimodule $\K$, and the elements of $\K^\ast$ to themselves seen as $(\K,\K)$-bimodule morphisms from $\K$ to $\K$. In simplicial terms,  $\mathbf{B}^2\K^\ast$ is the simplicial abelian group whose 2-simplices are decorated by elements {$\alpha$} in $\K^\ast$  and whose 3-simplices are characterized by the 2-cocycle condition
{
$
\alpha_{023}\alpha_{012}=\alpha_{013}\alpha_{123}.
$
}
The realization of $\mathbf{B}^2\K^\ast$ as subsimplicial object of $2\mathrm{Vect}_\K$ is manifest.
\par
Through the whole paper we assume familiarity with basic construtions in higher category theory. A comprehensive treatment can be found in \cite{lurie-kerodon}.

\section{The standard $\mathbf{B}\K^\ast$-action on $\mathrm{Vect}_{\K}$}\label{sec:Vect-over-BK}
The 2-group $\mathbf{B}\K^\ast$ has a distinguished action on the symmetric monoidal category 
$\mathrm{Vect}_\K$, i.e., there is a symmetric monoidal $(2,1)$-category\footnote{I.e., a 2-category whose $k$-morphisms for $k>1$ are invertible.} $\mathrm{Vect}_\K/\!/\mathbf{B}\K^\ast$ fitting into a homotopy pullback diagram of the form
\begin{equation}\label{diagr:Bk-ast-action2}
            \begin{tikzcd}
            \mathrm{Vect}_\K
                \arrow[r]\arrow[d]\arrow[dr,phantom,"\lrcorner",very near start]&\ast\arrow[d]\\
            \mathrm{Vect}_\K/\!/\mathbf{B}\K^\ast\arrow[ur,Rightarrow,shorten <=2mm, shorten >=2mm]
                \arrow[r]&\mathbf{B}^2\K^\ast
            \end{tikzcd}.
\end{equation}
The symmetric monoidal category {$ \mathrm{Vect}_\K/\!/\mathbf{B}\K^\ast$}
will be the natural target for projective representations.\footnote{The fact that this target is naturally a 2-category justifies the motto that ``The theory of projective representations is a piece of 2-category theory fallen into the realm of 1-category theory.''.}
The construction begins by noticing that the 2-category $2\mathrm{Vect}_\K$ is naturally pointed, with pointing given by the unit object $\K$. One can then form the loop space 2-category $\Omega2\Vect_\K$, 
defined as the
lax\footnote{Here and in what follows we will always say `lax' to mean the 2-cell is not necessarily invertible; when the adjective `lax' is omitted it is meant that the 2-cell is invertible. To avoid possible confusion, we will occasionally write `strong' to stress that a certain homotopy commutativity is with an invertible 2-cell.}  homotopy pullback
\begin{equation}\label{eq:omega2vect}
            \begin{tikzcd}\Omega2 \Vect_\K
                \arrow[r]\arrow[d]\arrow[dr,phantom,"\lrcorner",very near start]&\ast\arrow[d]\\
                \ast\arrow[ur,Rightarrow,shorten <=2mm, shorten >=2mm]
                \arrow[r]&2\mathrm{Vect}_\K.
            \end{tikzcd}
        \end{equation}
    { One has 
    a natural} equivalence of symmetric monoidal categories $\Omega2\Vect_\K\cong \Vect_\K$. In particular, $\Omega2\mathrm{Vect}_\K$ which is a priori a 2-category is actually equivalent to a 1-category.

\begin{definition}
The symmetric monoidal $(2,1)$-category $\mathrm{Vect}_\K/\!/\mathbf{B}\K^\ast$ is defined by  the lax homotopy pullback
\begin{equation}\label{eq:defining-vect-mod-bk}
            \begin{tikzcd}\mathrm{Vect}_\K/\!/\mathbf{B}\K^\ast
                \arrow[r]\arrow[d]\arrow[dr,phantom,"\lrcorner",very near start]&\mathbf{B}^2\K^\ast\arrow[d,"\iota"]\\
                \ast\arrow[ur,Rightarrow,shorten <=2mm, shorten >=2mm]
                \arrow[r]&2\mathrm{Vect}_\K.
            \end{tikzcd}
        \end{equation}
\end{definition}
Here, the symmetric monoidal structure comes from the fact that the natural embedding $\iota\colon\mathbf{B}\K^\ast\hookrightarrow 2\mathrm{Vect}_\K$ is symmetric monoidal, and the fact that $\mathrm{Vect}_\K/\!/\mathbf{B}\K^\ast$ is a $(2,1)$-category from the fact that all $k$-morphisms in $\mathbf{B}\K^\ast$ for $k>1$ and all $k$-morphisms in $2\mathrm{Vect}_\K$ for $k>2$ are invertible. This will be manifest in the simplicial description of  $\mathrm{Vect}_\K/\!/\mathbf{B}\K^\ast$ we provide below.   
By the pasting law for lax homotopy pullbacks, from the defining diagram \eqref{eq:defining-vect-mod-bk}  we obtain the diagram
\begin{equation}\label{diagr:vect-over-bkast}
            \begin{tikzcd}
            \mathrm{Vect}_\K
                \arrow[r]\arrow[d]\arrow[dr,phantom,"\lrcorner",very near start]&\ast\arrow[d]\\
            \mathrm{Vect}_\K/\!/\mathbf{B}\K^\ast\arrow[ur,Rightarrow,shorten <=2mm, shorten >=2mm]
                \arrow[r]\arrow[d]\arrow[dr,phantom,"\lrcorner",very near start]&\mathbf{B}^2\K^\ast\arrow[d,"\iota"]\\
                \ast\arrow[ur,Rightarrow,shorten <=2mm, shorten >=2mm]\arrow[r]&2\mathrm{Vect}_\K,
            \end{tikzcd}
        \end{equation}
where the 2-out-of-3 rule has been used to identify the top left corner with the category $\mathrm{Vect}_\K$ of vector spaces over $\K$.
In the top square\footnote{Notice that this top square is automatically a strong homotopy pullback, since $\mathbf{B}^2\K^\ast$ is a 2-groupoid.} of \eqref{diagr:vect-over-bkast} we find the announced diagram \eqref{diagr:Bk-ast-action2}, encoding a higher group action of the 2-group $\mathbf{B}\K^\ast$ on the linear category $\mathrm{Vect}_\K$. We will refer to this action as to the  
 standard 2-action of $\mathbf{B}\K^\ast$ on $\mathrm{Vect}_\K$.
 { One easily obtains} the following explicit description of the $(2,1)$-category $\mathrm{Vect}_\K/\!/\mathbf{B}\K^\ast$.
\begin{lemma}\label{lem:explicit-proj}
 The nerve of the $(2,1)$-category $\mathrm{Vect}_\K/\!/\mathbf{B}\K^\ast$ is given by the following data:
 \begin{itemize}
 \item 0-simplices are (finite dimensional) vector spaces over $\K$;
 \item 1-simplices are morphisms $\phi_{ij}\colon V_i\to V_j$ of vector spaces;
 \item 2-simplices are homotopy commutative diagrams of the form
\[
            \begin{tikzcd}
            V_i
                \arrow[rr, "\phi_{ik}"]\arrow[dr,"\phi_{ij}"']&{}&
                V_k\\
                {}&V_j\arrow[u,shorten <=3pt,shorten > =1pt,Rightarrow,"\alpha_{ijk}"',pos=.5]\ar[ur,"\phi_{jk}"']&{}\\
            \end{tikzcd},
        \]
i.e., are the datum of three morphisms of $\K$-vector spaces $\phi_{ij}\colon V_i\to V_j$ and of a scalar $\alpha_{ijk}\in \K^\ast$ such that
\[
 \phi_{ik}\cdot \alpha_{ijk} =\phi_{jk}\circ\phi_{ij};
\]
\item 3-simplices are characterized by the fact that the decorations $\alpha_{ijk}$ of the faces satisfy the 2-cocycle condition
\[
\alpha_{ikl}\alpha_{ijk}=\alpha_{ijl}\alpha_{jkl}
\]
\item higher simplices are {determined by the 3-coskeletality condition}, i.e., for $k\geq 4$ one has
\[
\Delta^k(\mathrm{Vect}_\K/\!/\mathbf{B}\K^\ast)=(\partial \Delta^k)(\mathrm{Vect}_\K/\!/\mathbf{B}\K^\ast).
\]

 \end{itemize}
\end{lemma}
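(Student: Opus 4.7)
The strategy is to unwind the defining lax homotopy pullback \eqref{eq:defining-vect-mod-bk} simplex-by-simplex, paralleling the argument of Lemma~\ref{lemma:this-is-vect}, while tracking the additional data contributed by the $\mathbf{B}^2\K^\ast$-leg. Concretely, a $k$-simplex of $\Vect_\K/\!/\mathbf{B}\K^\ast$ is a $k$-simplex of $\mathbf{B}^2\K^\ast$ together with a $(k+1)$-simplex of $2\Vect_\K$, one of whose faces is the image under $\iota$ of the chosen $k$-simplex of $\mathbf{B}^2\K^\ast$, whose opposite face (from the $\ast$-leg) is trivial, and whose remaining faces encode the boundary data of the simplex in $\Vect_\K/\!/\mathbf{B}\K^\ast$. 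Since $\mathbf{B}^2\K^\ast$ has trivial $0$- and $1$-simplices, for $k=0$ and $k=1$ the computation reduces verbatim to the one in Lemma~\ref{lemma:this-is-vect}, yielding $\K$-vector spaces $V_i$ and linear maps $\phi_{ij}\colon V_i\to V_j$ respectively.

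The genuinely new feature appears at $k=2$, where the $\mathbf{B}^2\K^\ast$-leg contributes a scalar $\alpha_{ijk}\in \K^\ast$. The filling 3-simplex in $2\Vect_\K$ is then a commutative tetrahedron of bimodules whose three ``outer'' 2-faces realize the edge maps $\phi_{ij}, \phi_{jk}, \phi_{ik}$ as morphisms of $(\K,\K)$-bimodules, after identifying $\K\otimes_\K V\cong V\cong V\otimes_\K\K$ via the unitors of $\Vect_\K$, and whose remaining 2-face is $\iota$ of the 2-simplex of $\mathbf{B}^2\K^\ast$, i.e., multiplication by $\alpha_{ijk}$. Unpacking the commutativity condition of the tetrahedron, as spelled out in the simplicial description of $2\Vect_\K$, produces precisely the single scalar relation $\phi_{ik}\cdot\alpha_{ijk}=\phi_{jk}\circ\phi_{ij}$.

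For $k=3$, the filling 4-simplex in $2\Vect_\K$ is automatic once all of its $3$-faces are already commutative tetrahedra, so it imposes no new condition beyond those already in force on each individual $2$-face; the only genuinely new input is then the structure of a 3-simplex in $\mathbf{B}^2\K^\ast$, namely the $2$-cocycle equation $\alpha_{ikl}\alpha_{ijk}=\alpha_{ijl}\alpha_{jkl}$. For $k\ge 4$, both $\mathbf{B}^2\K^\ast$ and $2\Vect_\K$ are trivial in the sense that a $k$-simplex lies in the nerve as soon as all of its faces do, and the pullback inherits the same triviality. The main obstacle is the bookkeeping at the $k=2$ step: one has to correctly align the four $2$-faces of the tetrahedron in $2\Vect_\K$ with the three edge morphisms and with $\iota(\alpha_{ijk})$, taking into account the left and right unitor identifications, in order to see that the tetrahedral commutativity collapses to the single clean equation asserted in the statement.
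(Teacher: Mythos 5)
Your proposal is correct and follows essentially the same route as the paper, which proves this lemma precisely by rerunning the simplex-by-simplex unwinding of the lax homotopy pullback from Lemma~\ref{lemma:this-is-vect} while keeping track of the extra $\alpha_{ijk}$-decorations contributed by the $\mathbf{B}^2\K^\ast$-leg. The only cosmetic difference is that you package the filler as a $(k+1)$-simplex with a degenerate face over $\ast$, whereas the paper's model in Lemma~\ref{lemma:this-is-vect} uses commutative squares and prisms; these are equivalent descriptions and lead to the same equations.
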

\begin{remark}
In terms of nerves, the morphism $\mathrm{Vect}_\K/\!/\mathbf{B}\K^\ast\to\mathbf{B}^2\K^\ast$ in diagram \eqref{diagr:Bk-ast-action2} simply consists in forgetting all 0-simplex and 1-simplex decorations, and retaining the 2-simplex decorations.
\end{remark}

\begin{remark}\label{rem:no-reference}
The content of Lemma \ref{lem:explicit-proj} can be taken as a definition of the $(2,1)$-category     $\mathrm{Vect}_\K/\!/\mathbf{B}\K^\ast$. This allows defining $\mathrm{Vect}_\K/\!/\mathbf{B}\K^\ast$ with no reference to $2\Vect_\K$. 
\end{remark}

\begin{remark}\label{rem:monoidal-structure}
The symmetric monoidal structure on  $\mathrm{Vect}_\K$ { naturally} induces a symmetric monoidal structure on $\mathrm{Vect}_\K/\!/\mathbf{B}\K^\ast$.
\end{remark}

\section{Projective representations of categories}\label{sec:proj-rep-categories}
The 2-category $\Vect_\K/\!/\mathbf{B}\K^\ast$ is the natural target of projective representations: given a category $\mathcal{C}$.\footnote{Here and in what follows by category we will usually mean $\infty$-category, and by functor we will mean $\infty$-functor, using 1-category and 1-functor when we want to stress that we are in the non higher context. Clearly, every 1-category is also an $\infty$-category and so a category in our use of the term, so expressions like ``the category of vector spaces'' will be non-ambiguous.}

\begin{definition}
    Let $\mathcal{C}$ be a category. A $\K$-linear representation of $\mathcal{C}$ is a functor $\rho\colon \mathcal{C}\to \Vect_\K$. A \emph{projective representation} of $\mathcal{C}$ (over $\K$) is a functor $\rho\colon \mathcal{C}\to \Vect_\K/\!/\mathbf{B}\K^\ast$.
A 2-cocycle on $\mathcal{C}$ with values in $\K^\ast$ is a functor $\alpha\colon \mathcal{C}\to \mathbf{B}^2\K^\ast$.    
\end{definition}
Every projective representation has an associated 2-cocycle, given by the composition of $\rho\colon \mathcal{C}\to \Vect_\K/\!/\mathbf{B}\K^\ast$ with the canonical morphism $\mathrm{Vect}_\K/\!/\mathbf{B}\K^\ast\to \mathbf{B}^2\K^\ast$. More generally, we can give the following.
\begin{definition}\label{def:proj-class-alpha}
Let $\alpha \colon \mathcal{C}\to \mathbf{B}^2\K^\ast$ be a $\K^\ast$-valued 2-cocycle on $\mathcal{C}$. A \emph{projective representation of $\mathcal{C}$ of class $\alpha$} is a lax homotopy commutative diagram of the form
\begin{equation*}
            \begin{tikzcd}
           \mathcal{C}
                \arrow[r, "\rho"]\arrow[dr,"\alpha"']&\mathrm{Vect}_\K/\!/\mathbf{B}\K^\ast\arrow[d]\arrow[ld,shorten > =45pt,Rightarrow,"\beta",pos=.1]\\
            {}&\mathbf{B}^2\K^\ast
            \end{tikzcd}.
        \end{equation*}
\end{definition}
\begin{remark}
When the filler $\beta$ is the identity, this precisely says that $\rho$ is a projective representation of $\mathcal{C}$ with associated $2$-cocycle $\alpha$. For a general $\beta$, the projective representation $
\rho$ will have a $2$-cocycle equivalent to $\alpha$ via the equivalence $\beta$. 
\end{remark}

\begin{remark}
Since $\Vect_\K/\!/\mathbf{B}\K^\ast$ is a 2-category, a projective representation of $\mathcal{C}$ will factor through the 2-truncation of $\mathcal{C}$. Hence will not be restrictive to assume $\mathcal{C}$ is a 2-category from the beginning, and we often will tacitly make this assumption, e.g., in not discussing the data for $k$-simplices of $\mathcal{C}$ for $k\geq 4$.
\end{remark}
\begin{remark}
Spelling out the definition, the functor $\rho$ of a projective representation of $\mathcal{C}$ associates with an object $X_i$ of $\mathcal{C}$ an object $V_{X_i}$ of $\mathrm{Vect}_\K/\!/\mathbf{B}\K^\ast$, i.e., a $\K$-vector space; with a 1-morphism $f_{ij}\colon X_i\to X_j$ in $\mathcal{C}$ a linear map
\[
\rho_{f_{ij}}\colon V_{X_i}\to V_{X_j}
\]
and with a 2 simplex 
\[
            \begin{tikzcd}
            X_i
                \arrow[rr, "f_{ik}"]\arrow[dr,"f_{ij}"']&{}&
                X_k\\
                {}&X_j\arrow[u,shorten <=3pt,shorten > =1pt,Rightarrow,"\Xi_{ijk}"',pos=.5]\ar[ur,"f_{jk}"']&{}\\
            \end{tikzcd},
\]
of $\mathcal{C}$ an element $\alpha_{\Xi_{ijk}}\in \K^\ast$ such that
$
\rho_{f_{jk}}\circ \rho_{f_{ij}} = \rho_{f_{ik}}\cdot \alpha_{\Xi_{ijk}}.
$
Functoriality of $\rho$ also tells us that for a a 3-simplex in $\mathcal{C}$ we have
$
\alpha_{\Xi_{ikl}}\alpha_{\Xi_{ijk}}=\alpha_{\Xi_{ijl}}\alpha_{\Xi_{jkl}},
$
that is the equation expressing the fact that $\alpha$ is a 2-cocycle on $\mathcal{C}$ with values in $\K^\ast$. 
\end{remark}

\begin{example}\label{ex:BG}
When $\mathcal{C}=\mathbf{B}G$, the above definition precisely reproduces the classical definition of a projective representation of the group $G$, and the definition of 2-cocycle reproduces that of a group 2-cocycle with values in $\K^\ast$ as a trivial $G$-module. The equivalence class of the 2-cocycle $\alpha\colon \mathbf{B}G\to \mathbf{B}^2\K^\ast$ is an element in the second group cohomology group $H^2_{\mathrm{Grp}}(G,\K^\ast)$ of $G$, and saying that a projective representation $\rho$ is of class $\alpha$ precisely means that the 2-cocycle associated with the projective representation $\rho$ is in the same cohomology class as $\alpha$. More precisely, these two 2-cocycles are related by the coboundary corresponding to the filler $\beta$ of Definition \ref{def:proj-class-alpha}. 
\end{example}

\begin{definition}
Let $\alpha\colon \mathcal{C}\to \mathbf{B}^2\K^\ast$ be a 2-cocycle. A \emph{trivialization} of $\alpha$ is a homotopy commutative diagram of the form
\[
   \begin{tikzcd}
            {}&\ast\ar[dr]&{}\\
            \mathcal{C}
                \arrow[rr, "\alpha"']\arrow[ur]&{}\arrow[u,shorten <=3pt,shorten > =3pt,Rightarrow,"\mathrm{\beta}"',pos=.5]&
                \mathbf{B}^2\K^\ast
            \end{tikzcd}.
\]
\end{definition}
\begin{example}
When $\mathcal{C}=\mathbf{B}G$, a trivialization in the above sense is precisely the datum of a $\K^\ast$-valued 1-cochain on $G$ with $d_{\mathrm{Grp}}\beta=\alpha$, where $d_{\mathrm{Grp}}$ is the differential of the usual cochain complex computing group cohomology.    
\end{example}

\begin{lemma}
 Let $\rho\colon \mathcal{C}\to \Vect_\K/\!/\mathbf{B}\K^\ast$ be a projective representation, and let $\alpha$ be its associated 2-cocycle. Then a trivialization of $\alpha$ is equivalent to a lift of $\rho$ to a linear representation $\hat{\rho}\colon \mathcal{C}\to \Vect_\K$.  
\end{lemma}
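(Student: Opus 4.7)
The plan is to invoke the universal property of the homotopy pullback exhibited by the top square of diagram \eqref{diagr:vect-over-bkast}, which presents $\Vect_\K$ as the homotopy fiber of $\Vect_\K/\!/\mathbf{B}\K^\ast \to \mathbf{B}^2\K^\ast$ over the basepoint. Applying this universal property internally to $\Fun(\mathcal{C},-)$, a functor $\hat{\rho}\colon \mathcal{C}\to \Vect_\K$ is the same datum as a triple consisting of a functor $\mathcal{C}\to \Vect_\K/\!/\mathbf{B}\K^\ast$, the essentially unique functor $\mathcal{C}\to \ast$, and a 2-cell filling the induced square in $\Fun(\mathcal{C},\mathbf{B}^2\K^\ast)$. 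Lifts of $\rho$ correspond to those triples whose first component is $\rho$ itself, and the two composites $\mathcal{C}\rightrightarrows \mathbf{B}^2\K^\ast$ to be compared are, by construction, the associated 2-cocycle $\alpha$ and the constant functor at the basepoint. A 2-cell between these is by definition a trivialization of $\alpha$, so the claimed equivalence falls out of the pasting law for homotopy pullbacks.

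To confirm the abstract argument, I would unravel both sides explicitly at the simplicial level using Lemma \ref{lem:explicit-proj} and Remark \ref{rem:trivialization-beta}. Starting from a trivialization $\{\beta_{f_{ij}}\}$ of $\alpha$, I would define $\hat{\rho}$ to agree with $\rho$ on objects and set $\hat{\rho}_{f_{ij}}:=\beta_{f_{ij}}^{-1}\,\rho_{f_{ij}}$ on 1-morphisms; combining the projective cocycle identity $\rho_{f_{jk}}\circ \rho_{f_{ij}}=\alpha_{\Xi_{ijk}}\cdot \rho_{f_{ik}}$ with \eqref{eq:triv-a-b} then yields the strict equality $\hat{\rho}_{f_{jk}}\circ \hat{\rho}_{f_{ij}}=\hat{\rho}_{f_{ik}}$, so that $\hat{\rho}$ is a bona fide linear representation. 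Conversely, given a lift $\hat{\rho}$, the 2-cells witnessing that its image in $\Vect_\K/\!/\mathbf{B}\K^\ast$ is naturally isomorphic to $\rho$ are, object-wise, parallel to the identity, hence (by the explicit description of 2-simplices in Lemma \ref{lem:explicit-proj}) encoded by scalars $\beta_{f_{ij}}\in \K^\ast$; naturality applied to each 2-simplex of $\mathcal{C}$ reproduces \eqref{eq:triv-a-b}, showing that $\beta$ is a trivialization of $\alpha$.

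The conceptual content is mild, being essentially the pasting law applied in $\Fun(\mathcal{C},-)$; the only real obstacle is bookkeeping, namely keeping the direction of the 2-cells and the placement of $\beta$ versus $\beta^{-1}$ consistent with the chosen orientation of the lax fillers in \eqref{eq:defining-vect-mod-bk} so that the two constructions are mutually inverse.
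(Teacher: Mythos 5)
Your argument is correct and follows essentially the same route as the paper: the paper likewise deduces the equivalence from the universal property of the lax homotopy pullback in the top square of \eqref{diagr:vect-over-bkast} (equivalently, the pasting law), and records the explicit formula $\hat{\rho}_{f_{ij}}=\beta_{f_{ij}}^{-1}\rho_{f_{ij}}$ together with the verification via \eqref{eq:triv-a-b} in the remark immediately following the proof. Your sign/orientation bookkeeping also matches the paper's conventions, so nothing is missing.
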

\begin{proof}
    Since $\alpha$ is the 2-cocycle associated with $\rho$, a trivialization of $\alpha$ is a homotopy commutative diagram of the form
    \begin{equation}\label{eq:to-be-factored}
    \begin{tikzcd}
            \mathcal{C}
                \arrow[r]\arrow[d,"\rho"']&\ast\arrow[d]\\
            \mathrm{Vect}_\K/\!/\mathbf{B}\K^\ast\arrow[ur,Rightarrow,shorten <=2mm, shorten >=2mm, "\beta"']
                \arrow[r]&\mathbf{B}^2\K^\ast
            \end{tikzcd}.
    \end{equation}
    By the universal property of lax homotopy pullback and by the top square in \eqref{diagr:vect-over-bkast}, diagram \eqref{eq:to-be-factored} uniquely\footnote{As always in the context of higher categories, uniqueness is up to homotopies, that are unique up to 2-homotopies, that are unique up to 3-homotopies, et cetera.} factors as
\begin{equation}\label{eq:need-outer}
            \begin{tikzcd}
            \mathcal{C}\ar[dr,"\hat{\rho}"]\arrow[drr,bend left=20]\arrow[ddr,bend right=20,"\rho"']&{}&{}\\
            {}&\mathrm{Vect}_\K\arrow[ru,shorten > =37pt,Rightarrow]
                \arrow[r]\arrow[d]\arrow[dr,phantom,"\lrcorner",very near start]
                &\ast\arrow[d]\\
                {}\arrow[ru,shorten < =43pt,Rightarrow,"\hat{\beta}",,pos=.87]&\mathrm{Vect}_\K/\!/\mathbf{B}\K^\ast\arrow[ur,Rightarrow,shorten <=2mm, shorten >=2mm]\arrow[r]&\mathbf{B}^2\K^\ast
            \end{tikzcd},
        \end{equation}   
where the filler in the bottom right square has no name since it is the canonical filler for the lax homotopy pullback and the filler in the top triangle has no name since $\ast$ is the terminal 2-category and so there is a unique\footnote{Again, uniqueness is up to homotopy: one means that the space of all such triangles is contractible.} triangle as the top one. Vice versa, given a lift $\hat{\rho}$ of $\rho$ to a linear representation, i.e., a lax homotopy commutative diagram of the form     \[
            \begin{tikzcd}
            \mathcal{C}\arrow[r,"\hat{\rho}"]\arrow[dr,"\rho"']& \mathrm{Vect}_\K\arrow[d]\\
            {}\arrow[ru,shorten < =43pt,Rightarrow,"\hat{\beta}",,pos=.87]&\mathrm{Vect}_\K/\!/\mathbf{B}\K^\ast
            \end{tikzcd},
        \]  
we can uniquely form a diagram of the form \eqref{eq:need-outer} (again by the uniqueness of the upper right triangle in \eqref{eq:need-outer}), and the outer diagram of this is a diagram of the form  \eqref{eq:to-be-factored}, i.e., a trivialization of $\alpha$.       
\end{proof}
\begin{remark}
Spelling out the definition, we see that a trivialization of $\alpha$ consists in associating  with any morphism $f_{ij}\colon X_i\to X_j$ in $\mathcal{C}$ an element $\beta_{f_{ij}}$ in $\mathbb{K}^*$ in such a way that for any  2-simplex 
\[
            \begin{tikzcd}
            X_i
                \arrow[rr, "f_{ik}"]\arrow[dr,"f_{ij}"']&{}&
                X_k\\
                {}&X_j\arrow[u,shorten <=3pt,shorten > =1pt,Rightarrow,"\Xi_{ijk}"',pos=.5]\ar[ur,"f_{jk}"']&{}\\
            \end{tikzcd},
\]
in $\mathcal{C}$  the equations
 $\beta_{f_{ik}}\alpha_{\Xi_{ijk}} =\beta_{jk}\beta_{ij}$
are satisfied.
 The lift $\hat{\rho}$ associated with the trivialization $\beta$ is 
    $
    \hat{\rho}_{f_{ij}}=\beta_{f_{ij}}^{-1}\rho_{f_{ij}}.
    $
    One then easily directly checks that $\hat{\rho}$ is a linear representation of $\mathcal{C}$.
    \end{remark}
\begin{remark}\label{rem:tensor-prod-proj-rep}
The monoidal structure on  $\mathrm{Vect}_\K/\!/\mathbf{B}\K^\ast$ from Remark \ref{rem:monoidal-structure} induces a tensor product on projective representations. One immediately sees that if $\rho_1$ and $\rho_2$ are projective representations of $\mathcal{C}$ of classes $\alpha_1$ and $\alpha_2$, respectively, then $\rho_1\otimes \rho_2$ is a projective representation of $\mathcal{C}$ of class $\alpha_1 \alpha_2$. It follows that if two projective representations $\rho$ and $\eta$ of $\mathcal{C}$ are such that their associated 2-cocycles are inverse to each other then $\rho\otimes \eta$ lifts to a linear representation of $\mathcal{C}$. 
\end{remark}
\begin{remark}\label{rem:proj-1-dim}
Let $\rho$ be a projective representation of $\mathcal{C}$ with associated 2-cocycle $\alpha$. Assume $\alpha$ has a trivialization $\beta$. Then from $\beta$ we can construct a projective representation $\eta_\beta$ as follows
\begin{align*}
\eta_\beta\colon X_i &\mapsto \K\\
f_{ij}&\mapsto \beta_{f_{ij}}^{-1}\\
\Xi_{ijk}&\mapsto \beta_{f_{ij}}^{-1}\beta_{f_{jk}}^{-1}\beta_{f_{ik}}.
\end{align*}
We can then form the tensor product $\eta_\beta\otimes \rho$.
Since $\beta$ is a trivialization of $\alpha$, 
the 2-cocycle for the projective representation  $\eta_\beta$ is $\alpha_{\Xi_{ijk}}^{-1}$. So, by Remark \ref{rem:tensor-prod-proj-rep}, $\eta_\beta\otimes \rho$ is a linear representation of $\mathcal{C}$. This is no surprise: making $\eta_\beta\otimes \rho$ explicit we find
\begin{align*}
\eta_\beta\otimes \rho \colon X_i &\mapsto \K\otimes V_{X_i}=V_{X_i}\\
f_{ij}&\mapsto \beta_{f_{ij}}^{-1}\rho_{f_{ij}}=\hat{\rho}_{f_{ij}}\\
\Xi_{ijk}&\mapsto \beta_{f_{ij}}^{-1}\beta_{f_{jk}}^{-1}\beta_{f_{ik}}\alpha_{Xi_{ijk}}=1.
\end{align*}
That is, $\eta_\beta\otimes \rho$ is precisely the linear lift $\hat{\rho}$ of $\rho$ associated with the trivialization $\beta$. This shows that linear lifts of projective representations can be seen as particular instances of tensor products of projective representations. 
\end{remark}

\section{Anomalous representations of categories}\label{sec:anomalousC}
A closely related notion to that of a projective representation of $\mathcal{C}$ with $2$-cocycle $\alpha$ is that of \emph{anomalous representation} with an anomaly $J$ endowed with an $\alpha$-structure $\xi$. 
\begin{definition}
Let $J\colon \mathcal{C}\to 2\Vect_\K$ be a functor. An \emph{anomalous representation} of $\mathcal{C}$ with anomaly $J$  is a lax 
homotopy commutative diagram of the form
\begin{equation}\label{diagram-bottom2}
            \begin{tikzcd}
            \mathcal{C}
                \arrow[rr, "J"]\arrow[dr]&{}&
                2\mathrm{Vect}_\K\\
                {}&\ast\arrow[u,shorten <=3pt,shorten > =1pt,Rightarrow,"Z"',pos=.5]\ar[ur]&{}\\
            \end{tikzcd}.
        \end{equation}
\end{definition}
\begin{remark}\label{rem:Z-explicit}
 It is useful to make fully explicit the definition of an anomalous representation $Z$ of $\mathcal{C}$ with anomaly $J$. { This explicitation will serve as a model for making fully explicit all of the constructions presented by universal properties in the following of the present article. On several occasions we will directly use these explicit descriptions in the proofs of a few results. The interested reader can easily derive these explicit descriptions from the one presently given here, or find them in \cite{vuppulury}.  
  An anomalous representation $Z$ of $\mathcal{C}$ with anomaly $J$} consists of
 \begin{itemize}
   \item a \emph{left} $J(X_i)$-module $Z(X_i)$, thought of as a $(J(X_i),\K)$-bimodule, for every object (0-simplex) $X_i$ of $\mathcal{C}$;  
   \item a filler $Z(f_{ij})$ for the diagram
\[
            \begin{tikzcd}
            \K
                \arrow[r,"\K"]\arrow[d,"Z(X_i)"']&\K\arrow[d,"Z(X_j)"]\\
           J(X_i)\arrow[ur,Rightarrow,shorten <=2mm, shorten >=2mm,"Z(f_{ij})"]
                \arrow[r,"J(f_{ij})"']& J(X_j)
            \end{tikzcd}
\]
in $2\Vect_\K$, i.e., a morphism of left $J(X_j)$-modules 
\[
Z(f_{ij})\colon J(f_{ij})\otimes_{J(X_i)} Z(X_i)\to Z(X_j),
\]
for any morphism $f_{ij}\colon X_i\to X_j$ in $\mathcal{C}$;
\end{itemize}
such that the prism
\[\begin{tikzcd}
	\K &&& {\text{ }} & \K \\
	& {\text{ }} & \K && {\text{ }} \\
	{\text{ }} & {\text{ }} && {\text{ }} \\
	J(X_i) && {\text{ }} && J(X_k) \\
	&& J(X_j) & {\text{ }} \\
	& {\text{ }}
	\arrow["\K", from=1-1, to=1-5]
	\arrow[""{name=0, anchor=center, inner sep=0}, "\K"', from=1-1, to=2-3]
	\arrow["Z(X_i)"', from=1-1, to=4-1]
	\arrow["Z(X_k)", from=1-5, to=4-5]
	\arrow["\K"', from=2-3, to=1-5]
	\arrow["Z(X_j)"',from=2-3, to=5-3]
	\arrow["Z(f_{ij})"', shift left=4, shorten <=17pt, shorten >=11pt, Rightarrow, from=4-1, to=2-3]
	\arrow[no head, from=4-1, to=4-3]
	\arrow[""{name=1, anchor=center, inner sep=0}, "J(f_{ij})"', from=4-1, to=5-3]
	\arrow["Z(f_{jk})", shift left=3, shorten <=17pt, shorten >=11pt, Rightarrow, from=4-3, to=2-5]
	\arrow[""{name=2, anchor=center, inner sep=0}, "J(f_{ik})", from=4-3, to=4-5]
	\arrow["J(f_{jk})"', from=5-3, to=4-5]
	\arrow["1"', shift right, shorten <=17pt, shorten >=11pt, Rightarrow, from=0, to=1-4]
	\arrow["J(\Xi_{ijk})"{pos=0.3}, shorten <=6pt, shorten >=47pt, Rightarrow, from=1, to=2]
\end{tikzcd}\]
commutes in $2\Vect_\K$, i.e., such that the diagram of morphisms of left $J(X_k)$-modules
\begin{equation}\label{eq:diagram:Z-J}
\begin{tikzcd}[column sep=large]
            J(f_{jk})\otimes_{J(X_j)}J(f_{ij})\otimes Z(X_i) \arrow[d,"J(\Xi_{ijk})\otimes{\mathrm{id}}"']\arrow[r,"\mathrm{id}\otimes Z(f_{ij})"]&J(f_{jk})\otimes_{J(X_j)}Z(X_j)
            \arrow[d,"Z(f_{jk})"]\\
            J(f_{ik})\otimes_{J(X_i)} Z(X_i)\arrow[r,"Z(f_{ik})"]&Z(X_k)
            \end{tikzcd}
\end{equation}
commutes, for any  2-simplex 
\[
 \begin{tikzcd}
            & {X_j} \arrow[dr,"{f_{jk}}"]&\\
            {X_i}\arrow[ru,"{f_{ij}}"]\arrow[rr,"{f_{ik}}"']&\arrow[u,Leftarrow,shorten <=2mm, shorten >=2mm,"\Xi_{ijk}"']{}&{X_k}
            \end{tikzcd}.
\]
of $\mathcal{C}$
\end{remark}

We now introduce the notion of invertible anomalies and of $\alpha$-structures on a given anomaly, where $\alpha$ is a 2-cocycle. The interplay between these two notions will lead to Proposition \ref{prop:proj-vs-anomalous}, relating anomalous and projective representations.

\begin{definition}
The Picard 3-group $\Pic(2\Vect_\K)\subseteq 2\Vect_\K$ is the 3-group of invertible algebras, invertible bimodules and invertible morphism of bimodules inside the Morita 2-category. We say that an anomaly $J\colon \mathcal{C}\to 2\Vect_\K$ is \emph{invertible} if it factors through $\Pic(2\Vect_\K)\subseteq 2\Vect_\K$, i.e., if one is given the datum of a strong\footnote{i.e., the 2-cell components are all invertible. Another terminology to express this consists in saying that the 2-cell filler is a ``pseudonatural transformation'', as opposed to ``lax natural transformation''.} homotopy commutative diagram of the form
\begin{equation}\label{diagram-top2a}
            \begin{tikzcd}
            {}&\Pic(2\Vect_K)\ar[dr,"\iota"]&{}\\
            \mathcal{C}
                \arrow[rr, "J"']\arrow[ur]&{}\arrow[u,shorten <=3pt,shorten > =3pt,Rightarrow,pos=.5]&
                2\mathrm{Vect}_\K
            \end{tikzcd},
        \end{equation}
 where $\iota\colon \Pic(2\Vect_\K)\to 2\Vect_K$ is the inclusion.       
\end{definition}
\begin{definition}
Let $J\colon \mathcal{C}\to 2\Vect_\K$ be a functor, and let $\alpha\colon \mathcal{C}\to \mathbf{B}^2\K^\ast$ be a 2-cocycle on $\mathcal{C}$ with values in $\K^\ast$. An \emph{$\alpha$-structure} on $J$ is the datum of a {\color{black}lax} homotopy commutative diagram of the form
\begin{equation}\label{diagram-top2}
            \begin{tikzcd}
            {}&\mathbf{B}^2\K^\ast\ar[dr,"\iota"]&{}\\
            \mathcal{C}
                \arrow[rr, "J"']\arrow[ur,"\alpha"]&{}\arrow[u,shorten <=3pt,shorten > =3pt,Rightarrow,"\mathrm{\xi}"',pos=.5]&
                2\mathrm{Vect}_\K
            \end{tikzcd}.
        \end{equation}
We say that an $\alpha$ structure is invertible if  \eqref{diagram-top2} is strong.    
\end{definition}

\begin{remark}\label{rem:inclusions}
By composing a 2-cocycle $\alpha\colon \mathcal{C}\to \mathbf{B}^2\K^\ast$ with the inclusion $\iota\colon \mathbf{B}^2\K^\ast\hookrightarrow 2\Vect_\K$ one gets a functor
\[
J_\alpha=\iota\circ\alpha\colon \mathcal{C}\to 2\Vect_\K
\]
The anomaly $J_\alpha$ is endowed with a canonical invertible $\alpha$-structure: the one with identity filler $\xi$. We will call $J_\alpha$, together with its canonical $\alpha$-structure, the canonical anomaly associated with the 2-cocycle $\alpha$. Since the inclusion $\mathbf{B}^2\K^\ast\hookrightarrow 2\Vect_\K$ factors as
  \[
  \mathbf{B}^2\K^\ast\hookrightarrow\mathbf{B}\Pic(\Vect_\K)\hookrightarrow \Pic(2\Vect_\K)\hookrightarrow 2\Vect_\K.
  \]
 we see that an anomaly $J$ endowed with an invertible $\alpha$-structure is automatically invertible. In particular, $J_\alpha$ is an invertible anomaly.  
\end{remark}

\begin{proposition}\label{prop:proj-vs-anomalous}
Let $\alpha\colon \mathcal{C}\to \mathbf{B}^2\K^\ast$ be a 2-cocycle on $\mathcal{C}$ with values in $\K^\ast$. 
Then we have an equivalence {between the 
category of projective representations of $\mathcal{C}$ of class $\alpha$ and the category
of anomalous representations of $\mathcal{C}$ with anomaly $J_\alpha$.
More generally, if $\xi$ is an \emph{invertible} $\alpha$-structure on $J\colon \mathcal{C}\to 2\Vect_\K$, then $\xi$  induces an equivalence between the 
category of  projective representations of $\mathcal{C}$ of class $\alpha$ and the category of anomalous representations of $\mathcal{C}$ with anomaly $J$.}
\end{proposition}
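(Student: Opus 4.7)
The plan is to obtain the first equivalence directly from the universal property of the lax homotopy pullback defining $\Vect_\K/\!/\mathbf{B}\K^\ast$, and then to deduce the more general statement by pasting with the invertible $\alpha$-structure.

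For the first equivalence, I would unfold a projective representation of class $\alpha$ as the datum of a functor $\rho\colon \mathcal{C}\to \Vect_\K/\!/\mathbf{B}\K^\ast$ together with a $2$-cell $\beta$ comparing its associated $2$-cocycle $\alpha':=\pi\circ\rho$ (where $\pi\colon \Vect_\K/\!/\mathbf{B}\K^\ast\to\mathbf{B}^2\K^\ast$ is the canonical map) with $\alpha$. By the universal property of the defining lax pullback \eqref{eq:defining-vect-mod-bk}, the functor $\rho$ alone corresponds to the datum of $\alpha'$ together with a lax triangle $\ast\Rightarrow \iota\circ\alpha'$, i.e., an anomalous representation of $\mathcal{C}$ with anomaly $J_{\alpha'}$. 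Since $\mathbf{B}^2\K^\ast$ is a $2$-groupoid, $\beta$ is automatically invertible, and whiskering it with $\iota$ produces an invertible pseudonatural transformation $\iota\beta\colon J_{\alpha'}\Rightarrow J_\alpha$. Pasting with $\iota\beta$ converts the anomalous representation with anomaly $J_{\alpha'}$ into one with anomaly $J_\alpha$, and invertibility of $\iota\beta$ makes this assignment an equivalence.

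For the general statement, I would observe that an invertible $\alpha$-structure $\xi$ on $J$ is precisely a pseudonatural transformation $J\Rightarrow \iota\circ\alpha=J_\alpha$. Given an anomalous representation $Z\colon \ast\Rightarrow J$, pasting with $\xi$ produces a new lax triangle $\xi\cdot Z\colon \ast\Rightarrow J_\alpha$, i.e., an anomalous representation with anomaly $J_\alpha$, whose objectwise value is $\xi(X_i)\otimes_{J(X_i)}Z(X_i)$. Pasting with the pseudo-inverse $\xi^{-1}$ provides the inverse assignment (up to canonical coherence $2$-cells), so this is an equivalence of anomalous representations. Composing with the first equivalence yields the claimed bijection; notice that when $J=J_\alpha$ and $\xi$ is the identity $\alpha$-structure, this recovers the first part of the statement tautologically.

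The main obstacle I expect is not conceptual but bookkeeping. One has to check that the composite of the prism identity \eqref{eq:diagram:Z-J} for $Z$ with the identity \eqref{eq:diagram:xi-J-alpha} for $\xi$ really yields the prism identity for $\xi\cdot Z$ viewed as an anomalous representation with anomaly $J_\alpha$, correctly producing the scalar $\alpha_{\Xi_{ijk}}$ as the $2$-cocycle component of the pasted representation. Concretely, one tensors \eqref{eq:diagram:Z-J} on the left with $\xi(X_k)$ over $J(X_k)$ and uses \eqref{eq:diagram:xi-J-alpha} to replace the occurrences of $J(\Xi_{ijk})$; the resulting diagram commutes, and the cocycle scalar lands exactly where the definition of $J_\alpha$ prescribes. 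The calculation is routine once one carefully tracks left/right module structures and the placement of the scalar.
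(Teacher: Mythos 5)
Your proposal is correct and follows essentially the same route as the paper: both arguments rest on the universal property of the lax homotopy pullback defining $\Vect_\K/\!/\mathbf{B}\K^\ast$ and on pasting with the invertible $2$-cell $\xi$ (resp.\ $\xi^{-1}$) to pass between the anomalies $J$ and $J_\alpha$. The only difference is organizational — you establish the $J_\alpha$ case first and then transport along $\xi$, whereas the paper pastes $\xi\circ Z$ into the square and factors through the pullback in one step — and the bookkeeping check you flag at the end is exactly the hexagon verification the paper carries out in Remark \ref{rem:rho-Z-explicit}.
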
        
\begin{proof}
By pasting \eqref{diagram-top2} with \eqref{diagram-bottom2}, we get the lax homotopy commutative diagram
\begin{equation}\label{diagram:induces-proj2}
            \begin{tikzcd}
            \mathcal{C}
                \arrow[r,"\alpha"]\arrow[d]&\mathbf{B}^2\K^\ast\arrow[d,"\iota"]\\
            \ast\arrow[ur,Rightarrow,shorten <=2mm, shorten >=2mm,"\xi\circ Z"']
                \arrow[r]&2\mathrm{Vect}_\K
            \end{tikzcd}.
        \end{equation}

        By the universal property of lax pullbacks, this diagram can uniquely be factored as
\begin{equation*}
            \begin{tikzcd}
            \mathcal{C}\ar[dr,"\rho^{}_{\xi\circ Z}"]\arrow[drr,bend left=20,"\alpha"]\arrow[ddr,bend right=20]&{}&{}\\
            {}&\mathrm{Vect}_\K/\!/\mathbf{B}\K^\ast\arrow[ru,shorten > =37pt,Rightarrow,"\beta_{\xi\circ Z}",pos=.2]
                \arrow[r]\arrow[d]\arrow[dr,phantom,"\lrcorner",very near start]
                &\mathbf{B}^2\K^\ast\arrow[d,"\iota"]\\
                {}\arrow[ru,shorten < =43pt,Rightarrow]&\ast\arrow[ur,Rightarrow,shorten <=2mm, shorten >=2mm]\arrow[r]&2\mathrm{Vect}_\K
            \end{tikzcd}.
        \end{equation*}        
From the top triangle in the above diagram, comparing with Definition \ref{def:proj-class-alpha}, one sees that 
\[
\rho_{\xi\circ Z}\colon \mathcal{C}\to \mathrm{Vect}_\K/\!/\mathbf{B}\K^\ast
\]
is a projective representation of $\mathcal{C}$ of class $\alpha$. Vice versa, since $\ast$ is the terminal $2$-category, given a projective representation of $\mathcal{C}$ of class $\alpha$ we can uniquely form the diagram 
\begin{equation*}
            \begin{tikzcd}
            \mathcal{C}\ar[dr,"\rho"]\arrow[drr,bend left=20,"\alpha"]\arrow[ddr,bend right=20]&{}&{}\\
            {}&\mathrm{Vect}_\K/\!/\mathbf{B}\K^\ast\arrow[ru,shorten > =37pt,Rightarrow,"\beta",pos=.2]
                \arrow[r]\arrow[d]\arrow[dr,phantom,"\lrcorner",very near start]
                &\mathbf{B}^2\K^\ast\arrow[d,"\iota"]\\
                {}\arrow[ru,shorten < =43pt,Rightarrow]&\ast\arrow[ur,Rightarrow,shorten <=2mm, shorten >=2mm]\arrow[r]&2\mathrm{Vect}_\K
            \end{tikzcd}.
        \end{equation*}
whose outer square is a lax homotopy commutative diagram of the form
\begin{equation}\label{eq:for-use-now}
            \begin{tikzcd}
            \mathcal{C}
                \arrow[r,"\alpha"]\arrow[d]&\mathbf{B}^2\K^\ast\arrow[d,"\iota"]\\
            \ast\arrow[ur,Rightarrow,shorten <=2mm, shorten >=2mm,"Z_{\rho,\beta}",pos=.6]
                \arrow[r]&2\mathrm{Vect}_\K
            \end{tikzcd},
        \end{equation}
Since the 2-cell $\xi$ is invertible, we can write         
 $Z_{\rho,\beta}=\xi \circ (\xi^{-1}\circ Z_{\rho,\beta})$, and \eqref{eq:for-use-now} is factored as  
 \begin{equation*}
            \begin{tikzcd}[column sep=large]
            {}&\mathbf{B}^2\K^\ast\ar[dr,"\iota"]&{}\\
            \mathcal{C}\arrow[ru,"\alpha"]
                \arrow[rr, "J"]\arrow[dr]&{}\arrow[u,shorten <=10pt,shorten > =3pt,Rightarrow,"\xi"',pos=.6]&
                2\mathrm{Vect}_\K\\
                {}&\ast \arrow[u,shorten <=3pt,shorten > =2pt,Rightarrow,"\, \xi^{-1}\circ Z_{\rho,\beta}"',pos=.8]
                \ar[ur]&{}\\
            \end{tikzcd},
        \end{equation*}
thus exhibiting $\xi^{-1}\circ Z_{\rho,\beta}$ as an anomalous representation of $\mathcal{C}$ with anomaly $J$. By uniqueness, the two constructions are inverse to each other, so that when the $\alpha$-structure $\xi$ on $J$ is invertible we find an equivalence. 
\end{proof}

\begin{remark}\label{rem:no-invertibility-needed}
The first part of the proof of Proposition \ref{prop:proj-vs-anomalous} only uses the datum of the $\alpha$-structure on $J$ and not its invertibility. So we still have { that
anomalous representations of $\mathcal{C}$ with anomaly $J$ induce
projective representations of $\mathcal{C}$ of class $\alpha$}
under the sole assumption $\xi$ is an $\alpha$-structure on $J$. However, this will not be an equivalence unless $\xi$ is invertible.
\end{remark}
\begin{remark}\label{rem:rho-Z-explicit}
{ For an object $X_i$ in $\mathcal{C}$, we have
 $
 \rho_{\xi\circ Z}(X_i)=\xi(X_i)\otimes_{J(X_i)}Z(X_i)
 $.
  For a 1-morphism $f_{ij}\colon X_i\to X_j$ in $\mathcal{C}$ we have that $ \rho_{\xi\circ Z}(f_{ij})$ is the 
 linear map 
\[
\xi(X_i)\otimes_{J(X_i)}Z(X_i)\xrightarrow{\xi(f_{ij})\otimes \mathrm{id}_{Z(X_i)} } \xi(X_j)\otimes_{J(X_j)}J(f_{ij})\otimes_{J(X_i)}Z(X_i)\xrightarrow{\mathrm{id}_{\xi(X_j)}\otimes Z(f_{ij})}\xi(X_j)\otimes_{J(X_j)}Z(X_j).
\]
}
\end{remark}
\begin{remark}
It is reasonable to expect that the constructions of the previous sections make sense with an arbitrary 2-term $\Omega$ sequence of symmetric monoidal $\infty$-categories, i.e., for any sequence $((n+1)\mathcal{V},n\mathcal{V},(n-1)\mathcal{V})$ where
\begin{itemize}
    \item $k\mathcal{V}$ is a symmetric monoidal $(\infty,k)$-category;
    \item $(k-1)\mathcal{V}\cong \Omega (k\mathcal{V})$.
\end{itemize}
Writing simply $\mathcal{V}$ for $1\mathcal{V}$, the sequence we have been considering in the previous sections has been 
\begin{align*}
0\mathcal{V}&=0\Vect_\K:\text{ the commutative algebra $\K$ of ``numbers'', that can be thought of as }\\
&\phantom{=0\Vect_\K:\quad}\text{``0-vector spaces'' }\\
\mathcal{V}&=\Vect_\K:\text{the symmetric monoidal category of $\K$-vector spaces}\\
2\mathcal{V}&=2\Vect_\K\text{the symmetric monoidal Morita 2-category of ``2-vector spaces''}
\end{align*}
A second immediate example is the super-version of this $\Omega$-triple:
\begin{align*}
0\mathcal{V}&=0\mathrm{sVect}_\K: \text{ the commutative algebra $\K$ of ``numbers'', that can be thought of as}\\
&\phantom{=0\mathrm{sVect}_\K\quad}\text{ ``super (i.e., $\mathbb{Z}/2\mathbb{Z}$-graded) 0-vector spaces'' }\\
\mathcal{V}&=\mathrm{sVect}_\K: \text{the symmetric monoidal category $\mathrm{sVect}_\K$ of super  $\K$-vector spaces}\\
2\mathcal{V}&=2\mathrm{sVect}_\K: \text{the symmetric monoidal Morita 2-category of ``super 2-vector spaces''.}
\end{align*}
Here, more explicitly, $2\mathrm{sVect}_\K$ is the  2-category whose objects are (finite dimensional) super (i.e., $\mathbb{Z}/2\mathbb{Z}$-graded) $\K$-algebras, whose 1-morphism are (finite dimensional) $\mathbb{Z}/2\mathbb{Z}$-graded bimodules and whose 2-morphisms are morphisms of $\mathbb{Z}/2\mathbb{Z}$-graded bimodules. As in the non-super case, composition of 1-morphisms is given by tensor products, which in this case is the tensor product of $\mathbb{Z}/2\mathbb{Z}$-graded bimodules.
\par
Other examples are given by
the $\Omega$-triple
\begin{align*}
0\mathcal{V}&=0\mathrm{Hilb}_\K: \text{ the commutative algebra $\mathbb{C}$ of complex numbers, thought of as}\\
&\phantom{=0\mathrm{Hilb}_\K: \quad }\text{``0-Hilbert spaces'' }\\
\mathcal{V}&=\mathrm{Hilb}_\K: \text{the symmetric monoidal category $\mathrm{Hilb}$ of Hilbert spaces}\\
2\mathcal{V}&=2\mathrm{Hilb}: \text{the symmetric monoidal Morita 2-category of `Hilbert 2-vector spaces'',}
\end{align*}
modelled as the 2-category of von Neumann algebras, Hilbert bimodules and continuous intertwiners, as well by its super version, see \cite{schmidpeter} for details.

\par
In the general setting, the chain of inclusions appearing in  Remark \ref{rem:inclusions} becomes
\[
\mathbf{B}^2\Pic((n-1)\mathcal{V})\hookrightarrow \mathbf{B}\Pic(n\mathcal{V})\hookrightarrow \Pic((n+1)\mathcal{V})\hookrightarrow (n+1)\mathcal{V}
\]
and all of the constructions of the previous sections should verbatim translate to this more general setting. In particular the target with projective $n\mathcal{V}$-valued representations would be the $(\infty,n-1)$-category 
\[
n\mathcal{V}/\!/\mathbf{B}\Pic((n-1) \mathcal{V}).
\]
 A somehow more explicit notation for the same $(\infty,n-1)$-category is 
\[
n\mathcal{V}/\!/\mathbf{B}\mathrm{Aut}_{n\mathcal{V}}(\mathbf{1}_{n\mathcal{V}}),
\]
where one stresses the fact that $\Pic((n-1)\mathcal{V})$ is the group of automorphisms of the unit object of the $(\infty,n)$-symmetric monoidal category $n\mathcal{V}$. Also notice that the possibility of multiplying a morphism of vector spaces with an invertible scalar, that played a prominent role into giving explicit equations for projective representations with values in $\Vect_\K$ makes perfect sense for any $(\infty,n)$- symmetric monoidal category $n\mathcal{V}$: the multiplication 
\[
\K^\ast\times \mathrm{Hom}_{\Vect_\K}(V_i,V_j)\to \mathrm{Hom}_{\Vect_\K}(V_i,V_j)
\]
is a particular instance of the multiplication
\[
\mathrm{Aut}_{n\mathcal{V}}(\mathbf{1}_{n\mathcal{V}})\times \mathrm{Hom}_{n\mathcal{V}}(V_i,V_j)\to \mathrm{Hom}_{\color{black}{n\mathcal{V}}}(V_i,V_j)
\]
given by
\begin{align*}
\mathrm{Aut}_{n\mathcal{V}}(\mathbf{1}_{n\mathcal{V}})\times \mathrm{Hom}_{n\mathcal{V}}(V_i,V_j)&\xrightarrow{\otimes} 
\mathrm{Hom}_{n\mathcal{V}}(\mathbf{1}_{n\mathcal{V}}\otimes V_i,\mathbf{1}_{n\mathcal{V}} \otimes V_j) \cong \mathrm{Hom}_{n\mathcal{V}}(V_i,V_j)\\
(\alpha,f)&\mapsto \alpha\otimes f
\end{align*}
Also notice that invertibility actually plays no role here: one could have $\K$ and $\mathrm{End}_{n\mathcal{V}}(\mathbf{1}_{\mathcal{V}})$ in place of $\K^\ast$ and  $\mathrm{Aut}_{n\mathcal{V}}(\mathbf{1}_{n\mathcal{V}})$ in the above formulas.

One could therefore have given all definitions and constructions from the beginning in the general setup considered in this Remark. Yet, a close inspection of the relevant literature showed that many of the expected results these definitions and constructions would be based on in the generality of symmetric monoidal $(\infty,n)$-categories are{, for $n\geq 3$,} more part of a well-established folklore than being available in the form of rigorously established results. So we preferred to limit ourselves to the treatment of the concrete example of $\K$-vector spaces, so to have the constructions presented in the paper based on solid ground. The construction immediately extend to the other explicit examples mentioned above, i.e., super-2-vector spaces and  (super-)2-Hilbert spaces; the rigorous extension to arbitrary symmetric monoidal $(\infty,n)$-categories is the subject of \cite{vuppulury2}.
\par
Focusing on $2\Vect_\K$ also has the benefit not to lose the reader into abstractness from the very beginning, and to have a more immediate recognition  of known results and construction from the classical theory of projective representation of groups. At the same time, we tried to present all constructions in a sufficient abstract way, using only categorical properties of $\Vect_\K$, to make the transition to the general setting of of symmetric monoidal $(\infty,n)$-categories immediate once the needed basic results on $\infty$-group actions on  symmetric monoidal $(\infty,n)$-categories are rigorously established. 
\end{remark}

\section{From anomalous representations to linear representations}

Let $G$ be a finite group, and let $\alpha$ be a $\K^\ast$-valued 2-cocycle on $G$. Then one can use $\alpha$ to construct a $\K^\ast$-central extension $G^\alpha$ of $G$ by the rule: elements of $G^\alpha$ are pairs $(g,\lambda)$ with $g\in G$ and $\lambda\in \K^\ast$ and with the multiplication given by
\[
(g_1,\lambda_1)\cdot (g_2,\lambda_2)=(g_1g_2,\alpha(g_1,g_2)\lambda_1\lambda_2).
\]
In categorical terms, the construction of $G^\alpha$ is a homotopy pullback: the groupoid $\mathbf{B}G^\alpha$ is the homotopy pullback
\begin{equation}\label{diagr:def-G-alpha}
            \begin{tikzcd}
            {\mathbf{B}G^\alpha}
                \arrow[r]\arrow[d]\arrow[dr,phantom,"\lrcorner",very near start]&\mathbf{B}G\arrow[d,"\alpha"]\arrow[dl,Rightarrow,shorten <=2mm, shorten >=2mm]\\
           \ast
                \arrow[r]&\mathbf{B}^2\K^\ast
            \end{tikzcd}.
 \end{equation}
 \begin{remark}
 Notice that we are saying ``homotopy pullback'' here instead of ``lax homotopy pullback'' since the category in the bottom right corner is a 2-groupoid, and so the filler of the 2-cell in \eqref{diagr:def-G-alpha} is automatically invertible. 
 \end{remark}
A well know result from the theory of projective representations of group is that projective representations of $G$ with 2-cocycle $\alpha$ are equivalent to linear representations of the central extension $G^\alpha$. Actually, stated this way, the result is improperly stated. To see this just consider the case where $G=\{e\}$ is the trivial group. Then also $\alpha$ is trivial and $G^\alpha=\K^\ast$. The statement would then say that projective representations of the trivial group, that are necessarily trivial, are equivalent to linear representations of the group $\K^\ast$, that are not necessarily trivial (already the 1-dimensional representation of $\K^\ast$ acting on $\K$ as scalars is nontrivial). The correct statement is: projective representations of $G$ with 2-cocycle $\alpha$ are equivalent to linear representations of the central extension $G^\alpha$ such that the subgroup $\K^\ast\subseteq G^\alpha$ acts as scalars. In this and in the following Section we are going to present a generalization of this result to the case where $\mathbf{B}G$ is replaced by an arbitrary category and $\alpha$ by an arbitrary anomaly. As in the previous Sections, we will present the constructions with concrete linear targets such as $\Vect_\K$ and $2\Vect_\K$, but in such a way that a generalization to an arbitrary symmetric monoidal 2-category $\mathcal{V}$
replacing $2\Vect_\K$ should be immediate. There will be however also a few specificities of $2\Vect_\K$, in particular the fact that every algebra is a module over itself, that will play a role. 

\subsection{From anomalous representations to linear representations}\label{sec:anomalous-to-linearC}

We begin by defining the main character of this Section, {namely the category $\mathcal{C}^J$}, by generalizing diagram \eqref{diagr:def-G-alpha}. {This amounts to performing a Grothendieck construction, so that the properties of  $\mathcal{C}^J$ are an instance of the straightening/unstraightening yoga in higher category theory, see \cite{htt}.
}
\begin{definition}
Let $\mathcal{C}$ be a category and $J\colon\mathcal{C}\to 2\Vect_\K$ be a functor. The \emph{extension of $\mathcal{C}$ with anomaly $J$} is the category $\mathcal{C}^J$ defined by the   lax homotopy pullback
\begin{equation}\label{diagr:def-CJ}
            \begin{tikzcd}
            {\mathcal{C}^{J}}
                \arrow[r]\arrow[d]\arrow[dr,phantom,"\lrcorner",very near start]&\mathcal{C}\arrow[d,"J"]\arrow[dl,Rightarrow,shorten <=2mm, shorten >=2mm]\\
           \ast
                \arrow[r]&2\mathrm{Vect}_\K
            \end{tikzcd}.
        \end{equation}
\end{definition}  
\begin{remark}\label{rem:unwind-cj}
{Objects of $\mathcal{C}^J$ are pairs $(X_i,L_{X_i})$, where $X$ is an object in $\mathcal{C}$ and  $L_{X_i}$ is a \emph{right} $J(X_i)$-module, and morphisms in $\mathcal{C}^J$ are pairs $(f_{ij},\varphi_{f_{ij}})$, where $f_{ij}\colon X_i\to X_j$ is a morphism in $\mathcal{C}$ and $\varphi_{f_{ij}}$ is 
a morphism of right $J(X_i)$-modules $\varphi_{f_{ij}}\colon L_{X_i} \to L_{X_j}\otimes_{J(X_j)}J(f_{ij})$. 
Finally, with any 2-morphism  $\Xi_{ijk}$ in $\mathcal{C}$ it is associated a 
commutative diagram of morphisms of right $J(X_i)$-modules
\begin{equation}\label{eq:diagram-L-J}
\begin{tikzcd}[column sep=large]
            L_{X_i} \arrow[d,"{\varphi_{f_{ij}}}"']\arrow[r,"\varphi_{f_{ik}}"]&L_{X_k}\otimes_{J(X_k)}J(f_{ik})
            \\
            L_{X_j}\otimes_{J(X_j)}J(f_{ij})\arrow[r,"{\varphi_{f_{jk}}\otimes\mathrm{id}}"]&L_{X_k}\otimes_{J(X_k)}J(f_{jk})\otimes_{J(X_j)}J(f_{ij})\arrow[u,"\mathrm{id}\otimes J(\Xi_{ijk})"']
            \end{tikzcd}.
\end{equation}
}
\end{remark}

\begin{remark}
If $\mathcal{C}$ is a symmetric monoidal category and $J$ is a symmetric monoidal functor, then $\mathcal{C}^J$ is naturally endowed with a symmetric monoidal structure as well.    
\end{remark}
\begin{remark}\label{rem:towards-EW}
Let $X$ be an object of $\mathcal{C}$. Then we have an embedding
\[
\iota_X\colon \mathrm{Mod}_{J(X)}\to \mathcal{C}^J
\]
given by $L\mapsto (X,L)$ at the level of objects. At the level of 1-morphism, $\iota_X$ is
\[
\iota_X(L_i\xrightarrow{\varphi_{ij}}L_j) = (X,L_i)\xrightarrow{(\mathrm{id}_X,\varphi_{ij})}(X,L_j).
\]
Here one is using the fact that $J(\mathrm{id}_X)=J(X)$ as a $(J(X),J(X))$-bimodule, so that $\varphi_{ij}$ is a morphism from $L_i$ to $L_j\otimes_{J(X)}J(\mathrm{id}_X)$. Finally, at the level of 2-simplices the functor $\iota_X$ acts as
\[
\iota_X\left(
\begin{tikzcd}
            & {L_j} \arrow[dr,"{\varphi_{jk}}"]&\\
            {L_i}\arrow[ru,"{\varphi_{ij}}"]\arrow[rr,"{\varphi_{ik}}"']&{}&{L_k}
            \end{tikzcd}\right)=
 \begin{tikzcd}
            & {(X,L_{j})} \arrow[dr,"{(\mathrm{id}_X,\varphi_{{jk}})}"]&\\
            {(X_i,L_{X_i})}\arrow[ru,"{(\mathrm{id}_X,\varphi_{{ij}})}"]\arrow[rr,"{(\mathrm{id}_X,\varphi_{{ik}})}"']&\arrow[u,Leftarrow,shorten <=2mm, shorten >=2mm,"\mathrm{id}"']{}&{(X,L_{k})}
            \end{tikzcd}.
\]
If  $E\colon \mathcal{C}^J\to \Vect_\K$ is a functor, then we can restrict it along $\iota_X$, i.e., consider  the functor
$E\bigr\vert_X=E\circ \iota_X\colon \mathrm{Mod}_{J(X)}\to \Vect_\K$.
This way, once $E$ is fixed, every object $X$ of $\mathcal{C}$ defines a functor from right $J(X)$-modules to $\K$-vector spaces.
\end{remark}

{\begin{definition}\label{def:towards-EW}
Let $E\colon \mathcal{C}^J\to \Vect_\K$ be a functor. We say that $E$ is \emph{additive and cocontinuous over} $\mathcal{C}$ if all of the functors  $E\bigr\vert_X\colon \mathrm{Mod}_{J(X)}\to \Vect_\K$ from Remark \ref{rem:towards-EW} are additive and cocontinuous (i.e., preserving small colimits). 
\end{definition}
}

\begin{remark}
 The functor $\iota_X$ from Remark \ref{rem:towards-EW} identifies the lax fiber of $\mathcal{C}^J\to \mathcal{C}$ over the object $X$ with a copy of the category $\mathrm{Mod}_{J(X)}$ of right $J(X)$-modules. This identification can be canonically seen from the pasting law for lax homotopy pullbacks: {\color{black} in the diagram}
\begin{equation*}
            \begin{tikzcd}
                {\mathrm{Mod}_{J(X)}}\arrow[r]\arrow[d]\arrow[dr,phantom,"\lrcorner",very near start]& \mathcal{C}^{J}\arrow[r]\arrow[d]\arrow[dr,phantom,"\lrcorner",very near start]&\ast\arrow[d]\\
                \ast\arrow[ur,Rightarrow,shorten <=2mm, shorten >=2mm]\arrow[r,"X"']& \mathcal{C}\arrow[ur,Rightarrow,shorten <=2mm, shorten >=2mm]\arrow[r,"{J}"']&2\mathrm{Vect}_\K.
            \end{tikzcd}
        \end{equation*}
        both the rightmost square (by definition of $\mathcal{C}^{J}$) and the outer square are  lax homotopy pullbacks; by the 2-out-of-3 property also the leftmost square is a  lax homotopy pullback.
\end{remark}

\begin{remark}\label{rem:tensored-over-vect}
The category $\mathcal{C}^J$ is naturally tensored over $\Vect_\K$: the $\Vect_\K$-action on $\mathcal{C}^J$ is given by
$(V,(X_i,L_{X_i}))\mapsto (X_i, V\otimes_\K L_{X_i})$.
\end{remark}

By Remark \ref{rem:tensored-over-vect}, both $\mathcal{C}^J$ and $\mathrm{Vect}_\K$ are $\mathrm{Vect}_\K$-linear categories, so one can look at $\mathrm{Vect}_\K$-linear functors $E$ from $\mathcal{C}^J$ to $\mathrm{Vect}_\K$. Putting this together with Definition \ref{def:towards-EW}, we give the following definition, clearly hinting at the Eilenberg--Watts theorem \cite{eilenberg,watts}.
\begin{definition}\label{def:Vect-linear-cocontinuous}
Let $\mathcal{C}$ be a category and $J\colon\mathcal{C}\to 2\Vect_\K$ be a functor. 
We write $\mathrm{Hom}_{\mathrm{Vect}_\K}(\mathcal{C}^{J},\mathrm{Vect}_\K)$ for the category of $\mathrm{Vect}_\K$-linear functors that are additive and cocontinuous  over $\mathcal{C}$. 
\end{definition}
Clearly $\mathrm{Hom}_{\mathrm{Vect}_\K}(\mathcal{C}^{J},\mathrm{Vect}_\K)\subseteq \mathrm{Hom}(\mathcal{C}^{J}, \mathrm{Vect}_\K)$, that is, every element of \allowbreak $\mathrm{Hom}_{\mathrm{Vect}_\K}(\mathcal{C}^{J}, \allowbreak\mathrm{Vect}_\K)$ is in particular a linear representation of $\mathcal{C}^J$. The $\Vect_\K$-linearity makes the elements of $\mathrm{Hom}_{\mathrm{Vect}_\K}(\mathcal{C}^{J},\allowbreak \mathrm{Vect}_\K)$ special among the linear representations of $\mathcal{C}^J$; as we are going to see in detail in the subsequent section, this condition is akin to the ``$\K^\ast$ acting as scalars'' condition for the linear representations of $G^\alpha$ corresponding to projective representations of $G$ we mentioned at the beginning of this Section.
\begin{proposition}\label{prop:anom-to-lin}
 There is a distinguished natural map {$Z\mapsto E_Z$, from
 anomalous representations of $\mathcal{C}$ with anomaly $J$ to $\mathrm{Hom}_{\mathrm{Vect}_\K}(\mathcal{C}^{J},\mathrm{Vect}_\K)$,}
defined as follows in terms of the notation of Remark \ref{rem:Z-explicit}. 
\begin{itemize}
\item
For an object $(X_i,L_{X_i})$ of $\mathcal{C}^J$ one has
\begin{equation}\label{eq:EZ2-obj}
E_Z(X_i,L_{X_i})= L_{X_i}\otimes_{J(X_i)} Z(X_i),
\end{equation}
\item For a 1-morphism $(f_{ij},\varphi_{f_{ij}})$ in $\mathcal{C}^J$, the morphism $E_Z(f_{ij},\varphi_{f_{ij}})$ in $\Vect_\K$ is the composition
\begin{equation}\label{eq:EZ2-morph}
L_{X_i}\otimes_{J(X_i)} Z(X_i) \xrightarrow{\varphi_{f_{ij}}\otimes \mathrm{id}_{Z(X_i)}}
L_{X_j}\otimes_{J(X_j)}J(f_{ij})\otimes_{J(X_i)} Z(X_i)\xrightarrow{\mathrm{id}_{L_{X_j}}\otimes Z(f_{ij})} L_{X_j}\otimes_{J(X_j)}Z(X_j)
\end{equation}
\end{itemize}
\end{proposition}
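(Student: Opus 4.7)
The plan is to check three things in order: (i) that $E_Z$ as defined on $0$- and $1$-simplices extends to a functor, which reduces to functoriality on $2$-simplices because higher simplices of the nerve of $\mathrm{Vect}_\K$ are determined by their faces; (ii) $\mathrm{Vect}_\K$-linearity with respect to the action of Remark \ref{rem:tensored-over-vect}; (iii) additivity and cocontinuity over $\mathcal{C}$ in the sense of Definitions \ref{def:towards-EW} and \ref{def:Vect-linear-cocontinuous}. Naturality of $Z \mapsto E_Z$ will then be formal from the explicit formulas \eqref{eq:EZ2-obj} and \eqref{eq:EZ2-morph}.

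For (i), given a $2$-simplex of $\mathcal{C}^J$ projecting to a $2$-simplex $\Xi_{ijk}$ of $\mathcal{C}$, I would verify
\[
E_Z(f_{jk},\varphi_{f_{jk}}) \circ E_Z(f_{ij},\varphi_{f_{ij}}) = E_Z(f_{ik},\varphi_{f_{ik}})
\]
by contemplating exactly the same hexagonal diagram displayed at the end of Remark \ref{rem:rho-Z-explicit}, with the $\xi$-data there replaced by the $L$-data and with the scalar $\alpha_{\Xi_{ijk}}$ suppressed, since by definition of $\mathcal{C}^J$ the fillers $\varphi_{f_{ij}}$ sit in strictly (rather than laxly) commuting squares. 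The inner cells of this hexagon commute by three well-identified reasons: the $\varphi$-compatibility \eqref{eq:diagram-L-J} on the $L$-side, the $Z$-compatibility \eqref{eq:diagram:Z-J} on the $Z$-side, and the functoriality/interchange of the relative tensor product in the middle. The two occurrences of $J(\Xi_{ijk})$ coming from the $L$-side and the $Z$-side cancel, yielding the claim.

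For (ii), $\mathrm{Vect}_\K$-linearity on objects is the associativity isomorphism
\[
(V \otimes_\K L_{X_i}) \otimes_{J(X_i)} Z(X_i) \cong V \otimes_\K (L_{X_i} \otimes_{J(X_i)} Z(X_i)),
\]
and on morphisms it follows because both tensor factors appearing in \eqref{eq:EZ2-morph} are $\K$-linear and natural in $L_{X_i}$. For (iii), by Remark \ref{rem:towards-EW} the restriction $E_Z\vert_X$ along $\iota_X$ is the functor $\mathrm{Mod}_{J(X)} \to \mathrm{Vect}_\K$ sending $L$ to $L \otimes_{J(X)} Z(X)$ and $\varphi$ to $\varphi \otimes \mathrm{id}_{Z(X)}$; this is the left adjoint to $\mathrm{Hom}_\K(Z(X),-)$ equipped with the left $J(X)$-action induced by $Z(X)$, so it is additive and preserves all small colimits.

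The main obstacle is step (i): each individual inner cell of the hexagon is standard, but fitting them together so that the two $J(\Xi_{ijk})$ contributions cancel requires care with the orderings of the tensor factors and with the direction in which $\varphi$ and $Z$ fillers point (right-module vs.\ left-module). Once this diagram chase is done cleanly, (ii), (iii), and naturality in $Z$ are formal consequences of the bilinearity and associativity of the relative tensor product.
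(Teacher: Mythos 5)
Your proposal is correct, but it takes the explicit, hands-on route rather than the one the paper uses for the proof proper. The paper constructs $E_Z$ abstractly: it pastes the anomalous representation $Z$ (diagram \eqref{diagram-bottom2}) onto the defining lax pullback \eqref{diagr:def-CJ} of $\mathcal{C}^J$, obtaining a lax square over $2\mathrm{Vect}_\K$ with both legs through $\ast$, and then invokes the universal property of the lax homotopy pullback together with Lemma \ref{lemma:this-is-vect} ($\Omega 2\mathrm{Vect}_\K\simeq \mathrm{Vect}_\K$) to factor this uniquely through $\mathrm{Vect}_\K$; functoriality on $2$-simplices is then automatic and never checked by hand, and the proof only unwinds the construction far enough to confirm the formulas \eqref{eq:EZ2-obj}--\eqref{eq:EZ2-morph} and to read off $E_Z\vert_{X_i}=-\otimes_{J(X_i)}Z(X_i)$ for additivity and cocontinuity. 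Your step (i) — the hexagon whose inner cells commute by \eqref{eq:diagram-L-J}, \eqref{eq:diagram:Z-J}, and functoriality of the relative tensor product, with no scalar appearing because the $2$-simplices of $\mathcal{C}^J$ commute strictly — is exactly the verification the paper carries out in the remark immediately following the proposition, where it notes that a reader not wishing to use lax homotopy pullbacks may take \eqref{eq:EZ2-obj}--\eqref{eq:EZ2-morph} as the definition and check $2$-simplices directly. So both arguments appear in the paper; the abstract one buys you homotopy-coherent uniqueness and naturality for free (which matters later when comparing the two routes to projective representations), while your concrete one is self-contained and makes the cancellation of the two $J(\Xi_{ijk})$ contributions visible. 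Your points (ii) and (iii) match the paper's treatment, with the tensor--hom adjunction argument for cocontinuity being a slightly more explicit justification than the paper's one-line observation.
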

\begin{proof}
By pasting diagram \eqref{diagram-bottom2} to diagram \eqref{diagr:def-CJ} we get a lax homotopy commutative diagram of the form
\begin{equation}\label{diagr:Z-hat2}
            \begin{tikzcd}
            \mathcal{C}^{J}
                \arrow[r]\arrow[d]&\ast\arrow[d]\arrow[dl,Rightarrow,shorten <=2mm, shorten >=2mm,"\hat{Z}"]\\
            \ast
                \arrow[r]&2\mathrm{Vect}_\K
            \end{tikzcd}.
        \end{equation}
By the universal property of lax homotopy pullbacks, 
this uniquely\footnote{As usual and already remarked, uniqueness is in a homotopical sense: it means that the space of these factorizations is contractible.} factors as     
\begin{equation*}
            \begin{tikzcd}
            \mathcal{C}^{J}\ar[dr,"E_Z"]\arrow[drr,bend left=20]\arrow[ddr,bend right=20]&{}&{}\arrow[ld,shorten < =37pt,Rightarrow]\\
            {}&\mathrm{Vect}_\K
                \arrow[r]\arrow[d]\arrow[dr,phantom,"\lrcorner",very near start]
                \arrow[ld,shorten > =37pt,Rightarrow]&\ast\arrow[d]\arrow[dl,Rightarrow,shorten <=2mm, shorten >=2mm]\\
                {}&\ast\arrow[r]&2\mathrm{Vect}_\K
            \end{tikzcd},
        \end{equation*}
giving a morphism $
E_Z\colon \mathcal{C}^{J}\to \mathrm{Vect}_\K$.
We conclude the proof by showing that on objects and 1-morphisms in $\mathcal{C}^J$ the functor $E_Z$ has the announced behavior. From this, the $\mathrm{Vect}_\K$-linearity of $E_Z$ is manifest.
Let $(X_i,L_{X_i})$ be an object in $\mathcal{C}^{J}$. As emphasized in Remark \ref{rem:Z-explicit}, the 2-cell $Z$ from diagram \eqref{diagram-bottom2} associates with $X_i$ the left $J(X_i)$-module $Z(X_i)$, seen as a $(J(X_i),\K)$-bimodule. Since diagram \eqref{diagr:Z-hat2} is the pasting of diagram \eqref{diagram-bottom2} with diagram \eqref{diagr:def-CJ}, the 2-cell $\hat{Z}$ from diagram \eqref{diagr:Z-hat2} associates with $(X_i,L_{X_i})$ the composition of 1-morphisms
$
\K\xrightarrow{Z(X_i)}J(X_i)\xrightarrow{L_{X_i}}\K
$
in $2\Vect_\K$, i.e., the $\K$-vector space
$
L_{X_{i}}\otimes_{J(X_i)}Z(X_i),
$
seen as a $(\K,\K)$-bimodule. The description of $E_Z$ on 1-morphisms in $\mathcal{C}^J$ goes along the very same lines. Finally, for every object $X_i$ of $\mathcal{C}$, we have
$
E_Z\bigr\vert_{X_i}=-\otimes_{J(X_i)}Z(X_i),
$
so $E_Z\bigr\vert_{X_i}$ is both additive and cocontinuous.
\end{proof}

\begin{remark}
In the proof of Proposition \ref{prop:anom-to-lin} we didn't explicitly check that $E_Z$ maps 2-simplices in $\mathcal{C}^J$ to commutative diagrams of $\K$-vector spaces. Since $E_Z$ is constructed from universal properties, this is automatically satisfied. Yet, it is a simple check to verify that the association $E_Z$ explicitly defined by equations \eqref{eq:EZ2-obj}--\eqref{eq:EZ2-morph} does indeed map 2-simplices in $\mathcal{C}^J$ to commutative diagrams of $\K$-vector spaces, thus defining a functor $\mathcal{C}^J\to \Vect_\K$. That is, a reader not wishing to dwell into higher categorical structures may define $E_Z$ directly by \eqref{eq:EZ2-obj}--\eqref{eq:EZ2-morph}, without relying on the notion and properties of lax homotopy pullbacks. To see that \eqref{eq:EZ2-obj}--\eqref{eq:EZ2-morph} indeed define a functor, let 
\[
 \begin{tikzcd}
            & {(X_j,L_{X_j})} \arrow[dr,"{(f_{jk},\varphi_{f_{jk}})}"]&\\
            {(X_i,L_{X_i})}\arrow[ru,"{(f_{ij},\varphi_{f_{ij}})}"]\arrow[rr,"{(f_{ik},\varphi_{f_{ik}})}"']&\arrow[u,Leftarrow,shorten <=2mm, shorten >=2mm,"\Xi_{ijk}"']{}&{(X_k,L_{X_k})}
            \end{tikzcd}
\]
be a 2-simplex in $\mathcal{C}^J$. By \eqref{eq:EZ2-obj}--\eqref{eq:EZ2-morph}, the diagram of $\K$-vector spaces
\[
\begin{tikzcd}
            & {E_Z(X_j,L_{X_j})} \arrow[dr,"{E_Z(f_{jk},\varphi_{f_{jk}})}"]&\\
            {E_Z(X_i,L_{X_i})}\arrow[ru,"{E_Z(f_{ij},\varphi_{f_{ij}})}"]\arrow[rr,"{{\color{black}{E_Z}}(f_{ik},\varphi_{f_{ik}})}"']&{}&{E_Z(X_k,L_{X_k})}
            \end{tikzcd}
\]
is the diagram
\[
\begin{tikzcd}[column sep=-3.5em]
            & L_{X_{j}}\otimes_{J(X_j)}Z(X_j) \arrow[dr,"\varphi_{f_{jk}}\otimes \mathrm{id}_{Z(X_j)}"]&\\
            L_{X_j}\otimes_{J(X_j)}J(f_{ij})\otimes_{J(X_i)} Z(X_i)\arrow[ru,"\mathrm{id}_{L_{X_j}}\otimes Z(f_{ij})"]\arrow[dr,"\varphi_{f_{jk}}\otimes\mathrm{id}_{J(f_{ij})}\otimes \mathrm{id}_{Z(X_i)}"']&{}&L_{X_k}\otimes_{J(X_k)}J(f_{jk})\otimes_{J(X_j)} Z(X_j)\arrow[dd,"\mathrm{id}_{L_{X_k}}\otimes Z(f_{jk})"]\\
            & L_{X_{k}}\otimes_{J(X_k)}J(f_{jk})\otimes_{J(X_j)}J(f_{ij})\otimes_{J(X_i)}Z(X_i)\arrow[dd,"\mathrm{id}_{L_{X_k}}\otimes J(\Xi_{ijk})\otimes \mathrm{id}_{Z(X_i)}"] \arrow[ur,"\mathrm{id}_{L_{X_k}}\otimes \mathrm{id}_{J(f_{jk})}\otimes Z(f_{jk})"]&\\
            L_{X_{i}}\otimes_{J(X_i)}Z(X_i)\arrow[uu,"\varphi_{f_{ij}}\otimes \mathrm{id}_{Z(X_i)}"]\arrow[rd,"\varphi_{f_{ik}}\otimes \mathrm{id}_{Z(X_i)}"']&{}&L_{X_{k}}\otimes_{J(X_k)}Z(X_k)\\
            &L_{X_k}\otimes_{J(X_k)}J(f_{ik})\otimes_{J(X_i)} Z(X_i)\arrow[ur,"\mathrm{id}_{L_{X_k}}\otimes Z(f_{ik})"']
            \end{tikzcd}
\]
and this commutes: the commutativity of the lower left square is diagram \eqref{eq:diagram-L-J}, the commutativity of the lower right square is diagram \eqref{eq:diagram:Z-J}, and the commutativity of the top square is the functoriality of tensor product.
\end{remark}

\subsection{From linear representations to projective representations}\label{sec:linear-to projective2}
In the previous Section we have shown how an anomalous representation of $\mathcal{C}$ with anomaly $J$ induces a linear representation of $\mathcal{C}^J$. We now show how, given an $\alpha$-structure on $J$, a linear representation of 
$\mathcal{C}^J$ induces a projective representation of $\mathcal{C}$ of class $\alpha$. Putting these two constructions together we get a projective representation of $\mathcal{C}$ starting with an anomalous one endowed with an $\alpha$-structure. We have already described in Proposition \ref{prop:proj-vs-anomalous} another way of producing projective representations out of anomalous ones. No surprise, as we are going to show, these two constructions of projective representations out of anomalous ones are equivalent. 
This ultimately relies on the fact that, since the groupoid $\mathbf{B}\K^\ast$ is naturally a submonoidal category of $\Vect_\K$, a $\Vect_\K$-linear functor out of $\mathcal{C}^J$ is naturally $\mathbf{B}\K^\ast$-equivariant.

Along the same line {we used in Section \ref{sec:Vect-over-BK} we produced the 2-category $\Vect_\K/\!/\mathbf{B}^2\K^\ast$} we can define the 2-category ${\mathcal{C}^{J}/\!/\mathbf{B}\K^\ast}$ by factoring the defining diagram \eqref{diagr:def-CJ} of $\mathcal{C}^J$
as
\begin{equation}\label{diagr:def-CJoverKast}
            \begin{tikzcd}
            {\mathcal{C}^{J}}
                \arrow[r]\arrow[d]\arrow[dr,phantom,"\lrcorner",very near start]&\ast\arrow[d]\\
                {\mathcal{C}^{J}/\!/\mathbf{B}\K^\ast}\arrow[ur,Rightarrow,shorten <=2mm, shorten >=2mm]
                \arrow[r]\arrow[d]\arrow[dr,phantom,"\lrcorner",very near start]&\mathbf{B}^2\K^\ast\arrow[d,"\iota"]\\
            \mathcal{C}\arrow[ur,Rightarrow,shorten <=2mm, shorten >=2mm]
                \arrow[r,"J"']&2\mathrm{Vect}_\K
            \end{tikzcd}.
\end{equation}

\begin{lemma}\label{lemma:equivariant}
    Let $E\colon \mathcal{C}^J\to \Vect_\K$ be a $\Vect_\K$-linear functor. Then $E$ is $\mathbf{B}\K^\ast$-equivariant, i.e., it is part of a lax homotopy commutative diagram of the form
    \begin{equation}\label{diagr:withEZ2}
            \begin{tikzcd}
            {\mathcal{C}^{J}}
                \arrow[rr,"E"]\arrow[d,"\pi"']&{}&{\mathrm{Vect}_\K}\arrow[d]\\
                {\mathcal{C}^{J}/\!/\mathbf{B}\K^\ast}\arrow[urr,Rightarrow,shorten <=2mm, shorten >=2mm]
                \arrow[rr,"E/\!/\mathbf{B}\K^\ast"]\arrow[dr]&{}&\mathrm{Vect}_\K/\!/\mathbf{B}\K^\ast\arrow[dl]\\
            {}\arrow[urr,Rightarrow,shorten < =57pt]&\mathbf{B}^2\K^\ast&{}
            \end{tikzcd}.
            \end{equation}
\end{lemma}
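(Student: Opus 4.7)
The plan is to construct the functor $E/\!/\mathbf{B}\K^\ast$ explicitly from the simplicial descriptions of $\mathcal{C}^J/\!/\mathbf{B}\K^\ast$ and $\Vect_\K/\!/\mathbf{B}\K^\ast$, then verify that the resulting diagram \eqref{diagr:withEZ2} can be filled by identity 2-cells. On 0- and 1-simplices, $\mathcal{C}^J/\!/\mathbf{B}\K^\ast$ agrees with $\mathcal{C}^J$, and $\Vect_\K/\!/\mathbf{B}\K^\ast$ contains $\Vect_\K$ via the canonical inclusion from diagram \eqref{diagr:Bk-ast-action2}; I would set $E/\!/\mathbf{B}\K^\ast$ on these simplices to be the composition of $E$ with that inclusion. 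On a 2-simplex with data $(\Xi_{ijk}, \{(f_{\bullet\bullet},\varphi_{f_{\bullet\bullet}})\}, \alpha_{ijk})$, I would send it to the 2-simplex of $\Vect_\K/\!/\mathbf{B}\K^\ast$ whose sides are $E(f_{\bullet\bullet},\varphi_{f_{\bullet\bullet}})$ and whose decoration is the same scalar $\alpha_{ijk}$.

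The essential verification is that this target triple really is a 2-simplex of $\Vect_\K/\!/\mathbf{B}\K^\ast$, that is, that
\[
E(f_{jk},\varphi_{f_{jk}})\circ E(f_{ij},\varphi_{f_{ij}}) = \alpha_{ijk}\cdot E(f_{ik},\varphi_{f_{ik}}).
\]
The idea is to reinterpret the defining relation \eqref{eq:diagram-L-J-mod-K} of a 2-simplex in $\mathcal{C}^J/\!/\mathbf{B}\K^\ast$: it becomes \emph{exactly} the relation \eqref{eq:diagram-L-J} for a genuine 2-simplex of $\mathcal{C}^J$ once one replaces the third 1-simplex $(f_{ik},\varphi_{f_{ik}})$ by $(f_{ik},\alpha_{ijk}\cdot\varphi_{f_{ik}})$. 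Applying the functor $E$ to this modified 2-simplex of $\mathcal{C}^J$ yields a commutative triangle in $\Vect_\K$; the $\Vect_\K$-linearity of $E$ then extracts the scalar as $E(f_{ik},\alpha_{ijk}\cdot\varphi_{f_{ik}}) = \alpha_{ijk}\cdot E(f_{ik},\varphi_{f_{ik}})$, yielding the required identity. For $k\geq 3$, the $k$-simplices of both targets are determined by their boundaries together with the cocycle condition on scalar decorations; since $E/\!/\mathbf{B}\K^\ast$ transports $\alpha$-decorations unchanged, no additional verification is needed at these levels.

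Commutativity of the whole diagram \eqref{diagr:withEZ2} is then essentially automatic, with identity 2-cell fillers. The top square commutes strictly because the inclusion $\pi\colon\mathcal{C}^J\hookrightarrow\mathcal{C}^J/\!/\mathbf{B}\K^\ast$ hits exactly the 2-simplices with $\alpha_{ijk}=1$, which are sent by $E/\!/\mathbf{B}\K^\ast$ to 2-simplices with $\alpha_{ijk}=1$, matching the image of $\Vect_\K\hookrightarrow\Vect_\K/\!/\mathbf{B}\K^\ast$. The bottom triangle commutes strictly because both legs to $\mathbf{B}^2\K^\ast$ are the projection onto the $\alpha$-decoration of 2-simplices, and $E/\!/\mathbf{B}\K^\ast$ preserves that decoration. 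The main obstacle is the 2-simplex reinterpretation trick: one must notice that a scalar twist of the third 1-simplex by $\alpha_{ijk}$ converts the $\mathcal{C}^J/\!/\mathbf{B}\K^\ast$-relation into a genuine $\mathcal{C}^J$-relation, so that $\Vect_\K$-linearity of $E$ becomes the bridge between the ``projective'' and ``linear'' sides; once this observation is in place, everything else is formal. One could alternatively argue via the universal property of $\Vect_\K/\!/\mathbf{B}\K^\ast$ as a lax homotopy pullback, using the projection $\mathcal{C}^J/\!/\mathbf{B}\K^\ast\to\mathbf{B}^2\K^\ast$ and the 2-cell built from $E$, but the verification of the 2-cell's well-definedness reduces to exactly the same scalar-extraction step.
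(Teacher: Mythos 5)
Your proposal is correct and follows essentially the same route as the paper's proof: define $E/\!/\mathbf{B}\K^\ast$ to agree with $E$ on 0- and 1-simplices and to carry the scalar decoration $\alpha_{ijk}$ unchanged on 2-simplices, then verify the required relation by observing that $(\Xi_{ijk},\varphi_{f_{ij}},\varphi_{f_{jk}},\varphi_{f_{ik}}\cdot\alpha_{ijk})$ is a genuine 2-simplex of $\mathcal{C}^J$ and extracting the scalar via $\Vect_\K$-linearity of $E$. This is exactly the paper's argument, including the observation that the diagram then commutes with identity fillers.
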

\begin{proof}
We have to define the functor
$E/\!/\mathbf{B}\K^\ast\colon \mathcal{C}^J/\!/\mathbf{B}\K^\ast \to \Vect_\K/\!/\mathbf{B}\K^\ast$.
Since 0- and 1-simplices in $\mathcal{C}^J/\!/\mathbf{B}\K^\ast$ and in $\Vect_\K/\!/\mathbf{B}\K^\ast$ are the same as 0- and 1-simplices in $\mathcal{C}^J$ and in $\Vect_\K$, respectively,
we define $E/\!/\mathbf{B}\K^\ast\colon \mathcal{C}^J/\!/\mathbf{B}\K^\ast$ to coincide with $E$ on these simplices. For a 2-simplex
$((\Xi_{ijk},\alpha_{ijk}),\phi_{f_{ij}},\phi_{f_{jk}},\phi_{f_{ik}})$
in $\mathcal{C}^J/\!/\mathbf{B}\K^\ast$, we define its image via $E/\!/\mathbf{B}\K^\ast$
to be the 2-simplex
\[
\begin{tikzcd}
            & {E(X_j,L_{X_j})} \arrow[dr,"{E(f_{jk},\varphi_{f_{jk}})}"]\arrow[d,Rightarrow,shorten >=1mm, shorten <=1mm,"\alpha_{ijk}"]&\\
            {E(X_i,L_{X_i})}\arrow[ru,"{E(f_{ij},\varphi_{f_{ij}})}"]\arrow[rr,"{E(f_{ik},\varphi_{f_{ik}})}"']&{}&{E(X_k,L_{X_k})}
            \end{tikzcd}.
\]
{Using the $\Vect_\K$-linearity of $E$, one easily checks that this indeed is a 2-simplex in $\Vect_\K/\!/\mathbf{B}\K^\ast$.}
It is straightforward to see that $E/\!/\mathbf{B}\K^\ast$ maps 3-simplices of $\mathcal{C}^J/\!/\mathbf{B}\K^\ast$ to 3-simplices of $\Vect_\K$; { by 3-coskeletality this concludes the construction of $E/\!/\mathbf{B}\K^\ast$.}
The commutativity of \eqref{diagr:withEZ2} is manifest, with identity 2-cells. 
\end{proof}
\begin{corollary}
Let $Z\colon \mathcal{C}\to 2\Vect_\K$ be an anomalous representation with anomaly $J$. The morphism $E_Z\colon \mathcal{C}^J\to \Vect_K$ is $\mathbf{B}\K^\ast$-equivariant.    
\end{corollary}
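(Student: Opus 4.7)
The plan is to observe that this Corollary is essentially an immediate consequence of the preceding Lemma \ref{lemma:equivariant} once one recognizes that the functor $E_Z$ satisfies its hypothesis. Specifically, Proposition \ref{prop:anom-to-lin} already established that $E_Z$ belongs to $\mathrm{Hom}_{\mathrm{Vect}_\K}(\mathcal{C}^J,\mathrm{Vect}_\K)$, i.e., it is $\mathrm{Vect}_\K$-linear (indeed additive and cocontinuous over $\mathcal{C}$, since on each fiber $\iota_X$ it is given by tensoring with $Z(X)$ over $J(X)$). So the proof reduces to a single invocation of Lemma \ref{lemma:equivariant} with $E=E_Z$.

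More explicitly, I would write: since $E_Z$ is $\mathrm{Vect}_\K$-linear by Proposition \ref{prop:anom-to-lin}, Lemma \ref{lemma:equivariant} produces a functor $E_Z/\!/\mathbf{B}\K^\ast\colon \mathcal{C}^J/\!/\mathbf{B}\K^\ast \to \mathrm{Vect}_\K/\!/\mathbf{B}\K^\ast$ fitting into the lax homotopy commutative diagram \eqref{diagr:withEZ2}, which by definition exhibits $E_Z$ as $\mathbf{B}\K^\ast$-equivariant. No further verification is needed, because the construction in the proof of Lemma \ref{lemma:equivariant} only uses $\mathrm{Vect}_\K$-linearity when checking that the equation $E(f_{jk},\varphi_{f_{jk}})\circ E(f_{ij},\varphi_{f_{ij}})=E(f_{ik},\varphi_{f_{ik}})\cdot \alpha_{ijk}$ is satisfied on 2-simplices decorated by scalars $\alpha_{ijk}\in \K^\ast$.

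There is essentially no obstacle: the entire content of the Corollary is the observation that $E_Z$ lies in the subcategory of $\mathrm{Vect}_\K$-linear functors, and this was already built into its construction via the tensor product $L_{X_i}\otimes_{J(X_i)}Z(X_i)$, which is manifestly $\mathrm{Vect}_\K$-linear in the $L_{X_i}$ argument. Hence the proof is a one-line citation: \emph{This is an immediate consequence of Lemma \ref{lemma:equivariant}, since $E_Z$ is $\mathrm{Vect}_\K$-linear by Proposition \ref{prop:anom-to-lin}.}
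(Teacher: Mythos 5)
Your proposal is correct and matches the paper's (implicit) argument exactly: the paper states this corollary without proof precisely because it follows by applying Lemma \ref{lemma:equivariant} to $E_Z$, whose $\mathrm{Vect}_\K$-linearity is part of the content of Proposition \ref{prop:anom-to-lin}. Nothing further is needed.
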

\begin{lemma}
    Let $Z\colon \mathcal{C}\to 2\Vect_\K$ be an anomalous representation with anomaly $J$, and let $\xi$ be an $\alpha$-structure on $J$. Then $\xi$ induces a lax section $\hat{\xi}$ of the projection $\pi\colon \mathcal{C}^J\to \mathcal{C}^J/\!/\mathbf{B}\K^\ast$.
\end{lemma}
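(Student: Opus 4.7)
The plan is to construct $\hat\xi$ at the simplicial level by exploiting the structural parallelism between the defining data of an $\alpha$-structure and the defining data of a 2-simplex in $\mathcal{C}^J/\!/\mathbf{B}\K^\ast$. The crucial observation is that $\xi$ produces, for every object $X_i$, a right $J(X_i)$-module $\xi(X_i)$, and for every morphism $f_{ij}$, a comparison morphism $\xi(f_{ij})\colon \xi(X_i)\to \xi(X_j)\otimes_{J(X_j)}J(f_{ij})$ satisfying \eqref{eq:diagram:xi-J-alpha}; and these are precisely the ingredients needed to convert a scalar-twisted compatibility \eqref{eq:diagram-L-J-mod-K} into the strict compatibility \eqref{eq:diagram-L-J}.

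First, I would set $\hat\xi$ to be the identity on the underlying $\mathcal{C}$-data, and to modify the $L$-slot of a 0-simplex $(X_i,L_{X_i})$ of $\mathcal{C}^J/\!/\mathbf{B}\K^\ast$ by coupling $L_{X_i}$ with $\xi(X_i)$ via the $\Vect_\K$-tensoring of $\mathrm{Mod}_{J(X_i)}$ recalled in Remark \ref{rem:tensored-over-vect}. On a 1-simplex $(f_{ij},\varphi_{f_{ij}})$ one composes $\varphi_{f_{ij}}$ with the morphism $\xi(f_{ij})$. On a 2-simplex $((\Xi_{ijk},\alpha_{ijk}),\varphi_{f_{ij}},\varphi_{f_{jk}},\varphi_{f_{ik}})$ one then checks that the scalar $\alpha_{ijk}$ appearing in \eqref{eq:diagram-L-J-mod-K} is exactly the scalar $\alpha_{\Xi_{ijk}}$ appearing in \eqref{eq:diagram:xi-J-alpha} (because by definition of $\mathcal{C}^J/\!/\mathbf{B}\K^\ast$ the label $\alpha_{ijk}$ is the image of $\Xi_{ijk}$ along the 2-cocycle obtained from the second row of \eqref{diagr:def-CJoverKast}), so the two scalars cancel and the resulting data satisfies \eqref{eq:diagram-L-J}, defining a genuine 2-simplex of $\mathcal{C}^J$. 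Higher simplices are then forced: in $\mathcal{C}^J$ a $k$-simplex with $k\geq 3$ is determined by its boundary together with the $\mathcal{C}$-data, and the latter is preserved by $\hat\xi$.

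Next, to exhibit $\hat\xi$ as a \emph{lax} section, I would produce a lax natural transformation $\pi\circ\hat\xi\Rightarrow \mathrm{id}_{\mathcal{C}^J/\!/\mathbf{B}\K^\ast}$ whose component at $(X_i,L_{X_i})$ is the canonical 1-morphism in $\mathcal{C}^J/\!/\mathbf{B}\K^\ast$ induced by the right $J(X_i)$-module structure comparing $L_{X_i}\otimes \xi(X_i)$ (or whichever explicit realization is chosen) with $L_{X_i}$, and whose 2-cell components on a morphism $(f_{ij},\varphi_{f_{ij}})$ are the evident diagrams filled in $\mathbf{B}\K^\ast$ using the scalars supplied by $\xi$. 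The failure of this transformation to be invertible is precisely what makes $\hat\xi$ lax rather than strict, and correctly reflects the fact that $\xi$ is not assumed to be an invertible $\alpha$-structure.

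The main obstacle will be verifying coherence on 3-simplices: one has to check that the 2-cocycle condition $\alpha_{ikl}\alpha_{ijk}=\alpha_{ijl}\alpha_{jkl}$, which controls 3-simplices in $\mathbf{B}^2\K^\ast$ and hence in $\mathcal{C}^J/\!/\mathbf{B}\K^\ast$, matches the tetrahedral coherence that the $\xi$-morphisms satisfy by functoriality of $\xi$ as a 2-cell in \eqref{diagram-top2}. Once this coherence is in place, the lax naturality of the transformation $\pi\circ\hat\xi\Rightarrow\mathrm{id}$ follows formally, and one concludes that $\hat\xi$ is a lax section of $\pi$ as claimed.
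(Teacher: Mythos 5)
There is a genuine gap here, on two levels. First, the map you are trying to build goes in the wrong direction relative to what the paper constructs and later uses: the $\hat{\xi}$ of this lemma is a functor $\hat{\xi}\colon \mathcal{C}\to \mathcal{C}^{J}/\!/\mathbf{B}\K^\ast$ with $\hat{\xi}(X_i)=(X_i,\xi(X_i))$ and $\hat{\xi}(f_{ij})=(f_{ij},\xi(f_{ij}))$, i.e.\ a lax section of the projection $\mathcal{C}^{J}/\!/\mathbf{B}\K^\ast\to\mathcal{C}$ (the bottom vertical arrow of \eqref{diagr:def-CJoverKast}); this is what Corollary \ref{cor:proj-from-xi} composes with $E/\!/\mathbf{B}\K^\ast$. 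The statement's phrasing of which projection is meant is admittedly misleading, but your $\hat{\xi}$, a functor $\mathcal{C}^{J}/\!/\mathbf{B}\K^\ast\to\mathcal{C}^{J}$, is not the object needed. The intended proof is essentially formal: the $\alpha$-structure square \eqref{diagram-top2}, read as a lax square from $\mathcal{C}$ over the cospan $\mathcal{C}\xrightarrow{J}2\Vect_\K\xleftarrow{\iota}\mathbf{B}^2\K^\ast$, factors through the lax homotopy pullback $\mathcal{C}^{J}/\!/\mathbf{B}\K^\ast$ by its universal property, and the resulting factorization hands you both $\hat{\xi}$ and the lax $2$-cell witnessing $\pi\circ\hat{\xi}\Rightarrow\mathrm{id}_{\mathcal{C}}$; the only computation is the one recorded in \eqref{eq:diagram:xi-J-alpha}, which says exactly that $(\Xi_{ijk};\xi(f_{ij}),\xi(f_{jk}),\xi(f_{ik});\alpha_{\Xi_{ijk}})$ is a $2$-simplex of $\mathcal{C}^{J}/\!/\mathbf{B}\K^\ast$.

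Second, even taken on its own terms your construction does not go through. Both $L_{X_i}$ and $\xi(X_i)$ are \emph{right} $J(X_i)$-modules, so there is no canonical way to ``couple'' them into a new right $J(X_i)$-module: the tensoring of Remark \ref{rem:tensored-over-vect} only allows tensoring $L_{X_i}$ with a bare $\K$-vector space, and $L_{X_i}\otimes_{J(X_i)}\xi(X_i)$ is not defined for two right modules. Likewise $\varphi_{f_{ij}}\colon L_{X_i}\to L_{X_j}\otimes_{J(X_j)}J(f_{ij})$ and $\xi(f_{ij})\colon \xi(X_i)\to\xi(X_j)\otimes_{J(X_j)}J(f_{ij})$ have unrelated sources and targets, so they cannot be composed. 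Finally, the scalar $\alpha_{ijk}$ decorating a $2$-simplex of $\mathcal{C}^{J}/\!/\mathbf{B}\K^\ast$ is an \emph{arbitrary} element of $\K^\ast$ subject only to \eqref{eq:diagram-L-J-mod-K}; it is not forced to equal $\alpha_{\Xi_{ijk}}$ for the fixed cocycle $\alpha$, and even if it were, combining \eqref{eq:diagram-L-J-mod-K} with \eqref{eq:diagram:xi-J-alpha} would \emph{multiply} the two scalars rather than cancel them --- cancellation would require inverting the $2$-cell $\xi$, i.e.\ invertibility of the $\alpha$-structure, which this lemma deliberately does not assume.
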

\begin{proof}
The defining diagram \eqref{diagram-top2} of the $\alpha$-structure $\xi$ can be read as a lax homotopy commutative diagram of the form
\begin{equation}\label{diagr:induces-hat2}
            \begin{tikzcd}
            {\mathcal{C}}
                \arrow[r,"\alpha"]\arrow[d,"\mathrm{id}"']&{\mathbf{B}^2\K^\ast}\arrow[d,"\iota"]\\
            \mathcal{C}\arrow[ur,Rightarrow,shorten <=2mm, shorten >=2mm,"\xi"']
                \arrow[r,"J"']&2\mathrm{Vect}_\K
            \end{tikzcd}.
        \end{equation}
By the universal property of the lax homotopy pullback diagram defining $\mathcal{C}^J/\!/\mathbf{B}\K^\ast$, (the bottom square in \eqref{diagr:def-CJoverKast}), diagram \eqref{diagr:induces-hat2} factors as
\begin{equation}\label{diagr:here-is-alpha}
            \begin{tikzcd}
            \mathcal{C}\ar[dr,"\hat{\xi}"]\arrow[drr,bend left=20,"\alpha"]\arrow[ddr,bend right=20,"\mathrm{id}"']&{}&{}\arrow[ld,shorten < = 34pt,Leftarrow]\\
            {}&\mathcal{C}^J/\!/\mathbf{B}\K^\ast
                \arrow[r]\arrow[d,"\pi"]\arrow[dr,phantom,"\lrcorner",very near start]
                \arrow[ld,shorten > =37pt,Leftarrow]&\mathbf{B}^2\K^\ast\arrow[d,"\iota"]\arrow[dl,Leftarrow,shorten <=2mm, shorten >=2mm]\\
                {}&\mathcal{C}\arrow[r]&2\mathrm{Vect}_\K
            \end{tikzcd}.
        \end{equation}
On the right of this diagram we read the lax homotopy commutative diagram        
\begin{equation*}
            \begin{tikzcd}
            {{\mathcal{C}^{J}/\!/\mathbf{B}\K^\ast}}
            \arrow[dd,"\pi", bend left=40]\\
            \\
            {\mathcal{C}}\arrow[uu,Rightarrow,shorten <=4mm,shorten >=4mm]\arrow[uu,"\hat{\xi}", bend left=40]
            \end{tikzcd}.
        \end{equation*} 
       telling us that $\hat{\xi}$ is a lax section of $\mathcal{C}^{J}/\!/\mathbf{B}\K^\ast\to \mathcal{C}$.
\end{proof}        
\begin{remark}\label{rem:explicit-section}
{For any morphism $f_{ij}\colon X_i\to X_j$ in $\mathcal{C}$ one has
$
\hat{\xi}(X_i)=(X_i,\xi(X_i))$ and 
$
\hat{\xi}(f_{ij})=(f_{ij},\xi(f_{ij}))$.
}
\end{remark}

\begin{corollary}\label{cor:proj-from-xi}
   Let $Z\colon \mathcal{C}\to 2\Vect_\K$ be an anomalous representation with anomaly $J$, and let $\xi$ be an $\alpha$-structure on $J$. Then with any $\mathrm{Vect}_\K$-linear morphism $E\colon \mathcal{C}^{J}\to \mathrm{Vect}_\K$ is naturally associated a projective representation of $\mathcal{C}$ of class $\alpha$.
\end{corollary}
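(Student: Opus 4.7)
The plan is to obtain the projective representation as a composition that combines Lemma \ref{lemma:equivariant} with the lax section $\hat{\xi}$ constructed above. First I would apply Lemma \ref{lemma:equivariant} to the given $\mathrm{Vect}_\K$-linear functor $E$ to produce a functor
\[
E/\!/\mathbf{B}\K^\ast\colon \mathcal{C}^J/\!/\mathbf{B}\K^\ast \to \mathrm{Vect}_\K/\!/\mathbf{B}\K^\ast
\]
that, by diagram \eqref{diagr:withEZ2}, sits over the identity of $\mathbf{B}^2\K^\ast$. Then I would compose with the lax section $\hat{\xi}\colon \mathcal{C}\to \mathcal{C}^J/\!/\mathbf{B}\K^\ast$ induced by the $\alpha$-structure, thus defining
\[
\rho_{E,\xi} \;:=\; (E/\!/\mathbf{B}\K^\ast) \circ \hat{\xi} \colon \mathcal{C} \longrightarrow \mathrm{Vect}_\K/\!/\mathbf{B}\K^\ast.
\]
By definition this is a projective representation of $\mathcal{C}$.

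The second step is to identify its class. For this I would paste the outer right column of \eqref{diagr:here-is-alpha}, which exhibits the composition $\mathcal{C}\xrightarrow{\hat{\xi}}\mathcal{C}^J/\!/\mathbf{B}\K^\ast\to \mathbf{B}^2\K^\ast$ as homotopic to $\alpha$, with the bottom triangle of \eqref{diagr:withEZ2}, which exhibits $E/\!/\mathbf{B}\K^\ast$ as commuting (up to lax 2-cell) with the projections to $\mathbf{B}^2\K^\ast$. The resulting pasted diagram is precisely a lax homotopy commutative triangle of the form required by Definition \ref{def:proj-class-alpha}, with $\rho$ replaced by $\rho_{E,\xi}$ and with the appropriate composite $\beta$ filling it. Hence $\rho_{E,\xi}$ is a projective representation of class $\alpha$.

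Finally, one should check naturality of the construction $E\mapsto \rho_{E,\xi}$, which is immediate since both $E\mapsto E/\!/\mathbf{B}\K^\ast$ (functorial in $E$ by Lemma \ref{lemma:equivariant}) and postcomposition with $\hat{\xi}$ are natural operations. I do not expect any genuine obstacle here: the content is a clean diagrammatic composition, and all the difficult verifications (the behaviour on 2-simplices, the $\mathbf{B}\K^\ast$-equivariance, and the identification of $\hat{\xi}$ with a lax section recovering $\alpha$) have already been carried out in the two preceding lemmas. The only point requiring some care is keeping track of which 2-cells are invertible versus lax when pasting the triangles; since $\mathbf{B}^2\K^\ast$ is a $2$-groupoid the pasted filler $\beta$ is automatically well-defined, so no invertibility hypothesis on $\xi$ is needed for the conclusion, in agreement with Remark \ref{rem:no-invertibility-needed}.
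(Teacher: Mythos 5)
Your proposal is correct and follows essentially the same route as the paper: form the composite $(E/\!/\mathbf{B}\K^\ast)\circ\hat{\xi}$ using Lemma \ref{lemma:equivariant} and the lax section $\hat{\xi}$, then read off the class $\alpha$ by pasting the relevant triangles of \eqref{diagr:withEZ2} and \eqref{diagr:here-is-alpha}. Your closing observation that no invertibility of $\xi$ is needed is also consistent with the paper's treatment.
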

\begin{proof}
With our data we can then form the composition
$
\mathcal{C}\xrightarrow{\hat{\xi}} {{\mathcal{C}^{J}/\!/\mathbf{B}\K^\ast}} \xrightarrow{E/\!/\mathbf{B}\K^\ast} \mathrm{Vect}_\K/\!/\mathbf{B}\K^\ast,
$
that gives a projective representation of $\mathcal{C}$. By \eqref{diagr:withEZ2}, the 2-cocycle associated with this projective representation is the composition 
$
\mathcal{C}\xrightarrow{\hat{\xi}} {{\mathcal{C}^{J}/\!/\mathbf{B}\K^\ast}} \to \mathbf{B}^2\K^\ast,
$
and by \eqref{diagr:here-is-alpha} this is $\alpha$.
\end{proof}

\subsection{Two ways of going from anomalous representations to projective ones}
Putting the pieces together we see we have actually exhibited two ways of going from anomalous representations endowed with $\alpha$-structures to projective representations of class $\alpha$. The first one has been described in Section \ref{sec:anomalousC} and consists in producing the functor
$
\rho^{}_{Z\circ \xi}\colon \mathcal{C}\to \mathrm{Vect}_\K/\!/\mathbf{B}\K^\ast
$
as described in Proposition \ref{prop:proj-vs-anomalous} and Remarks \ref{rem:no-invertibility-needed} and \ref{rem:rho-Z-explicit}. 
The second one consists in using the results from Section \ref{sec:linear-to projective2}: one first produces the $\Vect_\K$-linear representation $E_Z$ of $\mathcal{C}^J$ and then the projective representation
$
(E_Z/\!/\mathbf{B}\K^\ast)\circ \hat{\xi}\colon \mathcal{C}\to \mathrm{Vect}_\K/\!/\mathbf{B}\K^\ast
$
from Corollary \ref{cor:proj-from-xi}.
It will probably not be a surprise that these two projective representations are equivalent. We state and prove this in the following Proposition.
\begin{proposition}
    Let $Z\colon \mathcal{C}\to 2\Vect_\K$ be an anomalous representation with anomaly $J$, and let $\xi$ be an $\alpha$-structure on $J$. Then we have a natural equivalence of projective representations of class $\alpha$
    \begin{equation}\label{eq:two-is-one}
            \begin{tikzcd}
            & \mathcal{C}^J/\!/\mathbf{B}\K^\ast \arrow[dr,"E_Z/\!/\mathbf{B}\K^\ast"]&\\
            \mathcal{C}\arrow[ru,"\hat{\xi}"]\arrow[rr,"\rho_{Z\circ\xi}"']&\arrow[u,Leftarrow,shorten <=2mm, shorten >=2mm]{}&{\Vect_\K}/\!/\mathbf{B}\K^\ast
            \end{tikzcd}.
        \end{equation} 
\end{proposition}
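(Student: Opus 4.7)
The plan is to prove the equivalence by showing that, once both projective representations are made fully explicit on 0-, 1-, and 2-simplices, they agree on the nose, so the natural equivalence in \eqref{eq:two-is-one} can be taken to be the identity 2-cell. All the ingredients for this comparison have already been computed explicitly in the paper; the proof is therefore essentially a bookkeeping argument and not a new construction.

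Concretely, I would proceed as follows. First, on a 0-simplex $X_i$ of $\mathcal{C}$, apply Remark \ref{rem:explicit-section} and the defining formula \eqref{eq:EZ2-obj} to compute
\[
(E_Z/\!/\mathbf{B}\K^\ast)\circ \hat{\xi}\,(X_i) \;=\; E_Z(X_i,\xi(X_i)) \;=\; \xi(X_i)\otimes_{J(X_i)} Z(X_i),
\]
which is exactly $\rho_{\xi\circ Z}(X_i)$ as given in Remark \ref{rem:rho-Z-explicit}. Next, on a 1-simplex $f_{ij}\colon X_i\to X_j$, use again Remark \ref{rem:explicit-section} to rewrite $\hat{\xi}(f_{ij}) = (f_{ij},\xi(f_{ij}))$, and then apply \eqref{eq:EZ2-morph} with $L_{X_\bullet}=\xi(X_\bullet)$ and $\varphi_{f_{ij}}=\xi(f_{ij})$. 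The resulting two-step composition
\[
\xi(X_i)\otimes_{J(X_i)} Z(X_i) \xrightarrow{\xi(f_{ij})\otimes \mathrm{id}} \xi(X_j)\otimes_{J(X_j)}J(f_{ij})\otimes_{J(X_i)} Z(X_i) \xrightarrow{\mathrm{id}\otimes Z(f_{ij})} \xi(X_j)\otimes_{J(X_j)} Z(X_j)
\]
coincides letter for letter with the explicit formula for $\rho_{\xi\circ Z}(f_{ij})$ recorded in Remark \ref{rem:rho-Z-explicit}.

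For 2-simplices, the construction of $E/\!/\mathbf{B}\K^\ast$ given in the proof of Lemma \ref{lemma:equivariant} decorates the image of a 2-simplex by the $\K^\ast$-scalar appearing in its source, and the explicit formula for $\hat\xi$ in Remark \ref{rem:explicit-section} shows that $\hat\xi$ sends a 2-simplex $(\Xi_{ijk},f_{ij},f_{jk},f_{ik})$ to a 2-simplex of $\mathcal{C}^J/\!/\mathbf{B}\K^\ast$ decorated precisely by $\alpha_{\Xi_{ijk}}$. Hence $(E_Z/\!/\mathbf{B}\K^\ast)\circ\hat\xi$ decorates this 2-simplex by $\alpha_{\Xi_{ijk}}$, which is the same scalar as in $\rho_{\xi\circ Z}$ (see Remark \ref{rem:rho-Z-explicit}). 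Since higher simplices of $\mathrm{Vect}_\K/\!/\mathbf{B}\K^\ast$ are determined by their boundary, this finishes the comparison of the two functors $\mathcal{C}\to \mathrm{Vect}_\K/\!/\mathbf{B}\K^\ast$ simplex by simplex.

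The only mild subtlety, and the step where one has to be careful, is that $\rho_{\xi\circ Z}$ was originally produced by a universal property of a lax homotopy pullback in the proof of Proposition \ref{prop:proj-vs-anomalous}, so it is a priori defined only up to contractible choice; similarly, $E_Z$ was produced via the universal property in the proof of Proposition \ref{prop:anom-to-lin}. The cleanest way to handle this is to invoke the explicit representatives already given by Remarks \ref{rem:rho-Z-explicit} and by formulas \eqref{eq:EZ2-obj}-\eqref{eq:EZ2-morph}, which are legitimate choices of these universal factorizations, and then observe that the identifications above exhibit the identity as the required 2-cell in \eqref{eq:two-is-one}. Naturality in the data is immediate since each side is built from the same tensor products, structure morphisms $Z(f_{ij})$, $\xi(f_{ij})$, and $J(\Xi_{ijk})$.
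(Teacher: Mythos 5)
Your proposal is correct and follows essentially the same route as the paper's own proof: a simplex-by-simplex comparison using the explicit formulas from Remark \ref{rem:rho-Z-explicit}, Remark \ref{rem:explicit-section}, and \eqref{eq:EZ2-obj}--\eqref{eq:EZ2-morph}, concluding that the two functors agree on the nose with identity 2-cell. Your additional remark about choosing explicit representatives of the universal factorizations is a reasonable clarification that the paper leaves implicit.
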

\begin{proof}
We will recursively show that \eqref{eq:two-is-one} strongly homotopy commutes, with canonical 2-cell given by the identity.\footnote{So it strictly commutes.} By recursively, we mean that we will first show this on 0-simplices of $\mathcal{C}$, next on 1-simplices, and finally on 2-simplices. {By the 3-coskeletality of ${\Vect_\K}/\!/\mathbf{B}\K^\ast$}, this will conclude the proof.
We already have an explicit description of $\rho_{Z\circ\xi}$ from Remark \ref{rem:rho-Z-explicit}, so we just need to put together Proposition \ref{prop:anom-to-lin} with Remark \ref{rem:explicit-section} and compare.
 Let $X_i$ be an object of $\mathcal{C}$. Then  
 \[
(E_Z/\!/\mathbf{B}\K^\ast\circ \hat{\xi})(X_i)=E_Z(X_i,\xi(X_i))=
\xi(X_i)\otimes_{J(X_i)}Z(X_i),
\]
by \eqref{eq:EZ2-obj}. Therefore, $E_Z/\!/\mathbf{B}\K^\ast\circ \hat{\xi}$ coincides with $\rho_{Z\circ\xi}$ on objects.
\par
For a 1-morphism $f_{ij}\colon X_i\to X_j$ in $\mathcal{C}$,
\[
(E_Z/\!/\mathbf{B}\K^\ast\circ \hat{\xi})(f_{ij})=E_Z(f_{ij},\xi(f_{ij})),
\]
and so it is given by the composition
\[
\xi(X_i)\otimes_{J(X_i)} Z(X_i) \xrightarrow{\xi(f_{ij})\otimes \mathrm{id}_{Z(X_i)}}
\xi(X_j)\otimes_{J(X_j)}J(f_{ij})\otimes_{J(X_i)} Z(X_i)\xrightarrow{\mathrm{id}_{\xi(X_j)}\otimes Z(f_{ij})} \xi(X_j)\otimes_{J(X_j)}Z(X_j)
\]
by \eqref{eq:EZ2-morph}. Therefore, $E_Z/\!/\mathbf{B}\K^\ast\circ \hat{\xi}$ coincides with $\rho_{Z\circ\xi}$ on 1-morphisms. For a 2-simplex $(\Xi_{ijk},\phi_{f_{ij}},\phi_{f_{jk}},\phi_{f_{ik}})$
in $\mathcal{C}$, its image via $E_Z/\!/\mathbf{B}\K^\ast\circ \hat{\xi}$
is the 2-simplex
\[
\begin{tikzcd}
            & {E_Z(X_j,\xi(X_j))} \arrow[dr,"{E_Z(f_{jk},\xi(f_{jk}))}"]\arrow[d,Rightarrow,shorten >=1mm, shorten <=1mm,"\alpha_{ijk}"]&\\
            {E_Z(X_i,\xi(X_i))}\arrow[ru,"{E_Z(f_{ij},\xi(f_{ij}))}"]\arrow[rr,"{E_Z(f_{ik},\xi(f_{ik}))}"']&{}&{E_Z(X_k,\xi(X_k))}
            \end{tikzcd},
\]
and so the 2-simplex
\[
\begin{tikzcd}
            & {\xi(X_j)\otimes_{J(X_j)}Z(X_j)} \arrow[dr,"{E(f_{jk},\xi(f_{jk}))}"]\arrow[d,Rightarrow,shorten >=1mm, shorten <=1mm,"\alpha_{ijk}"]&\\
            {\xi(X_i)\otimes_{J(X_i)}Z(X_i)}\arrow[ru,"{E_Z(f_{ij},\xi(f_{ij}))}"]\arrow[rr,"{E_Z(f_{ik},\xi(f_{ik}))}"']&{}&{\xi(X_k)\otimes_{J(X_k)}Z(X_k)}
            \end{tikzcd},
\]
of $\Vect_\K/\!/\mathbf{B}\K^\ast$. Since in showing the coincidence of $E_Z/\!/\mathbf{B}\K^\ast\circ \hat{\xi}$ with $\rho_{Z\circ\xi}$ on 1-morphisms we have in particular shown that $E_Z(f_{ij},\xi(f_{ij}))=\rho_{Z\circ\xi}(f_{ij})$, this 2-simplex is
\[
\begin{tikzcd}
            & {\xi(X_j)\otimes_{J(X_j)}Z(X_j)} \arrow[dr,"{\rho_{Z\circ\xi}(f_{jk})}"]\arrow[d,Rightarrow,shorten >=1mm, shorten <=1mm,"\alpha_{ijk}"]&\\
            {\xi(X_i)\otimes_{J(X_i)}Z(X_i)}\arrow[ru,"{\rho_{Z\circ\xi}(f_{ij})}"]\arrow[rr,"{\rho_{Z\circ\xi}(f_{ik})}"']&{}&{\xi(X_k)\otimes_{J(X_k)}Z(X_k)}
            \end{tikzcd},
\]
showing that $E_Z/\!/\mathbf{B}\K^\ast\circ \hat{\xi}$ coincides with $\rho_{Z\circ\xi}$ on 2-simplices, too.
\end{proof}

\section{The Stolz--Teichner subcategory $\mathcal{C}^J_{\mathrm{ST}}$}

Let $G$ be a finite group, and let $\alpha$ be a $\K^\ast$-valued 2-cocycle on $G$, and $J_\alpha$ be the composition of $\alpha$ with the inclusion $\mathbf{B}^2\K^\ast\hookrightarrow 2\Vect_K$. Then we have seen in Proposition
\ref{prop:proj-vs-anomalous}
 that we have an equivalence   
{ between the category of projective representations of the group $\mathbf{B}G$ 
of class $\alpha$ and the category of anomalous representations of $\mathbf{B}G$ with anomaly $J_\alpha$, and 
in Proposition \ref{prop:anom-to-lin}
that there is a distinguished natural map from 
anomalous representations of $\mathbf{B}G$ with anomaly $J_\alpha$
to 
$\mathrm{Hom}_{\mathrm{Vect}_\K}((\mathbf{B}G)^{J_\alpha},\mathrm{Vect}_\K$.
} Putting these two together, and recalling that a projective representation of the groupoid $\mathbf{B}G$ in the sense of Section \ref{sec:proj-rep-categories} is precisely a projective representation of the group $G$ in the sense of group representations, we get 
a distinguished natural map
 { from projective representations of $G$ of class $\alpha$ to
$\mathrm{Hom}_{\mathrm{Vect}_\K}((\mathbf{B}G)^{J_\alpha},\mathrm{Vect}_\K)$.}
Comparing this with the classical { equivalence 
between projective representations of $G$ of class $\alpha$ and linear representations of $G^\alpha$ such that $\K^\ast$ acts as scalars}
suggests that the category $\mathbf{B}G^{J_\alpha}$ should be closely related to the central extension $G^\alpha$. \par
The naive guess $(\mathbf{B}G)^{J_\alpha}\cong \mathbf{B}G^\alpha$ is clearly wrong, since $(\mathbf{B}G)^{J_\alpha}$ is not even a groupoid. And even taking the core of $(\mathbf{B}G)^{J_\alpha}$, i.e., its maximal subgroupoid, we still would not have an equivalence: $(\mathbf{B}G)^{J_\alpha}$ has many more objects than $\mathbf{B}G^\alpha$ (that 
has a single object). Yet, as we are going to show in this section, the category $\mathcal{C}^J$ contains a distinguished subcategory $\mathcal{C}^J_{\mathrm{ST}}$, \emph{with the same objects as $\mathcal{C}$}, capturing all the information of $\mathrm{Vect}_\K$-linear functors out of $\mathcal{C}^J$. 
Specializing this to the case where $\mathcal{C}=\mathbf{B}G$ and $J=J_\alpha$, one finds a distinguished subcategory $(\mathbf{B}G)^{J_\alpha}_{\mathrm{ST}}$ of $\mathbf{B}G^{J_\alpha}$, that encodes all of the relevant information of $\mathbf{B}G^\alpha$.

More precisely, the fact that the 
subgroup $\K^\ast$ acts as scalars means that the action of $\K^\ast$ is induced by the inclusion $\K^\ast\subseteq \K$ and by the action of $\K$ as field of scalars for $\K$-vector spaces. This means that the linear representations of the group central extension $G^\alpha=\K^\ast\times_\alpha G$ such that $\K^\ast$ acts as scalars are precisely the linear representations of the \emph{monoid} central extension $M_\alpha=\K\times_\alpha G$ such that $\K$ acts as the field of scalars. Therefore we have an equivalence { between projective representations of $G$ of class $\alpha$
and linear representations of $M_\alpha$ such that $\K$ acts as scalars},
and what we are going to show is that there is an equivalence 
$
\mathbf{B}M_\alpha\cong (\mathbf{B}G)^{J_\alpha}_{\mathrm{ST}}$.

\begin{remark}The subscript ``ST'' stands for ``Stolz--Teichner''. This choice is dictated by the fact that, when $\mathcal{C}$ is the category of conformal spin bordisms and $J$ is the $n$-th tensor power of the so-called Fermionic (or Clifford/Fock) anomaly, the category $\mathcal{C}^J_{\mathrm{ST}}$ is the enriched bordism category considered by Stolz and Teichner in their description of Clifford linear field theories of degree $n$ in \cite{what-is-an-elliptic-object}.
\end{remark}
\begin{definition}\label{def:st-category}
    Let $J\colon \mathcal{C}\to 2\Vect_\K$ be a functor. The Stolz--Teichner category $\mathcal{C}^J_{\mathrm{ST}}$ is the category defined as follows.
    \begin{itemize}
        \item 0-simplices of $\mathcal{C}^J_{\mathrm{ST}}$ are the 0-simplices $X_i$ of $\mathcal{C}$.
        \item 1-simplices $X_i\xrightarrow{(f_{ij},v_{f_{ij}})} X_j$
        of $\mathcal{C}^J_{\mathrm{ST}}$ are pairs consisting of a 1-simplex $f_{ij}\colon X_i\to X_j$ of $\mathcal{C}$, together with a pointing, i.e., a distinguished element, $v_{f_{ij}}$ of the $(J(X_j),J(X_i))$-bimodule $J(f_{ij})$.
\item 2-simplices 
        \[
 \begin{tikzcd}
            & {X_j} \arrow[dr,"{(f_{jk},v_{f_{jk}})}"]&\\
            {X_i}\arrow[ru,"{(f_{ij},v_{f_{ij}})}"]\arrow[rr,"{(f_{ik},v_{f_{ik}})}"']&\arrow[u,Leftarrow,shorten <=2mm, shorten >=2mm,"\Xi_{ijk}"']{}&{X_k}
            \end{tikzcd}.
\]
of $\mathcal{C}^J_{\mathrm{ST}}$ are pairs consisting of a 2-simplex 
\[
 \begin{tikzcd}
            & {X_j} \arrow[dr,"{f_{jk}}"]&\\
            {X_i}\arrow[ru,"{f_{ij}}"]\arrow[rr,"{f_{ik}}"']&\arrow[u,Leftarrow,shorten <=2mm, shorten >=2mm,"\Xi_{ijk}"']{}&{X_k}
            \end{tikzcd}.
\]
of $\mathcal{C}$ and of a triple of 1-simplices
 \[
     X_i\xrightarrow{(f_{ij},v_{f_{ij}})}X_j, \qquad\qquad \qquad 
     \\
     X_j\xrightarrow{(f_{jk},v_{f_{jk}})}X_k,  \qquad\qquad \qquad 
     \\
     X_i\xrightarrow{(f_{ik},v_{f_{ik}})}X_k
 \]
 in $\mathcal{C}^J_{\mathrm{ST}}$ such that $
 J(\Xi_{ijk})(v_{f_{jk}}\otimes v_{f_{ij}})=v_{f_{ik}},
 $
 i.e., such that $
 J(\Xi_{ijk})\colon (J(f_{jk}),v_{f_{jk}})\otimes_{J(X_j)}(J(f_{ij}),v_{f_{ij}})\to (J(f_{ik}),v_{f_{ik}})$
 is a morphism of pointed bimodules;
 \item For $k\geq 3$, the set $\Delta^k(\mathcal{C}^J_{\mathrm{ST}})$ of $k$-simplices of $\mathcal{C}^J_{\mathrm{ST}}$ is defined recursively as the fiber product
\[
\Delta^k(\mathcal{C}^J_{\mathrm{ST}})=\Delta^k(\mathcal{C})\times_{\partial \Delta^k(\mathcal{C})}\partial \Delta^k(\mathcal{C}^J_{\mathrm{ST}}).
\]
    \end{itemize}
   \end{definition}

\begin{example}
    Let $G$ be a finite group, and let $\alpha$ be a $\K^\ast$-valued 2-cocycle on $G$, and $J_\alpha$ be the composition of $\alpha$ with the inclusion $\mathbf{B}^2\K^\ast\hookrightarrow 2\Vect_K$. Then there is a natural equivalence $\mathbf{B}M_\alpha\cong (\mathbf{B}G)^{J_\alpha}_{\mathrm{ST}}$. 
\end{example}

\begin{proposition}\label{prop:st-category}
    Let $J\colon \mathcal{C}\to 2\Vect_\K$ be a functor. The Stolz-Teichner category $\mathcal{C}^J_{\mathrm{ST}}$ of $\mathcal{C}^J$ is (naturally equivalent to) the full subcategory of $\mathcal{C}^J$ on the objects $(X_i,J(X_i))$, where $J(X_i)$ is seen as a right $J(X_i)$-module.
\end{proposition}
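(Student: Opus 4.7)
The plan is to construct an explicit functor $\Phi \colon \mathcal{C}^J_{\mathrm{ST}} \to \mathcal{C}^J$ landing in the stated full subcategory, and check it is an isomorphism of simplicial sets onto this subcategory. The key tool is the canonical natural isomorphism $\mathrm{Hom}_{A}(A, M) \cong M$ (right $A$-modules), sending a morphism $\varphi$ to $\varphi(1)$, applied with $A = J(X_i)$ and $M = J(f_{ij})$.

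On 0-simplices set $\Phi(X_i) = (X_i, J(X_i))$ with $J(X_i)$ viewed as a right module over itself; this is manifestly a bijection onto the objects of the full subcategory. On 1-simplices from $X_i$ to $X_j$, a 1-simplex of $\mathcal{C}^J$ between $(X_i, J(X_i))$ and $(X_j, J(X_j))$ is, using Remark \ref{rem:unwind-cj} together with the left unitor $J(X_j) \otimes_{J(X_j)} J(f_{ij}) \cong J(f_{ij})$, a pair $(f_{ij}, \varphi_{f_{ij}})$ with $\varphi_{f_{ij}} \colon J(X_i) \to J(f_{ij})$ a morphism of right $J(X_i)$-modules. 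By the isomorphism recalled above, such $\varphi_{f_{ij}}$ correspond bijectively to elements $v_{f_{ij}} := \varphi_{f_{ij}}(1) \in J(f_{ij})$, and I set $\Phi(f_{ij}, v_{f_{ij}}) = (f_{ij}, \varphi_{f_{ij}})$ with $\varphi_{f_{ij}}$ defined by this element.

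For 2-simplices, the content to verify is that the commutativity of diagram \eqref{eq:diagram-L-J}, with all $L_{X_\bullet} = J(X_\bullet)$, is equivalent to the pointed-bimodule condition $J(\Xi_{ijk})(v_{f_{jk}} \otimes v_{f_{ij}}) = v_{f_{ik}}$ appearing in Definition \ref{def:st-category}. This is a direct chase: evaluating both paths of \eqref{eq:diagram-L-J} on $1 \in J(X_i)$ and using the unitor isomorphisms, the top-right composite yields $v_{f_{ik}}$, while the left-then-bottom-then-right composite yields $J(\Xi_{ijk})(v_{f_{jk}} \otimes v_{f_{ij}})$; since a morphism of free rank-one right modules is determined by its value at $1$, the equivalence of the two conditions follows.

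Finally, for $k \geq 3$ the $k$-simplices on both sides are defined by the same recursive formula $\Delta^k = \Delta^k(\mathcal{C}) \times_{\partial \Delta^k(\mathcal{C})} \partial \Delta^k(-)$, so $\Phi$ extends uniquely and is a simplicial isomorphism onto the full subcategory in question. There is no serious obstacle here, as the argument is a direct unwinding of definitions; the only step requiring care is the translation of equation \eqref{eq:diagram-L-J} into the pointed-bimodule condition, where one must be attentive to keeping track of the unitor isomorphisms and of the fact that $\varphi_{f_{ij}} \leftrightarrow \varphi_{f_{ij}}(1)$ is natural in the target module.
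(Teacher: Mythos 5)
Your proposal is correct and takes essentially the same approach as the paper's own proof: both identify the simplices levelwise via the canonical isomorphism $\mathrm{Hom}_{\mathbf{Mod}_{J(X_i)}}(J(X_i),M)\cong M$, $\varphi\mapsto\varphi(1)$, translate the commutativity of diagram \eqref{eq:diagram-L-J} into the pointed-bimodule condition by evaluating at $1$, and observe that the simplices in degree $\geq 3$ agree by construction. (Only a cosmetic remark: the determining property you invoke is that a right-module map \emph{out of} the free rank-one module $J(X_i)$ is fixed by its value at $1$; the target $J(f_{ik})$ need not be free.)
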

\begin{proof}
A 1-morphism from $(X_i,J(X_i))$ to $(X_j,J(X_j))$ in  $\mathcal{C}^J$ is a pair $(f_{ij},\varphi_{f_{ij}})$ consisting of a 1-simplex $X_i\xrightarrow{f_{ij}} X_j$ in $\mathcal{C}$ and a morphism of right $J(X_i)$-modules 
$
\varphi_{f_{ij}}\colon J(X_i) \to J(X_j)\otimes_{J(X_j)}J(f_{ij})=J(f_{ij})
$. 
Via the canonical isomorphism of left $J(X_j)$-modules
\begin{align*}
    \mathrm{Hom}_{\mathbf{Mod}_{J(X_i)}}(J(X_i),J(f_{ij}))&\xrightarrow{\sim} J(f_{ij})\\
    \varphi&\mapsto \varphi(1),
\end{align*}
the morphism $\varphi_{f_{ij}}$ is equivalently an element $v_{f_{ij}}$ of $J(f_{ij})$, i.e., a pointing of $J(f_{ij})$. In other words, 1-simplices in  $\mathcal{C}^J$ with vertices $(X_i,J(X_i))$ and $(X_j,J(X_j))$ are exactly the 1-simplices in  $\mathcal{C}^J_{\mathrm{ST}}$ with vertices $X_i$ and $X_j$. Passing to 2-simplices, the commutativity of the diagram of morphisms of right $J(X_i)$-modules
\begin{equation*}
\begin{tikzcd}[column sep=large]
            J(X_i) \arrow[d,"{\varphi_{f_{ij}}}"']\arrow[r,"\varphi_{f_{ik}}"]&J(f_{ik})
            \\
            J(f_{ij})\arrow[r,"{\varphi_{f_{jk}}\otimes\mathrm{id}}"]&J(f_{jk})\otimes_{J(X_j)}J(f_{ij})\arrow[u," J(\Xi_{ijk})"']
            \end{tikzcd}
\end{equation*}
is equivalent to the single equation
$
\varphi_{f_{ij}}(1)=J(\Xi_{ijk})(\varphi_{f_{jk}}(1)\otimes \varphi_{f_{ij}}(1)),
$
i.e., to the single equation
$
v_{f_{ij}}=J(\Xi_{ijk})(v_{f_{jk}}\otimes v_{f_{ij}}).
$
Since the higher simplices in  $\mathcal{C}^J$ and in $\mathcal{C}^J_{\mathrm{ST}}$ are trivial, this concludes the proof. 
\end{proof}

\section{Linear representations of $\mathcal{C}^J_{\mathrm{ST}}$
}
{
By restricting along the full inclusion $\mathcal{C}^{J}_{\mathrm{ST}}\hookrightarrow \mathcal{C}^{J}$, 
linear representations of the category $\mathcal{C}^J$ induce linear representations of of the category $\mathcal{C}^{J}_{\mathrm{ST}}$. In this section we are going to characterize
 those 
 linear representations of  $\mathcal{C}^{J}_{\mathrm{ST}}$ that are obtained this way.}
As a first step we will define a notion of ``$J$ acting as scalars'' for a linear representation of ${\mathcal{C}}^{J}_{\mathrm{ST}}$ and will show that restriction to ${\mathcal{C}}^{J}_{\mathrm{ST}}$ induces a morphism
\[
\mathrm{Hom}_{\mathrm{Vect}_\K}(\mathcal{C}^{J},\mathrm{Vect}_\K)\to \{F\in \mathrm{Hom}(\mathcal{C}^{J}_{\mathrm{ST}},\mathrm{Vect}_\K) |\, J\text{ acts as scalars}\}.
\]
Next we will show that this is indeed an equivalence. This fact can be seen as a multi-object version of the Eilenberg--Watts theorem.

\begin{lemma}\label{lemma:map-of-monoids}
Let $F\colon \mathcal{C}^J_{\mathrm{ST}}\to \Vect_\K$ be a functor. Then for any object $X$ in $\mathcal{C}$ the map
\begin{align*}
    \lambda_X\colon J(X)&\to \mathrm{End}_\K(F(X))\\
    j&\mapsto F(\mathrm{id}_X,j)
\end{align*}
is a map of monoids.
\end{lemma}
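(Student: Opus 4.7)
The plan is to exhibit the monoid structure on $J(X)$ inside $\mathcal{C}^J_{\mathrm{ST}}$ itself, so that $\lambda_X$ becomes a map of monoids purely by functoriality of $F$. The key observation is that, by definition of the nerve of $2\Vect_\K$, the identity $1$-morphism on $X$ in $\mathcal{C}$ is sent by $J$ to the bimodule $J(\mathrm{id}_X) = J(X)$ (the algebra $J(X)$ regarded as a $(J(X),J(X))$-bimodule over itself). Consequently, a pointing of $J(\mathrm{id}_X)$ is just an element $j \in J(X)$, so the pair $(\mathrm{id}_X, j)$ is a well-defined endomorphism of $X$ in $\mathcal{C}^J_{\mathrm{ST}}$, and $\lambda_X(j) = F(\mathrm{id}_X, j)$ makes sense.

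First I would identify the composition law among these endomorphisms. Consider the degenerate $2$-simplex in $\mathcal{C}$
\[
\begin{tikzcd}
    & X \arrow[dr,"\mathrm{id}_X"]&\\
    X\arrow[ru,"\mathrm{id}_X"]\arrow[rr,"\mathrm{id}_X"']&\arrow[u,Leftarrow,shorten <=2mm, shorten >=2mm,"\mathrm{id}"']{}&X
\end{tikzcd}
\]
whose $2$-cell is the unitor. The image under $J$ of this $2$-cell is the multiplication morphism $\mu\colon J(X)\otimes_{J(X)} J(X) \to J(X)$, sending $j_2 \otimes j_1 \mapsto j_2 j_1$. By Definition \ref{def:st-category}, lifting this to $\mathcal{C}^J_{\mathrm{ST}}$ with pointings $j_1, j_2 \in J(X)$ on the two outer edges yields the $2$-simplex
\[
\begin{tikzcd}
    & X \arrow[dr,"(\mathrm{id}_X, j_2)"]&\\
    X\arrow[ru,"(\mathrm{id}_X, j_1)"]\arrow[rr,"(\mathrm{id}_X, j_2 j_1)"']&{}&X
\end{tikzcd}
\]
precisely because $J(\Xi)(j_2 \otimes j_1) = j_2 j_1$. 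Hence composition of endomorphisms of $X$ of the form $(\mathrm{id}_X, -)$ in $\mathcal{C}^J_{\mathrm{ST}}$ agrees with the multiplication in $J(X)$. Similarly, the identity morphism at $X$ in $\mathcal{C}^J_{\mathrm{ST}}$ is $(\mathrm{id}_X, 1_{J(X)})$, since this is the only pointing compatible with the identity $2$-simplex at $\mathrm{id}_X$.

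Now the conclusion is immediate from functoriality of $F$: we have
\[
\lambda_X(j_2 j_1) = F(\mathrm{id}_X, j_2 j_1) = F\bigl((\mathrm{id}_X, j_2)\circ (\mathrm{id}_X, j_1)\bigr) = F(\mathrm{id}_X, j_2)\circ F(\mathrm{id}_X, j_1) = \lambda_X(j_2)\circ \lambda_X(j_1),
\]
and
\[
\lambda_X(1_{J(X)}) = F(\mathrm{id}_X, 1_{J(X)}) = F(\mathrm{id}_{X, \mathcal{C}^J_{\mathrm{ST}}}) = \mathrm{id}_{F(X)}.
\]
Thus $\lambda_X$ is a morphism of monoids from $(J(X), \cdot, 1)$ to $(\mathrm{End}_\K(F(X)), \circ, \mathrm{id})$.

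There is essentially no obstacle here: the proof reduces to unwinding the definition of $\mathcal{C}^J_{\mathrm{ST}}$ for endomorphisms of a single object. The only minor subtlety worth verifying carefully is that the degenerate $2$-simplex at $\mathrm{id}_X$ really is sent by $J$ to the multiplication $\mu$ (rather than, say, $\mu$ composed with a swap), which is dictated by the convention in Remark \ref{rem:Z-explicit} that $J(f_{ij})$ is a left $J(X_j)$– and right $J(X_i)$–module and by the order of tensor factors in the $2$-cell condition $v_{f_{ik}} = J(\Xi_{ijk})(v_{f_{jk}}\otimes v_{f_{ij}})$. This fixes the orientation so that the identification of composition with multiplication is on the nose.
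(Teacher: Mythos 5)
Your proof is correct and follows essentially the same route as the paper's: identify endomorphisms of $X$ in $\mathcal{C}^J_{\mathrm{ST}}$ covering $\mathrm{id}_X$ with pointings of the bimodule $J(\mathrm{id}_X)=J(X)$, observe that the degenerate $2$-simplex at $\mathrm{id}_X$ is sent by $J$ to the multiplication of $J(X)$, and conclude by functoriality of $F$. The only difference is that you also verify preservation of the unit, which the paper's proof leaves implicit; that is a harmless (indeed welcome) addition.
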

\begin{proof}
We begin by showing that $\lambda_X$ is indeed well defined. Since $J(\mathrm{id}_X)$ is the $(J(X),J(X))$ bimodule $J(X)$, an endomorphism of $X$ in $\mathcal{C}^J_{\mathrm{ST}}$ covering $\mathrm{id}_X\colon X\to X$ is a pointing of $J(X)$. Therefore we have that
$
X\xrightarrow{(\mathrm{id}_X,j)}X
$
 is a morphism in $\mathcal{C}^J_{\mathrm{ST}}$ for any $j\in J(X)$, and so 
$
F(X)\xrightarrow{F(\mathrm{id}_X,j)}F(X)
$
is a morphism in $\Vect_\K$. 
Since the natural isomorphism $J(X)\otimes_{J(X)}J(X)\to J(X)$ corresponding to the identity $\mathrm{id}_X\circ\mathrm{id}_X=\mathrm{id}_X$ is the multiplication of $J(X)$, the
 2-simplex 
\[
 \begin{tikzcd}
            & {X} \arrow[dr,"{\mathrm{id}_X}"]&\\
            {X}\arrow[ru,"{\mathrm{id}_X}"]\arrow[rr,"{\mathrm{id}_X}"']&\arrow[u,Leftarrow,shorten <=2mm, shorten >=2mm,"{\mathrm{id}}"']{}&{X}
            \end{tikzcd}.
\]
of $\mathcal{C}$ and the triple of 1-simplices 
 \[
     X\xrightarrow{({\mathrm{id}_X},j_1)}X, \qquad\qquad\qquad
     \\
     X\xrightarrow{({\mathrm{id}_X},j_2)}X, \qquad\qquad\qquad
     \\
     X\xrightarrow{({\mathrm{id}_X},j_1j_2)}X
 \]
 in $\mathcal{C}^J_{\mathrm{ST}}$ are such that $
 J(\mathrm{id})(j_1\otimes j_2)=j_1 j_2$.
So we have the 
2-simplex
        \[
 \begin{tikzcd}
            & {X} \arrow[dr,"{(\mathrm{id}_X,j_1)}"]&\\
            {X}\arrow[ru,"{(\mathrm{id}_X,j_2)}"]\arrow[rr,"{(\mathrm{id}_X,j_1j_2)}"']&\arrow[u,Leftarrow,shorten <=2mm, shorten >=2mm,"\mathrm{id}"']{}&{X}
            \end{tikzcd}.
\]
in $\mathcal{C}^J_{\mathrm{ST}}$. Applying $F$ to this 2-simplex we find the identity 
$
F(\mathrm{id}_X,j_1j_2)=F(\mathrm{id}_X,j_1)\circ F(\mathrm{id}_X,j_2)$,
i.e., the identity
$\lambda_X(j_1j_2)=\lambda_X(j_1)\circ \lambda_X(j_2)$.
\end{proof}
\begin{definition}
Let $F\colon \mathcal{C}^J_{\mathrm{ST}}\to \Vect_\K$ be a functor. We say that $F$ is such that $J$ \emph{acts as scalars} if the map $\lambda_X$ from Lemma \ref{lemma:map-of-monoids} is a map of $\K$-algebras, for any object $X$ of $\mathcal{C}$. Equivalently, $J$ acts as scalars if $\lambda_X$ makes $F(X)$ a left $J(X)$-module, for any object $X$ of $\mathcal{C}$.
\end{definition}

\begin{lemma}
Let   $E\colon \mathcal{C}^J\to \Vect_\K$ be a $\Vect_\K$-linear functor which is  additive and cocontinuous over $\mathcal{C}$, and let $E_{\mathrm{ST}}\colon \mathcal{C}^J_{\mathrm{ST}}\to \Vect_\K$ be the restriction of $E$ to $\mathcal{C}^J_{\mathrm{ST}}$. Then $E_{\mathrm{ST}}$ is such that $J$ acts as scalars.  
\end{lemma}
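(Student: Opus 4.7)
The plan is to show that, for each object $X$ of $\mathcal{C}$, the monoid homomorphism $\lambda_X$ produced in Lemma \ref{lemma:map-of-monoids} is additionally $\K$-linear; this promotes it to a $\K$-algebra map, which is exactly the assertion that $J$ acts as scalars.

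First I would translate the statement through Proposition \ref{prop:st-category}, which realizes $\mathcal{C}^J_{\mathrm{ST}}$ as the full subcategory of $\mathcal{C}^J$ on objects of the form $(X,J(X))$. Under the correspondence recalled in the proof of that proposition, the endomorphism $(\mathrm{id}_X,j)$ in $\mathcal{C}^J_{\mathrm{ST}}$ lifts to the endomorphism $(\mathrm{id}_X,\tilde\varphi_j)$ of $(X,J(X))$ in $\mathcal{C}^J$, where $\tilde\varphi_j\colon J(X)\to J(X)$ is the right $J(X)$-linear map sending $a\mapsto j\cdot a$. Consequently
\[
\lambda_X(j)=E_{\mathrm{ST}}(\mathrm{id}_X,j)=E(\mathrm{id}_X,\tilde\varphi_j)=E\bigr\vert_X(\tilde\varphi_j),
\]
where $E\bigr\vert_X\colon \mathrm{Mod}_{J(X)}\to\Vect_\K$ is the restriction from Remark \ref{rem:towards-EW}. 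Thus $\lambda_X$ factors as the canonical $\K$-algebra isomorphism $J(X)\xrightarrow{\sim}\mathrm{End}_{\mathrm{Mod}_{J(X)}}(J(X))$, $j\mapsto\tilde\varphi_j$, followed by the map on endomorphisms induced by $E\bigr\vert_X$.

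Since the first factor is already a $\K$-algebra map, it suffices to verify that
\[
E\bigr\vert_X\colon \mathrm{End}_{\mathrm{Mod}_{J(X)}}(J(X))\to \mathrm{End}_\K\bigl(E(X,J(X))\bigr)
\]
is $\K$-linear. Additivity of $E$ over $\mathcal{C}$ (Definition \ref{def:Vect-linear-cocontinuous}) gives $\mathbb{Z}$-linearity of this hom-map, so it remains to verify $\K$-homogeneity. This is exactly what the $\Vect_\K$-linearity hypothesis on $E$ delivers: the $\Vect_\K$-tensoring of $\mathcal{C}^J$ from Remark \ref{rem:tensored-over-vect} is compatible with scalar multiplication of morphisms in $\mathrm{Mod}_{J(X)}$, so $E\bigr\vert_X(c\varphi)=c\cdot E\bigr\vert_X(\varphi)$ for $c\in\K$ and $\varphi\in\mathrm{End}_{\mathrm{Mod}_{J(X)}}(J(X))$.

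The only step that requires any care is the last one, namely extracting $\K$-homogeneity of hom-maps from the abstract statement of $\Vect_\K$-linearity: one can either argue directly by representing $c\cdot\mathrm{id}_{J(X)}$ as the action of the vector space $\K$ on $J(X)$ via the tensoring, or invoke the standard fact that a $\Vect_\K$-linear additive functor between $\Vect_\K$-tensored additive categories is $\Vect_\K$-enriched. Once $K$-linearity of $\lambda_X$ is established, combining with Lemma \ref{lemma:map-of-monoids} yields that $\lambda_X$ is a $\K$-algebra homomorphism, so $J$ acts as scalars on $E_{\mathrm{ST}}(X)$; as $X$ was arbitrary, this concludes the proof.
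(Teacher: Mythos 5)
Your proof is correct, but it takes a genuinely different route from the paper's. The paper's own argument invokes the Eilenberg--Watts theorem: since $E\vert_X$ is additive and cocontinuous, it is naturally isomorphic to $-\otimes_{J(X)}V_X$ for some left $J(X)$-module $V_X$, after which $E_{\mathrm{ST}}(X)$ is identified with $V_X$ and $\lambda_X(j)=j\cdot(-)$ is visibly the structure map of that left module, hence a $\K$-algebra homomorphism. You instead bypass Eilenberg--Watts entirely: you factor $\lambda_X$ through the canonical $\K$-algebra isomorphism $J(X)\cong\mathrm{End}_{\mathrm{Mod}_{J(X)}}(J(X))$, $j\mapsto\tilde\varphi_j$, and verify the algebra-map axioms for the induced map on endomorphism algebras one by one --- multiplicativity and unitality from functoriality (which is essentially Lemma \ref{lemma:map-of-monoids}), additivity from the additivity hypothesis, and $\K$-homogeneity from $\Vect_\K$-linearity. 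Your route is more elementary and in fact never uses cocontinuity, so it establishes the conclusion under a strictly weaker hypothesis; the paper's route is shorter given that Eilenberg--Watts is deployed repeatedly in the same section anyway, and it produces the explicit module $V_X$ that reappears in the subsequent identifications. The one step you rightly single out --- extracting $\K$-homogeneity of the hom-maps from the tensoring form of $\Vect_\K$-linearity in Remark \ref{rem:tensored-over-vect} --- is legitimate, and is exactly the consequence of $\Vect_\K$-linearity that the paper itself relies on in the proof of Lemma \ref{lemma:equivariant}.
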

\begin{proof}
Let $X$ be an object of $\mathcal{C}$. Since $E\bigr\vert_X\colon \mathrm{Mod}_{J(X)}\to \Vect_\K$ is  additive and cocontinuous, by the Eilenberg--Watts theorem, we have that 
$
E\bigr\vert_X\cong -\otimes_{J(X)}V_X
$
for some left $J(X)$-module $V_X$. We therefore have 
$
E_{\mathrm{ST}}(X)=E(X,J(X))=E(\iota_X(J(X))=E\bigr\vert_X(J(X))=V_X,
$
and $E_{\mathrm{ST}}(\mathrm{id}_X,j)$ is the morphism of $\K$-vector spaces given by
\begin{align*}
V_X\cong J(X)\otimes_{J(X)}V_X&\xrightarrow{j\otimes \mathrm{id}_{V_X}}J(X)\otimes_{J(X)}V_X\cong V_X\\
v\mapsto 1\otimes v&\mapsto j\otimes v\mapsto j\cdot v,
\end{align*}
where we used that left multiplication by an element $j\in J(X)$ is naturally a morphism of right $J(X)$-modules from $J(X)$ to itself.
The map $\lambda_X$ from Lemma \ref{lemma:map-of-monoids} is then $
j\mapsto j\cdot-$ 
and this is a map of $\K$-algebras from $J(X)$ to $\mathrm{End}_\K(V_X)$.
\end{proof}
Summing up, writing
$\mathrm{Hom}_{\K}(\mathcal{C}^{J}_{\mathrm{ST}},\mathrm{Vect}_\K)$
to denote the subset of $\mathrm{Hom}(\mathcal{C}^{J}_{\mathrm{ST}},\mathrm{Vect}_\K)$ consisting of those linear representations of $\mathcal{C}^{J}_{\mathrm{ST}}$ such that {$J$} acts as scalars, we have proved that the restriction along the full embedding $\mathcal{C}^{J}_{\mathrm{ST}}\hookrightarrow \mathcal{C}^{J}$ induces a morphism $
\mathrm{Hom}_{\mathrm{Vect}_\K}(\mathcal{C}^{J},\mathrm{Vect}_\K)\to \mathrm{Hom}_{\K}(\mathcal{C}^{J}_{\mathrm{ST}},\mathrm{Vect}_\K)$.

\subsection{A multiobject  Eilenberg-Watts type theorem}\label{sec:more-general-EW}
By Proposition \ref{prop:anom-to-lin} we have a distinguished map from
anomalous representations of $\mathcal{C}$ with anomaly $J$
to $\mathrm{Hom}_{\mathrm{Vect}_\K}(\mathcal{C}^{J},\mathrm{Vect}_\K)$
and by the results in the previous section we have a restriction functor
$
\mathrm{Hom}_{\mathrm{Vect}_\K}(\mathcal{C}^{J},\mathrm{Vect}_\K)\to \mathrm{Hom}_{\K}(\mathcal{C}^{J}_{\mathrm{ST}},\mathrm{Vect}_\K)$.
In this section we will show that there is also a natural functor bringing us back, from
$
\mathrm{Hom}_{\K}(\mathcal{C}^{J}_{\mathrm{ST}},\mathrm{Vect}_\K)$ to anomalous representations of $\mathcal{C}$ with anomaly $J$,
establishing a commuting triple of equivalences
\begin{equation*}
            \begin{tikzcd}[column sep=-2em]
               {\mathrm{Hom}_{\mathrm{Vect}_\K}(\mathcal{C}^{J},\mathrm{Vect}_\K)}\arrow[rr]\arrow[dr]&& {\mathrm{Hom}_{\K}(\mathcal{C}^{J}_{\mathrm{ST}},\mathrm{Vect}_\K)}\arrow[dl]\arrow[ll]\\
                {}&\{\text{anomalous representations of $\mathcal{C}$ with anomaly $J$}\}\arrow[ur]\arrow[ul].&{}
            \end{tikzcd}
        \end{equation*}
{The proof of this result, given as Theorem \ref{thm:equivalences} below, will take the whole remainder of this section and will suggest regarding the above commutative triple of equivalences as a multi-object  analogue of the Eilenberg--Watts theorem on additive and cocontinuous functors
between categories of modules. }

\begin{lemma}\label{lemma:ZF}
 Let $J\colon \mathcal{C}\to 2\Vect_\K$ be a functor and let $F\in \mathrm{Hom}_{\K}(\mathcal{C}^{J}_{\mathrm{ST}},\mathrm{Vect}_\K)$. Then there is an anomalous representation $Z_F$ of $\mathcal{C}$ with anomaly $J$,  
 \begin{equation*}
            \begin{tikzcd}
            \mathcal{C}
                \arrow[rr, "J"]\arrow[dr]&{}&
                2\mathrm{Vect}_\K\\
                {}&\ast\arrow[u,shorten <=3pt,shorten > =1pt,Rightarrow,"Z_{F}"',pos=.5]\ar[ur]&{}\\
            \end{tikzcd},
        \end{equation*}
with $Z_F(X)=F(X)$, for any object $X$ in $X$.        
\end{lemma}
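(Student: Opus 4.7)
The plan is to construct $Z_F$ by specifying its data as an anomalous representation in the form of Remark \ref{rem:Z-explicit}, and to verify the coherence diagram \eqref{eq:diagram:Z-J} by applying $F$ to carefully chosen $2$-simplices of $\mathcal{C}^J_{\mathrm{ST}}$.

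On objects, I set $Z_F(X):=F(X)$. Since $J$ acts as scalars on $F$, the map $\lambda_X\colon J(X)\to\mathrm{End}_\K(F(X))$ of Lemma \ref{lemma:map-of-monoids} endows $F(X)$ with the structure of a left $J(X)$-module, hence of a $(J(X),\K)$-bimodule as required. On a $1$-morphism $f_{ij}\colon X_i\to X_j$ of $\mathcal{C}$, each pointing $v\in J(f_{ij})$ yields a $1$-simplex $(f_{ij},v)$ of $\mathcal{C}^J_{\mathrm{ST}}$ and hence a $\K$-linear map $F(f_{ij},v)\colon F(X_i)\to F(X_j)$. I would then show that the $\K$-bilinear assignment $(v,u)\mapsto F(f_{ij},v)(u)$ descends to a morphism of left $J(X_j)$-modules
\[
Z_F(f_{ij})\colon J(f_{ij})\otimes_{J(X_i)}F(X_i)\longrightarrow F(X_j).
\]

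The two pieces of equivariance needed come from applying $F$ to the natural $2$-simplices of $\mathcal{C}^J_{\mathrm{ST}}$ associated with the compositions $\mathrm{id}_{X_j}\circ f_{ij}=f_{ij}$ and $f_{ij}\circ\mathrm{id}_{X_i}=f_{ij}$. The former, with decorations $(f_{ij},v)$, $(\mathrm{id}_{X_j},b)$, $(f_{ij},b\cdot v)$, is sent by $F$ to $F(f_{ij},b\cdot v)=\lambda_{X_j}(b)\circ F(f_{ij},v)$, giving left $J(X_j)$-linearity of $v\mapsto F(f_{ij},v)$. The latter, with decorations $(\mathrm{id}_{X_i},a)$, $(f_{ij},v)$, $(f_{ij},v\cdot a)$, is sent to $F(f_{ij},v\cdot a)=F(f_{ij},v)\circ\lambda_{X_i}(a)$, which is exactly the $J(X_i)$-balancing condition required to factor through $\otimes_{J(X_i)}$. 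Together these produce $Z_F(f_{ij})$.

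Finally, for a $2$-simplex $(\Xi_{ijk},f_{ij},f_{jk},f_{ik})$ in $\mathcal{C}$ and any pointings $v_{ij}\in J(f_{ij})$, $v_{jk}\in J(f_{jk})$, the element $v_{ik}:=J(\Xi_{ijk})(v_{jk}\otimes v_{ij})\in J(f_{ik})$ defines a genuine $2$-simplex in $\mathcal{C}^J_{\mathrm{ST}}$, whose image under $F$ is the identity $F(f_{ik},v_{ik})=F(f_{jk},v_{jk})\circ F(f_{ij},v_{ij})$. Evaluating the two paths of \eqref{eq:diagram:Z-J} on the elementary tensor $v_{jk}\otimes v_{ij}\otimes u$ produces exactly the two sides of this equation, so commutativity follows on elementary tensors and, by $\K$-linearity, on the whole tensor product. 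Higher simplex data is automatic because an anomalous representation is determined by its data up to $2$-simplices. The main obstacle is the $1$-morphism step: one has to extract the full $(J(X_j),J(X_i))$-bimodule equivariance of $v\mapsto F(f_{ij},v)$ from the rather restricted $2$-simplex functoriality of $F$ on the two degenerate compositions above. Once this is in place, both the well-definedness of $Z_F(f_{ij})$ and the commutativity of \eqref{eq:diagram:Z-J} reduce to rewriting a single equation coming from a $2$-simplex in $\mathcal{C}^J_{\mathrm{ST}}$.
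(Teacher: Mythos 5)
Your proposal is correct and follows essentially the same route as the paper's proof: define $Z_F$ on objects via the $J$-acts-as-scalars module structure, on morphisms by $j_{f_{ij}}\otimes v\mapsto F(f_{ij},j_{f_{ij}})(v)$, extract balancedness and left $J(X_j)$-linearity from the two degenerate $2$-simplices over $f_{ij}\circ\mathrm{id}_{X_i}$ and $\mathrm{id}_{X_j}\circ f_{ij}$, and verify the coherence square on elementary tensors using the $2$-simplex with pointing $J(\Xi_{ijk})(v_{jk}\otimes v_{ij})$. These are exactly the steps the paper carries out.
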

\begin{proof}
   By Remark \ref{rem:Z-explicit}, to define $Z_F$ we need to provide
 \begin{itemize}
   \item a left $J(X_i)$-module $Z_F(X_i)$, thought of as a $(J(X_i),\K)$-bimodule, for every object (0-simplex) $X_i$ of $\mathcal{C}$;  
   \item  a morphism of left $J(X_j)$-modules 
\[
Z_F(f_{ij})\colon J(f_{ij})\otimes_{J(X_i)} Z_F(X_i)\to Z_F(X_j),
\]
for any morphism $f_{ij}\colon X_i\to X_j$ in $\mathcal{C}$;
\end{itemize}
such that the diagram of morphisms of left $J(X_k)$-modules
\begin{equation}\label{eq:diagram:Z-J-F}
\begin{tikzcd}[column sep=large]
            J(f_{jk})\otimes_{J(X_j)}J(f_{ij})\otimes Z_F(X_i) \arrow[d,"J(\Xi_{ijk})\otimes{\mathrm{id}}"']\arrow[r,"\mathrm{id}\otimes Z_F(f_{ij})"]&J(f_{jk})\otimes_{J(X_j)}Z_F(X_j)
            \arrow[d,"Z_F(f_{jk})"]\\
            J(f_{ik})\otimes_{J(X_i)} Z_F(X_i)\arrow[r,"Z_F(f_{ik})"]&Z(X_k)
            \end{tikzcd}
\end{equation}
commutes, for any  2-simplex 
\[
 \begin{tikzcd}
            & {X_j} \arrow[dr,"{f_{jk}}"]&\\
            {X_i}\arrow[ru,"{f_{ij}}"]\arrow[rr,"{f_{ik}}"']&\arrow[u,Leftarrow,shorten <=2mm, shorten >=2mm,"\Xi_{ijk}"']{}&{X_k}
            \end{tikzcd}.
\]
of $\mathcal{C}$.
We set $Z_F(X_i)=F(X_i)$, with the left $J(X_i)$-module structure given by the fact that $F$ is an element in  $\mathrm{Hom}_{\K}(\mathcal{C}^{J}_{\mathrm{ST}},\mathrm{Vect}_\K)$. Next, to define the morphism of left $J(X_j)$-modules 
$
Z_F(f_{ij})\colon J(f_{ij})\otimes_{J(X_i)} F(X_i)\to F(X_j)
$.
we notice that, 
for any fixed $f_{ij}\colon X_i\to X_j$, for any element $j_{f_{ij}}\in J(f_{ij})$ we have a morphism $(f_{ij},j_{f_{ij}})$ from $X_i$ to $X_j$ in $\mathcal{C}^J_{\mathrm{ST}}$. Then we define
$
Z_F(f_{ij})(j_{f_{ij}}\otimes v_i)=F(f_{ij},j_{f_{ij}})(v_i)$.
We have to check that this is well defined, i.e., that,
\begin{equation}\label{eq:first-to-check}
Z_F(f_{ij})(j_{f_{ij}}\cdot j_{X_i}\otimes v_i)=Z_F(f_{ij})(j_{f_{ij}} \otimes j_{X_i}\cdot v_i)
\end{equation}
for any $j_{X_i}\in J(X_i)$, and that $Z_F(f_{ij})$ is a morphism of left $J(X_j)$-modules, i.e., 
\begin{equation}\label{eq:second-to-check}
Z_F(f_{ij})(j_{X_j}\cdot j_{f_{ij}}\otimes v_i)=j_{X_j}\cdot Z_F(f_{ij})(j_{f_{ij}} \otimes  v_i),
\end{equation}
for any $j_{X_j}\in J(X_j)$.
To prove \eqref{eq:first-to-check}, we use the definition of the $J(X_i)$-module structure on $F(X_i)$  to compute
\begin{align*}
  Z_F(f_{ij})(j_{f_{ij}} \otimes j_{X_i}\cdot v_i)&= F(f_{ij},j_{f_{ij}})(j_{X_i}\cdot v_i)\\
  &=(F(f_{ij},j_{f_{ij}})\circ F(\mathrm{id}_{X_i},j_{X_i}))(v_i)\\
  &=F(f_{ij},j_{f_{ij}}\cdot j_{X_i} )(v_i)\\
  &=Z_F(f_{ij})(j_{f_{ij}}\cdot j_{X_i}\otimes v_i),
\end{align*}
where in the next to last step we used that we have the 2-simplex
 \[
 \begin{tikzcd}
            & {X_i} \arrow[dr,"{(f_{ij},j_{f_{ij}}})"]&\\
            {X_i}\arrow[ru,"{(\mathrm{id}_{X_i},j_{X_j})}"]\arrow[rr,"{(f_{ij},j_{X_j}\cdot j_{f_{ij}})}"']&\arrow[u,Leftarrow,shorten <=2mm, shorten >=2mm,"\mathrm{id}"']{}&{X_j}
            \end{tikzcd}.
\]
in $\mathcal{C}^J_{\mathrm{ST}}$.
Similarly, to prove \eqref{eq:second-to-check}, we use the definition of the $J(X_j)$-module structure on $F(X_j)$ to compute
\begin{align*}
j_{X_j}\cdot Z_F(f_{ij})(j_{f_{ij}} \otimes  v_i)&=F(\mathrm{id}_{X_j},j_{X_j})Z_F(f_{ij})(j_{f_{ij}} \otimes  v_i)\\
&=(F(\mathrm{id}_{X_j},j_{X_j})\circ F(f_{ij},j_{f_{ij}}))(v_i)\\
&=F(f_{ij},j_{x_j}\cdot j_{f_{ij}})(v_i)\\
&=Z_F(f_{ij})(j_{X_j}\cdot j_{f_{ij}}\otimes v_i),
\end{align*}
where in the next to last step we used that we have the 2-simplex
 \[
 \begin{tikzcd}
            & {X_j} \arrow[dr,"{(\mathrm{id}_{X_j},j_{X_j})}"]&\\
            {X_i}\arrow[ru,"{(f_{ij},j_{f_{ij}})}"]\arrow[rr,"{(f_{ij},j_{X_j}\cdot j_{f_{ij}})}"']&\arrow[u,Leftarrow,shorten <=2mm, shorten >=2mm,"\mathrm{id}"']{}&{X_j}
            \end{tikzcd}.
\]
in $\mathcal{C}^J_{\mathrm{ST}}$. Finally, we verify that the diagram \eqref{eq:diagram:Z-J-F} commutes. We have
\begin{align*}
    (Z_F(f_{jk})\circ (\mathrm{id}_{J(f_{jk})}\otimes Z_F(f_{ij})))(j_{f_{jk}}\otimes j_{f_{ij}}\otimes v_i)&=Z_F(f_{jk}(j_{f_{jk}}\otimes Z_F(f_{ij})))(j_{f_{ij}}\otimes v_i)\\
    &=Z_F(f_{jk}(j_{f_{jk}}\otimes F(f_{ij},j_{f_{ij}})(v_i))\\
    &=F(f_{jk},j_{f_{jk}})(F(f_{ij},j_{f_{ij}})(v_i)))\\
    &=(F(f_{jk},j_{f_{jk}})\circ F(f_{ij},j_{f_{ij}}))(v_i).
\end{align*}
Let $j_{f_{ik}}$ be the element of $J(f_{ik})$ defined by $j_{f_{ik}}=J(\Xi_{ijk})(j_{f_{jk}}\otimes j_{f_{ij}})$. Then 
\[
 \begin{tikzcd}
            & {X_j} \arrow[dr,"{(f_{jk},j_{f_{jk}})}"]&\\
            {X_i}\arrow[ru,"{(f_{ij},j_{f_{ij}})}"]\arrow[rr,"{(f_{ik},j_{f_{ik}})}"']&\arrow[u,Leftarrow,shorten <=2mm, shorten >=2mm,"\Xi_{ijk}"']{}&{X_k}
            \end{tikzcd}
\]
is a 2-simplex in $\mathcal{C}^J_{\mathrm{ST}}$ and so
\[
 \begin{tikzcd}
            & {F(X_j)} \arrow[dr,"{F(f_{jk},j_{f_{jk}})}"]&\\
            {F(X_i)}\arrow[ru,"{F(f_{ij},j_{f_{ij}})}"]\arrow[rr,"{F(f_{ik},j_{f_{ik}})}"']&{}&{F(X_k)}
            \end{tikzcd}
\]
is a 2-simplex in $\Vect_\K$, i.e., 
$
F(f_{jk},j_{f_{jk}})\circ F(f_{ij},j_{f_{ij}})=F(f_{ik},j_{f_{ik}})
$. 
We then conclude by computing
\begin{align*}
    (Z_F(f_{ik})\circ (J(\Xi_{ijk})\otimes \mathrm{id}){Z_F(X_i)}))(j_{f_{jk}}\otimes j_{f_{ij}}\otimes v_i)&=Z_F(f_{ik})(J(\Xi_{ijk})(j_{f_{jk}}\otimes j_{f_{ij}})\otimes v_i)\\
    &=Z_F(f_{ik})(j_{f_{ik}}\otimes v_i)\\
    &=F(f_{ik},j_{f_{ik}})(v_i).
\end{align*}
\end{proof}

\begin{proposition}
The correspondence
\begin{align*}
\mathrm{Hom}_{\K}(\mathcal{C}^{J}_{\mathrm{ST}},\mathrm{Vect}_\K)&\to 
\{\text{anomalous representations of $\mathcal{C}$ with anomaly $J$}\}\\
F&\mapsto Z_F
\end{align*}
from Lemma \ref{lemma:ZF} is the inverse of the composition
\begin{align*}
\{\text{anom. representations of $\mathcal{C}$ with anomaly $J$}\}
&\to
\mathrm{Hom}_{\mathrm{Vect}_\K}(\mathcal{C}^{J},\mathrm{Vect}_\K)
\to \mathrm{Hom}_{\K}(\mathcal{C}^{J}_{\mathrm{ST}},\mathrm{Vect}_\K)\\
Z&\mapsto E_Z\mapsto (E_Z)_{\mathrm{ST}},
\end{align*}
where $Z\mapsto E_Z$ is the construction from Proposition \ref{prop:anom-to-lin}, and $(-)_{\mathrm{ST}}$ is the restriction to the full subcategory $\mathcal{C}^J_{\mathrm{ST}}$ of $\mathcal{C}^J$. In other words, the triangle
\begin{equation}\label{eq:triangle}
            \begin{tikzcd}[column sep=-3.5em]
               {\mathrm{Hom}_{\mathrm{Vect}_\K}(\mathcal{C}^{J},\mathrm{Vect}_\K)}\arrow[rr]&& {\mathrm{Hom}_{\K}(\mathcal{C}^{J}_{\mathrm{ST}},\mathrm{Vect}_\K)}\arrow[dl]\\
                {}&\{\text{anomalous representations of $\mathcal{C}$ with anomaly $J$}\}\arrow[ul].&{}
            \end{tikzcd}
        \end{equation}
is the identity on $\mathrm{Hom}_{\K}(\mathcal{C}^{J}_{\mathrm{ST}},\Vect_\K)$.       
\end{proposition}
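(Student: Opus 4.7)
The plan is to show directly that the round-trip $F\mapsto Z_F\mapsto E_{Z_F}\mapsto (E_{Z_F})_{\mathrm{ST}}$ returns the functor $F$ up to the canonical unitor isomorphisms of the $J(X)$-actions. Because $\mathcal{C}^J_{\mathrm{ST}}$ has trivial $k$-simplices for $k\geq 3$ by Definition \ref{def:st-category}, it suffices to verify the identification simplex by simplex in dimensions zero, one, and two.

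First I would check 0-simplices. By Proposition \ref{prop:st-category}, restriction to $\mathcal{C}^{J}_{\mathrm{ST}}$ sends an object $X$ to $(X,J(X))$, and formula \eqref{eq:EZ2-obj} combined with the prescription $Z_F(X)=F(X)$ from Lemma \ref{lemma:ZF} yields
\[
(E_{Z_F})_{\mathrm{ST}}(X)=E_{Z_F}(X,J(X))=J(X)\otimes_{J(X)}F(X).
\]
The canonical unitor $\eta_X\colon J(X)\otimes_{J(X)}F(X)\xrightarrow{\sim} F(X)$, $j\otimes v\mapsto j\cdot v$, furnishes a natural isomorphism to $F(X)$, where the $J(X)$-module structure on $F(X)$ is the one supplied by Lemma \ref{lemma:map-of-monoids} (which is where the hypothesis that $\K$ acts as scalars is invoked). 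Next I would address 1-simplices: by Proposition \ref{prop:st-category}, a morphism $(f_{ij},v_{f_{ij}})$ of $\mathcal{C}^{J}_{\mathrm{ST}}$ corresponds to $(f_{ij},\varphi_{f_{ij}})$ in $\mathcal{C}^{J}$ with $\varphi_{f_{ij}}(1)=v_{f_{ij}}$; applying formula \eqref{eq:EZ2-morph} to the element $1\otimes v_i\in J(X_i)\otimes_{J(X_i)}F(X_i)$ and invoking the defining formula $Z_F(f_{ij})(j\otimes v)=F(f_{ij},j)(v)$ of Lemma \ref{lemma:ZF} gives the element $1\otimes F(f_{ij},v_{f_{ij}})(v_i)$ in $J(X_j)\otimes_{J(X_j)}F(X_j)$. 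Conjugation by $\eta_{X_i}$ and $\eta_{X_j}$ identifies this morphism with $F(f_{ij},v_{f_{ij}})$, so the 1-simplex check both establishes agreement and, simultaneously, exhibits $\eta=\{\eta_X\}_X$ as a natural transformation.

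For 2-simplices no further computation is needed: by Definition \ref{def:st-category} a 2-simplex of $\mathcal{C}^{J}_{\mathrm{ST}}$ is a 2-simplex of $\mathcal{C}$ together with a compatible triple of pointings satisfying $J(\Xi_{ijk})(v_{f_{jk}}\otimes v_{f_{ij}})=v_{f_{ik}}$ with no scalar twist, so both $F$ and $(E_{Z_F})_{\mathrm{ST}}$ send it to a strictly commutative triangle in $\Vect_\K$; agreement on the three 1-simplex edges therefore propagates to agreement on the 2-simplex automatically. The main piece of care, and arguably the only place where one can trip, is in making sure that the $J(X)$-module structure induced on $F(X)$ via $\lambda_X$ (Lemma \ref{lemma:map-of-monoids}) is literally the one that makes the unitor $\eta_X$ well-defined and natural; this is transparent once one recalls that $Z_F(X)$ was defined in Lemma \ref{lemma:ZF} using exactly this module structure. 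With $\eta$ in hand the triangle \eqref{eq:triangle} is the identity, in the homotopical sense appropriate to the $\infty$-categorical set-up.
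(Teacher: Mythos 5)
Your proposal is correct and follows essentially the same route as the paper's own proof: identify $(E_{Z_F})_{\mathrm{ST}}$ with $F$ on objects via the unitor $J(X)\otimes_{J(X)}F(X)\cong F(X)$, on 1-morphisms via the correspondence $\varphi_{v_{f_{ij}}}(1)=v_{f_{ij}}$ together with the defining formula $Z_F(f_{ij})(j\otimes v)=F(f_{ij},j)(v)$, and dispose of 2-simplices by noting the target is a 1-category. Your extra care in tracking the unitors as explicit natural isomorphisms (rather than silent identifications) is a harmless refinement of the same argument.
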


\begin{proof}
Let $X_i$ be an object in $\mathcal{C}^J_{\mathrm{ST}}$. Then, by Proposition \ref{prop:anom-to-lin},
\[
(E_{Z_F})_{\mathrm{ST}}(X_i)=E_{Z_F}(X_i,J(X_i))=J(X_i)\otimes_{J(X_i)}Z_F(X_i)=F(X_i),
\]
where in the last step we used that, by Lemma \ref{lemma:ZF},  $Z_F(X_i)$ is the $\K$-vector space $F(X_i)$ endowed with a certain left $J(X_i)$-module structure. Let now $(f_{ij},j_{f_{ij}})\colon X_i\to X_j$ be a morphism in $\mathcal{C}^J_{\mathrm{ST}}$. We have $(E_{Z_F})_{\mathrm{ST}}(f_{ij},j_{f_{ij}})=E_{Z_F}(f_{ij},\varphi_{j_{f_{ij}}})$, where $\varphi_w\colon J(X_i)\to J(X_j)\otimes_{J(X_j)}J(f_{ij})=J(f_{ij})$ is the unique morphism of left $J(X_i)$ modules with $\varphi_w(1)=w$.
By Proposition \ref{prop:anom-to-lin}, the morphism $E_{Z_F}(f_{ij},\varphi_{j_{f_{ij}}})$ is the composition
\begin{align*}
J(X_i)\otimes_{J(X_i)} Z_F(X_i) \xrightarrow{\varphi_{j_{f_{ij}}}\otimes \mathrm{id}_{Z_F(X_i)}}&
J(X_j)\otimes_{J(X_j)}J(f_{ij})\otimes_{J(X_i)} Z_F(X_i)\\
&\qquad\xrightarrow{\mathrm{id}_{J(X_j)}\otimes Z_F(f_{ij})} J(X_j)\otimes_{J(X_j)}Z_F(X_j).
\end{align*}
The source and target of this morphism are canonically identified with $F(X_i)$ and $F)X_j)$, respectively. With this identification, the image of an element $v$ in $F(X_i)$ under $(E_{Z_F})_{\mathrm{ST}}(f_{ij},j_{f_{ij}})$ is given by
\[
Z_F(f_{ij})(j_{f_{ij}}\otimes v)=F(f_{ij},j_{f_{ij}})(v),
\]
by definition of $Z_F$ on morphisms (see the proof of Lemma \ref{lemma:ZF}). This shows that $(E_{Z_F})_{\mathrm{ST}}(f_{ij},j_{f_{ij}})$ coincides with $F$ at the level of 1-morphisms, too. Since the target $\Vect_\K$ of the functors $(E_{Z_F})_{\mathrm{ST}}(f_{ij},j_{f_{ij}})$ and $F$ is a 1-category, this concludes the proof.
\end{proof}

\begin{proposition}
    The triangle \eqref{eq:triangle}  is the identity on $\mathrm{Hom}_{\Vect_\K}(\mathcal{C}^{J},\Vect_\K)$.
\end{proposition}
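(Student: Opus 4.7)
The plan is to exploit the classical Eilenberg--Watts theorem on each fiber $\mathrm{Mod}_{J(X_i)}$ of $\mathcal{C}^J\to\mathcal{C}$, together with a factorization of 1-morphisms of $\mathcal{C}^J$, to compare $E$ with $E_{Z_{E_{\mathrm{ST}}}}$ on 0- and 1-simplices. Since $E$ is $\Vect_\K$-linear and additive/cocontinuous over $\mathcal{C}$, each restriction $E\bigr\vert_{X_i}\colon\mathrm{Mod}_{J(X_i)}\to\Vect_\K$ is additive and cocontinuous; the classical Eilenberg--Watts theorem thus yields a canonical natural isomorphism
\[
\eta^i_L\colon L\otimes_{J(X_i)} V_{X_i}\xrightarrow{\sim} E(X_i,L),\qquad V_{X_i}:=E(X_i,J(X_i)),
\]
where $V_{X_i}$ carries the left $J(X_i)$-action $j\cdot v:=E(\mathrm{id}_{X_i},j)(v)=\lambda_{X_i}(j)(v)$. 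By the construction of $Z_{E_{\mathrm{ST}}}$ recalled in Lemma \ref{lemma:ZF}, this left $J(X_i)$-module is precisely $Z_{E_{\mathrm{ST}}}(X_i)$, so $\eta^i_{L_{X_i}}$ already exhibits the agreement $E(X_i,L_{X_i})\cong L_{X_i}\otimes_{J(X_i)} Z_{E_{\mathrm{ST}}}(X_i)=E_{Z_{E_{\mathrm{ST}}}}(X_i,L_{X_i})$ on 0-simplices.

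For 1-simplices I would use the following factorization: every morphism $(f_{ij},\varphi_{f_{ij}})\colon (X_i,L_{X_i})\to (X_j,L_{X_j})$ of $\mathcal{C}^J$ splits as
\[
(X_i,L_{X_i})\xrightarrow{(\mathrm{id}_{X_i},\varphi_{f_{ij}})}\bigl(X_i,\,L_{X_j}\otimes_{J(X_j)}J(f_{ij})\bigr)\xrightarrow{(f_{ij},\mathrm{id})}(X_j,L_{X_j}),
\]
the witnessing 2-simplex of $\mathcal{C}^J$ lying over the degenerate 2-simplex $f_{ij}=f_{ij}\circ\mathrm{id}_{X_i}$ of $\mathcal{C}$, for which the condition \eqref{eq:diagram-L-J} is trivially satisfied. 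Applying $E$ and invoking naturality of $\eta^i$ (equivalently, functoriality of $E\bigr\vert_{X_i}$ on the image of $\iota_{X_i}$), the first factor $E(\mathrm{id}_{X_i},\varphi_{f_{ij}})$ corresponds, under the $\eta$'s, to $\varphi_{f_{ij}}\otimes\mathrm{id}_{V_{X_i}}$.

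It therefore suffices to identify the second factor $E(f_{ij},\mathrm{id})$ with $\mathrm{id}_{L_{X_j}}\otimes Z_{E_{\mathrm{ST}}}(f_{ij})$ under the $\eta$'s. First I treat the case $L_{X_j}=J(X_j)$: for each $j\in J(f_{ij})$ the same factorization applied to $(f_{ij},\tilde{j})$, with $\tilde{j}\colon J(X_i)\to J(f_{ij})$, $1\mapsto j$, gives $(f_{ij},\tilde{j})=(f_{ij},\mathrm{id})\circ(\mathrm{id}_{X_i},\tilde{j})$; since $(f_{ij},\tilde{j})$ is, by Proposition \ref{prop:st-category}, the image in $\mathcal{C}^J$ of the $\mathcal{C}^J_{\mathrm{ST}}$-morphism $(f_{ij},j)$, the construction of $Z_{E_{\mathrm{ST}}}$ from Lemma \ref{lemma:ZF} forces $E(f_{ij},\mathrm{id})$ to send $j\otimes v$ to $Z_{E_{\mathrm{ST}}}(f_{ij})(j\otimes v)$ under the $\eta$'s, settling this case. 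For general $L_{X_j}\in\mathrm{Mod}_{J(X_j)}$, both $E(f_{ij},\mathrm{id})$ and $\mathrm{id}_{L_{X_j}}\otimes Z_{E_{\mathrm{ST}}}(f_{ij})$ are natural in $L_{X_j}$ -- naturality on the $E$-side coming from 2-simplices in $\mathcal{C}^J$ over the square $\mathrm{id}_{X_j}\circ f_{ij}=f_{ij}\circ\mathrm{id}_{X_i}$ -- and they are natural transformations between functors of $L_{X_j}$ that are cocontinuous (the ones on the $E$-side because $E\bigr\vert_{X_i}$ is and $-\otimes_{J(X_j)} J(f_{ij})$ is a left adjoint), so equality for $L_{X_j}=J(X_j)$ propagates to every $L_{X_j}$ since $J(X_j)$ generates $\mathrm{Mod}_{J(X_j)}$ under colimits. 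Composing the two factors gives $E(f_{ij},\varphi_{f_{ij}})=E_{Z_{E_{\mathrm{ST}}}}(f_{ij},\varphi_{f_{ij}})$, and since $\Vect_\K$ is a 1-category agreement on 0- and 1-simplices concludes the proof. The main technical burden will be tracking the naturality of the Eilenberg--Watts isomorphisms $\eta^i,\eta^j$ together with the ``absorption'' isomorphism $(L_{X_j}\otimes_{J(X_j)}J(f_{ij}))\otimes_{J(X_i)}V_{X_i}\cong L_{X_j}\otimes_{J(X_j)}(J(f_{ij})\otimes_{J(X_i)}V_{X_i})$ throughout the identification.
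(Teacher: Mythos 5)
Your proposal is correct and follows essentially the same route as the paper: Eilenberg--Watts on each fiber $\mathrm{Mod}_{J(X_i)}$ to match objects, the factorization $(f_{ij},\varphi_{f_{ij}})=(f_{ij},\mathrm{id})\circ(\mathrm{id}_{X_i},\varphi_{f_{ij}})$ witnessed by a $2$-simplex of $\mathcal{C}^J$, naturality of the Eilenberg--Watts isomorphism for the first factor, and determination of the second factor by its value on the free module $J(X_j)$ (your ``agree on the generator, propagate by cocontinuity'' is exactly the paper's invocation of Eilenberg--Watts for the natural transformation $E(\mathrm{id}_-)$ between the cocontinuous functors $E\vert_{X_i}\circ(-\otimes_{J(X_j)}J(f_{ij}))$ and $E\vert_{X_j}$). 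No gaps.
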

\begin{proof}
Let $E\in \mathrm{Hom}_{\Vect_\K}(\mathcal{C}^{J},\Vect_\K)$, and let $(X_i,L_{X_i})$ be an object in $\mathcal{C}^J$. By Proposition \ref{prop:anom-to-lin}, Lemma \ref{lemma:ZF} and Proposition \ref{prop:st-category}, we have
\[
E_{Z_{E_{\mathrm{ST}}}}(X_i,L_i)=L_{X_i}\otimes_{J(X_i)}Z_{E_{\mathrm{ST}}}(X_i)=
L_{X_i}\otimes_{J(X_i)}E_{\mathrm{ST}}(X_i)=L_{X_i}\otimes_{J(X_i)}E(X_i,J(X_i)).
\]
On the other hand, $
E(X_i,L_{X_i})=E\bigr\vert_{X_i}(L_{X_i})$.
Since $E\bigr\vert_{X_i}$ is additive and cocontinuous by definition of $\mathrm{Hom}_{\Vect_\K}(\mathcal{C}^{J},\Vect_\K)$, by the Eilenberg-Watts theorem we have
$
E\bigr\vert_{X_i}\cong -\otimes_{J(X_i)} E\bigr\vert_{X_i}(J(X_i))$,
so that we have a natural isomorphism
\[
E(X_i,L_{X_i})\cong L_{X_i}\otimes_{J(X_i)} E\bigr\vert_{X_i}(J(X_i))=
L_{X_i}\otimes_{J(X_i)} E(X_i,J(X_i))=E_{Z_{E_{\mathrm{ST}}}}(X_i,L_i).
\]
More precisely, the isomorphism $L_{X_i}\otimes_{J(X_i)} E(X_i,J(X_i))\xrightarrow{\sim}E(X_i,L_{X_i})$ is given as follows:
\begin{align*}
    L_{X_i}\otimes_{J(X_i)}E(X_i,J(X_i))&\xrightarrow{E(\mathrm{id}_{X_i},\varphi_{-})}E(X_i,L_{X_i})\\
    l_{i}\otimes v_i&\mapsto E(\mathrm{id}_{X_i},\varphi_{l_i})(v_i),
\end{align*}
where $\varphi_{l_i}\colon J(X_i)\to L_{X_i}$ is the unique morphism of right $J(X_i)$-modules with $\varphi_{l_i}(1)=l_i$.
\end{proof}

 Let now $(f_{ij},\varphi_{f_{ij}})\colon (X_i,L_{X_i})\to (X_j,L_{X_j})$ be a morphism in $\mathcal{C}^J$. Then $E_{Z_{E_{\mathrm{ST}}}}(f_{ij},\varphi_{f_{ij}})$ is the composition
\begin{align*}
L_{X_i}\otimes_{J(X_i)} Z_{E_{\mathrm{ST}}}(X_i) &\xrightarrow{\varphi_{f_{ij}}\otimes \mathrm{id}_{Z_{E_{\mathrm{ST}}}(X_i)}}
L_{X_j}\otimes_{J(X_j)}J(f_{ij})\otimes_{J(X_i)} Z_{E_{\mathrm{ST}}}(X_i)\\
&\xrightarrow{\mathrm{id}_{L_{X_j}}\otimes Z_{E_{\mathrm{ST}}}(f_{ij})} L_{X_j}\otimes_{J(X_j)}Z_{E_{\mathrm{ST}}}(X_j),
\end{align*}
i.e., the composition
\begin{align*}
L_{X_i}\otimes_{J(X_i)} E(X_i,J(X_i)) &\xrightarrow{\varphi_{f_{ij}}\otimes \mathrm{id}_{E(X_i,J(X_i))}}
L_{X_j}\otimes_{J(X_j)}J(f_{ij})\otimes_{J(X_i)} E(X_i,J(X_i))\\
&\xrightarrow{\mathrm{id}_{L_{X_j}}\otimes Z_{E_{\mathrm{ST}}}(f_{ij})} L_{X_j}\otimes_{J(X_j)}E(X_j,J(X_j)).
\end{align*}
 Now we use the Eilenberg-Watts theorem again. The naturality of the isomorphism $
E\bigr\vert_{X_i}\cong -\otimes_{J(X_i)} E\bigr\vert_{X_i}(J(X_i))=-\otimes_{J(X_i)} E(X_i, J(X_i))$ gives us the commutative diagram
\[
\begin{tikzcd}
L_{X_i}\otimes_{J(X_i)} E(X_i,J(X_i)) \ar[d,"\wr"'] \ar[rrr,"\varphi_{f_{ij}}\otimes \mathrm{id}_{E(X_i,J(X_i))}"]&&&
L_{X_j}\otimes_{J(X_j)}J(f_{ij})\otimes_{J(X_i)} E(X_i,J(X_i))\ar[d,"\wr"]\\
{E\bigr\vert_{X_i}(L_{X_i})}\ar[rrr,"E\vert_{X_i}(\varphi_{f_{ij}})"]&&&{E\bigr\vert_{X_i}(L_{X_j}\otimes_{J(X_j)}J(f_{ij}))}
\end{tikzcd},
\]
i.e., the commutative diagram
\[
\begin{tikzcd}
L_{X_i}\otimes_{J(X_i)} E(X_i,J(X_i)) \ar[d,"\wr"'] \ar[rrr,"\varphi_{f_{ij}}\otimes \mathrm{id}_{E(X_i,J(X_i))}"]&&&
L_{X_j}\otimes_{J(X_j)}J(f_{ij})\otimes_{J(X_i)} E(X_i,J(X_i))\ar[d,"\wr"]\\
{E(X_i,L_{X_i})}\ar[rrr,"{E(\mathrm{id}_{X_i},\varphi_{f_{ij}})}"]&&&
{E(X_i,L_{X_j}\otimes_{J(X_j)}J(f_{ij}))}
\end{tikzcd}
\]
For every right $J(X_j)$-module $L_{X_j}$,
we have a morphism
\[
(X_i,L_{X_j}\otimes_{J(X_j)}J(f_{ij}))\xrightarrow{(f_{ij},\mathrm{id}_{L_{X_j}\otimes_{J(X_j)}J(f_{ij})})}(X_j,L_{X_j})
\]
in $\mathcal{C}^J$ and so a morphism
\[
E(X_i,L_{X_j}\otimes_{J(X_j)}J(f_{ij}))\xrightarrow{E(f_{ij},\mathrm{id}_{L_{X_j}\otimes_{J(X_j)}J(f_{ij})})}E(X_j,L_{X_j})
\]
in $\Vect_\K$. This gives linear natural transformation between cocontinuous additive functors 
     \begin{equation*}    
            \begin{tikzcd}
                {}_{\K}\mathrm{Mod}_{J(X_j)} \arrow[rr,"E\vert_{X_i}\circ (-\otimes_{J(X_j)}J(f_{ij})", bend left=30, ""{name=U}]\arrow[rr, "E\vert_{X_j}"', bend right=30, ""{name=D}]&& \phantom{mm}{}_{\K}\mathrm{Mod}_{\K}=\Vect_\K\arrow[Rightarrow,shorten >=3mm, shorten <=3mm, from=U, to=D,"E(\mathrm{id}_-)"],
            \end{tikzcd}
        \end{equation*}
where $E(\mathrm{id}_{L_{X_j}})$ is a shorthand notation for $E(f_{ij},\mathrm{id}_{L_{X_j}\otimes_{J(X_j)}J(f_{ij})})$. By the Eilenberg--Watts theorem we then have a commutative diagram
\[
\begin{tikzcd}[column sep=.7em]
L_{X_j}\otimes_{J(X_j)}J(f_{ij})\otimes_{J(X_i)}E(X_i,J(X_i))\ar[r,"{\mathrm{id}_{L_{X_j}}\otimes E(\mathrm{id}_{X_i},\varphi_-)}"]\ar[d,"\wr"]& L_{X_j}\otimes_{J(X_j)}E(X_i,J(f_{ij}))\ar[d,"\wr"]\ar[rr,"{\mathrm{id}_{L_{X_j}}\otimes E(f_{ij},\mathrm{id}_{J(f_{ij})})}"]&&
L_{X_j}\otimes_{J(X_j)}E(X_j,J(X_j))\ar[d,"\wr"]
\\
E\bigr\vert_{X_i}(L_{X_j}\otimes_{J(X_j)}J(f_{ij}))\ar[r,equal]&(E\bigr\vert_{X_i}\circ (-\otimes_{J(X_j)}J(f_{ij}))(L_{X_j})\ar[rr,"{E(f_{ij},\mathrm{id}_{L_{X_j}\otimes_{J(X_j)}J(f_{ij})})}"]&& E\bigr\vert_{X_j}(L_{X_j})
\end{tikzcd},
\]
and so a commutative diagram
\[
\begin{tikzcd}
L_{X_j}\otimes_{J(X_j)}J(f_{ij})\otimes_{J(X_i)}E(X_i,J(X_i))\ar[rrr,"{\mathrm{id}_{L_{X_j}}\otimes E(f_{ij},\varphi_-)}"]\ar[d,"\wr"]&&& 
L_{X_j}\otimes_{J(X_j)}E(X_j,J(X_j))\ar[d,"\wr"]
\\
E(X_i,L_{X_j}\otimes_{J(X_j)}J(f_{ij}))\ar[rrr,"{E(f_{ij},\mathrm{id}_{L_{X_j}\otimes_{J(X_j)}J(f_{ij})})}"]&&& E(X_j,L_{X_j})
\end{tikzcd},
\]
where in the top horizontal arrow we used that for any $j_{f_{ij}}$ in $J(f_{ij})$ we have  the 2-simplex
\[
 \begin{tikzcd}
            & {(X_i,J(f_{ij}))} \arrow[dr,"{(f_{ij},\mathrm{id}_{J(f_{ij})})}"]&\\
            {(X_i,J(X_i))}\arrow[ru,"{(\mathrm{id}_{X_i},\varphi_{j_{f_{ij}}})}"]\arrow[rr,"{(f_{ij},\varphi_{j_{f_{ij}}})}"']&\arrow[u,Leftarrow,shorten <=2mm, shorten >=2mm,"\mathrm{id}"']{}&{(X_j,J(X_j))}
            \end{tikzcd}
\]
in $\mathcal{C}^J$.
By Lemma \ref{lemma:ZF}, we have
\[
Z_{E_{\mathrm{ST}}}(f_{ij})(j_{f_{ij}}\otimes v_i)=E_{\mathrm{ST}}(f_{ij},j_{f_{ij}})(v_i)=
E(f_{ij},\varphi_{j_{f_{ij}}})(v_i),
\]
for any $j_{f_{ij}}$ in $J(f_{ij})$ and any $v_i$ in $E(X_i,J(X_i))$, that is,
\[
Z_{E_{\mathrm{ST}}}(f_{ij})=E(f_{ij},\varphi_{-})\colon J(f_{ij})\otimes_{J(X_i)}E(X_i,J(X_i))\to E(X_j,J(X_j)).
\]
Putting all the pieces together, we get the commutative diagram
\[
\hskip -1 cm
\begin{tikzcd}
    L_{X_i}\otimes_{J(X_i)} Z_{E_{\mathrm{ST}}}(X_i)\ar[d,"\wr"] \arrow[rr,"{\varphi_{f_{ij}}\otimes \mathrm{id}_{Z_{E_{\mathrm{ST}}}(X_i)}}"]&&
L_{X_j}\otimes_{J(X_j)}J(f_{ij})\otimes_{J(X_i)} Z_{E_{\mathrm{ST}}}(X_i)\ar[d,"\wr"]\arrow[rr,"{\mathrm{id}_{L_{X_j}}\otimes Z_{E_{\mathrm{ST}}}(f_{ij})}"]&& L_{X_j}\otimes_{J(X_j)}Z_{E_{\mathrm{ST}}}(X_j)\ar[d,"\wr"]\\
{E(X_i,L_{X_i})}\ar[rr,"{E(\mathrm{id}_{X_i},\varphi_{f_{ij}})}"]&& E(X_i,L_{X_j}\otimes_{J(X_j)}J(f_{ij}))\ar[rr,"{E(f_{ij},\mathrm{id}_{L_{X_j}\otimes_{J(X_j)}J(f_{ij})})}"]&& E(X_j,L_{X_j})
\end{tikzcd},
\]
and so the commutative diagram
\[
\begin{tikzcd}
    E_{Z_{E_{\mathrm{ST}}}}(X_i,L_{X_i})\ar[d,"\wr"] \arrow[rr,"{E_{Z_{E_{\mathrm{ST}}}}(f_{ij},\varphi_{f_{ij}})}"]&& E_{Z_{E_{\mathrm{ST}}}}(X_j,L_{X_j})\ar[d,"\wr"]\\
{E(X_i,L_{X_i})}\ar[rr,"{E(f_{ij},\varphi_{f_{ij}})}"]&& E(X_j,L_{X_j})
\end{tikzcd},
\]
where in the bottom horizontal arrow we used  the 2-simplex
\[
 \begin{tikzcd}
            & {(X_i,L_{X_j}\otimes_{J(X_j)}J(f_{ij}))} \arrow[dr,"{(f_{ij},\mathrm{id}_{L_{X_j}\otimes_{J(X_j)} J(f_{ij})})}"]&\\
            {(X_i,L_{X_i})}\arrow[ru,"{(\mathrm{id}_{X_i},\varphi_{{f_{ij}}})}"]\arrow[rr,"{(f_{ij},\varphi_{{f_{ij}}})}"']&\arrow[u,Leftarrow,shorten <=2mm, shorten >=2mm,"\mathrm{id}"']{}&{(X_j,L_{X_j})}
            \end{tikzcd}
\]
in $\mathcal{C}^J$, and where the vertical arrows are the isomorphisms given by the Eilenberg--Watts theorem. Hence we see that the Eilenberg--Watts theorem identifies $E_{Z_{E_{\mathrm{ST}}}}$ with $E$ both at the objects and at the 1-morphisms level. Since the target $\Vect_\K$ of the functors $E_{Z_{E_{\mathrm{ST}}}}$ and $E$ is a 1-category, this concludes the proof.

\begin{proposition}
    The triangle \eqref{eq:triangle}  is the identity on anomalous representations of $\mathcal{C}$ with anomaly $J$.
\end{proposition}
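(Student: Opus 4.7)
The plan is to mirror the strategy of the two preceding propositions: start with an anomalous representation $Z$ with anomaly $J$, chase it through the composite $Z\mapsto E_Z\mapsto (E_Z)_{\mathrm{ST}}\mapsto Z_{(E_Z)_{\mathrm{ST}}}$, and match it with $Z$ on 0-simplices and 1-simplices of $\mathcal{C}$. Since the 2-simplex datum of an anomalous representation is only the coherence condition \eqref{eq:diagram:Z-J} (no additional structure) and higher simplices are trivial in $2\Vect_\K$, agreement on the structure maps will force agreement everywhere.

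Writing $F:=(E_Z)_{\mathrm{ST}}$, I would first identify the underlying vector space of $Z_F(X_i)$. By Proposition \ref{prop:anom-to-lin} and Proposition \ref{prop:st-category},
\[
F(X_i)=E_Z(X_i,J(X_i))=J(X_i)\otimes_{J(X_i)}Z(X_i)\cong Z(X_i),
\]
and by Lemma \ref{lemma:ZF} this is exactly the underlying vector space of $Z_F(X_i)$. To check that the left $J(X_i)$-module structure produced by the map $\lambda_{X_i}$ of Lemma \ref{lemma:map-of-monoids} agrees with the original action on $Z(X_i)$, I would note that via Proposition \ref{prop:st-category} the endomorphism $(\mathrm{id}_{X_i},j)$ in $\mathcal{C}^J_{\mathrm{ST}}$ corresponds to $(\mathrm{id}_{X_i},\varphi_j)$ in $\mathcal{C}^J$, where $\varphi_j\colon J(X_i)\to J(X_i)$ is left multiplication by $j$. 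Unwinding \eqref{eq:EZ2-morph} together with the identification $J(\mathrm{id}_{X_i})=J(X_i)$ and the canonical unit isomorphism then shows that $E_Z(\mathrm{id}_{X_i},\varphi_j)$ is indeed left multiplication by $j$ on $Z(X_i)$.

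Next, on a 1-simplex $f_{ij}\colon X_i\to X_j$ of $\mathcal{C}$, by construction in the proof of Lemma \ref{lemma:ZF} the structure map $Z_F(f_{ij})$ is defined on elementary tensors by
\[
Z_F(f_{ij})(w\otimes v)=F(f_{ij},w)(v)=E_Z(f_{ij},\varphi_w)(v),
\]
where $\varphi_w\colon J(X_i)\to J(f_{ij})$ is the unique right $J(X_i)$-linear map with $\varphi_w(1)=w$. Expanding the composition \eqref{eq:EZ2-morph} with $L_{X_i}=J(X_i)$ and $L_{X_j}=J(X_j)$, evaluating on $1\otimes v$, one reads off precisely $Z(f_{ij})(w\otimes v)$, so $Z_F(f_{ij})=Z(f_{ij})$ as morphisms of left $J(X_j)$-modules.

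The main subtlety is purely bookkeeping: one has to transport the left $J(-)$-actions coherently across the canonical unitors $J(X)\otimes_{J(X)}(-)\cong(-)$ and the convention $J(\mathrm{id}_X)=J(X)$, so that $Z_F(f_{ij})$ is verified to match $Z(f_{ij})$ as a map of left $J(X_j)$-modules and not merely as a $\K$-linear map. No categorical obstacle arises beyond this, and once agreement on 0- and 1-simplices is in hand, the commutativity of \eqref{eq:diagram:Z-J} for $Z_F$ becomes automatic, yielding the claimed identification of $Z_{(E_Z)_{\mathrm{ST}}}$ with $Z$.
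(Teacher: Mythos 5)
Your proposal is correct and follows essentially the same route as the paper's proof: unwind $Z_{(E_Z)_{\mathrm{ST}}}$ on objects and 1-morphisms via Lemma \ref{lemma:ZF} and Proposition \ref{prop:anom-to-lin}, observe that it reproduces $Z(X_i)$ and $Z(f_{ij})$, and conclude since higher simplices only impose constraints rather than data. Your explicit check that the left $J(X_i)$-module structure coming from $\lambda_{X_i}$ agrees with the original action on $Z(X_i)$ is a point the paper leaves implicit, but it is a refinement of the same argument rather than a different one.
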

\begin{proof}
Let $X_i$ be an object of $\mathcal{C}$. Then, by Lemma \ref{lemma:ZF} and Proposition \ref{prop:anom-to-lin} we have
\[
Z_{(E_Z)_{\mathrm{ST}}}(X_i)=(E_Z)_{\mathrm{ST}}(X_i)=E_Z(X_i,J(X_i))=J(X_i)\otimes_{J(X_i)}Z(X_i)=Z(X_i).
\]
If $f_{ij}\colon X_i\to X_j$ is a morphism in $\mathcal{C}$, then by Lemma \ref{lemma:ZF} the morphism
$
Z_{(E_Z)_{\mathrm{ST}}}(f_{ij})\colon J(f_{ij})\otimes_{J(X_i)}Z_{(E_Z)_{\mathrm{ST}}}(X_i)\to Z_{(E_Z)_{\mathrm{ST}}}(X_j)
$
is given by
\[
Z_{(E_Z)_{\mathrm{ST}}}(f_{ij})(j_{f_{ij}}\otimes v_i)=(E_Z)_{\mathrm{ST}}(f_{ij},j_{f_{ij}})(v_i)=E_Z(f_{ij},\varphi_{j_{f_{ij}}})(v_i),
\]
where $\varphi_{j_{f_{ij}}}\colon J(X_i)\to J(f_{ij})$ is the unique morphism of right $J(X_i)$-modules with $\varphi_{j_{f_{ij}}}(1)=j_{f_{ij}}$. By Proposition \ref{prop:anom-to-lin}, $E_Z(f_{ij},\varphi_{j_{f_{ij}}})$ is the composition
\[
Z(X_i)=J(X_i)\otimes_{J(X_i)} Z(X_i) \xrightarrow{\varphi_{j_{f_{ij}}}\otimes \mathrm{id}_{Z(X_i)}}
J(f_{ij})\otimes_{J(X_i)} Z(X_i)\xrightarrow{\mathrm{id}_{ Z(f_{ij})}} Z(X_j)
\]
and so it is given by
$
v_i\mapsto Z(f_{ij})(j_{f_{ij}}\otimes v_i)$.
This shows $Z_{(E_Z)_{\mathrm{ST}}}(f_{ij})= Z(f_{ij})$. So we see that $Z_{(E_Z)_{\mathrm{ST}}}$ and $Z$ coincide both at the level of objects and at the level of 1-morphisms of $\mathcal{C}$. Since higher simplices in $\mathcal{C}$ do not provide additional data for an anomalous representation, but constraints for the data $\{Z(X_i),Z(f_{ij})\}$, see Remark \ref{rem:Z-explicit}, this implies $Z_{(E_Z)_{\mathrm{ST}}}=Z$.
\end{proof}

Summing up, we have { proved the following result.
 \begin{theorem}\label{thm:equivalences}
There is an explicit commuting diagram of equivalences
\begin{equation*}
            \begin{tikzcd}[column sep=-3.5em]
               {\mathrm{Hom}_{\mathrm{Vect}_\K}(\mathcal{C}^{J},\mathrm{Vect}_\K)}\arrow[rr]\arrow[dr]&& {\mathrm{Hom}_{\K}(\mathcal{C}^{J}_{\mathrm{ST}},\mathrm{Vect}_\K)}\arrow[dl]\arrow[ll]\\
                {}&\{\text{anomalous representations of $\mathcal{C}$ with anomaly $J$}\}\arrow[ur]\arrow[ul].&{}
            \end{tikzcd}
        \end{equation*}
 \end{theorem}
 }

\begin{example}
Let $G$ be a finite group, and let $\alpha$ be a $\K^\ast$-valued 2-cocycle on $G$, and $J_\alpha$ be the composition of $\alpha$ with the inclusion $\mathbf{B}^2\K^\ast\hookrightarrow 2\Vect_K$. Then {Theorem} \ref{thm:equivalences} 
recovers the classical equivalence between projective representations of $G$ with 2-cocycle $\alpha$ and linear representations of $G^\alpha$ such that $\K^\ast$ acts as scalars.
\end{example}

\appendix
\section{The twisted group algebra $\K^\alpha[G]$ as a Kan extension}
In this final section we show how one recovers the classical equivalence { between the category of
projective representations of $G$ with 2-cocycle $\alpha$ and that of
$\K^\alpha[G]$-modules,}
where $\K^\alpha[G]$ is the twisted group algebra of $G$, with twist given by the 2-cocycle $\alpha$, within the framework presented in the main body of the article.
To begin with, recall we have shown that projective representations of $G$ with 2-cocycle $\alpha$ are equivalently anomalous representations of $\mathbf{B}G$ with anomaly $J_\alpha$, i.e., lax homotopy commutative diagrams of the form
\begin{equation*}
            \begin{tikzcd}
            \mathbf{B}G
                \arrow[rr, "J_\alpha"]\arrow[dr]&{}&
                2\mathrm{Vect}_\K\\
                {}&\ast\arrow[u,shorten <=3pt,shorten > =1pt,Rightarrow,"Z"',pos=.5]\ar[ur,"{\K}"']&{}\\
            \end{tikzcd},
                   \end{equation*}
where now we are stressing that the rightmost bottom arrow picks the algebra $\K$. Forgetting the rightmost algebra we are left with a diagram of the form
\begin{equation}\label{eq:to-be-kan-extended}
            \begin{tikzcd}
            \mathbf{B}G
                \arrow[rr, "J_\alpha"]\arrow[dr]&{}&
                2\mathrm{Vect}_\K\\
                {}&\ast&{}\\
            \end{tikzcd},
                   \end{equation}
and we can look for a universal completion of it of the form                   
\begin{equation}\label{eq:right-kan}
            \begin{tikzcd}
            \mathbf{B}G
                \arrow[rr, "J_\alpha"]\arrow[dr]&{}&
                2\mathrm{Vect}_\K\\
                {}&\ast\arrow[u,shorten <=3pt,shorten > =1pt,Rightarrow,"\Theta"',pos=.5]\ar[ur,"{A}"']&{}\\
            \end{tikzcd},
                   \end{equation}
where universality means that, if a completion of the form 
               \begin{equation}\label{eq:completion-B}
            \begin{tikzcd}
            \mathbf{B}G
                \arrow[rr, "J_\alpha"]\arrow[dr]&{}&
                2\mathrm{Vect}_\K\\
                {}&\ast\arrow[u,shorten <=3pt,shorten > =1pt,Rightarrow,"N"',pos=.5]\ar[ur,"{B}"']&{}\\
            \end{tikzcd}
                   \end{equation}
is given, then \eqref{eq:completion-B} uniquely factorizes as
\begin{equation}\label{eq:kan-factorization}
            \begin{tikzcd}
            \mathbf{B}G
                \arrow[rr, "J_\alpha"]\arrow[dr]&{}&
                2\mathrm{Vect}_\K\\
                {}&\ast\arrow[u,shorten <=3pt,shorten > =1pt,Rightarrow,"\Theta"',pos=.5]
            \ar[ur,"{A}"]
            \arrow[ur,bend right=50,"B"']&{}
            \arrow[ul,shorten <=12pt,shorten > =27pt,Rightarrow,"M",pos=.3]\\
            \end{tikzcd},
                   \end{equation}
where as usual uniqueness is up to homotopies that are unique up to higher homotopies, etc. The universal diagram \eqref{eq:right-kan}, if it exists, is called the \emph{right Kan extension} of $J_\alpha$ along the terminal morphism $\mathbf{B}G\to \ast$. as usual when dealing with universal properties, the right Kan extension, if it exists, is unique.
\begin{lemma}
The right Kan extension  \eqref{eq:right-kan} exists.   
\end{lemma}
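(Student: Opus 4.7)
The plan is to construct $A$ and $\Theta$ explicitly and then verify both the lax-commutativity of \eqref{eq:right-kan} and its universal property. First I would take $A$ to be the twisted group algebra $\K^\alpha[G]$: the $\K$-vector space with basis $\{e_g\}_{g\in G}$ and multiplication $e_g\cdot e_h = \alpha(g,h)\,e_{gh}$, whose associativity is exactly the 2-cocycle condition on $\alpha$. Next I would define $\Theta$ on the unique object $\bullet$ of $\mathbf{B}G$ to be the regular right module $\K^\alpha[G]$ on itself, viewed as a $(\K,\K^\alpha[G])$-bimodule, i.e., as a 1-morphism $\K^\alpha[G]\to \K$ in $2\Vect_\K$; and at a 1-morphism $g\in G$ of $\mathbf{B}G$, I would take the 2-cell component $\Theta_g$ to be the right $\K^\alpha[G]$-linear endomorphism of $\K^\alpha[G]$ given by left multiplication by $e_g$. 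The compatibility at a 2-simplex $(g_1,g_2)$ in $\mathbf{B}G$ -- namely $\Theta_{g_1g_2}\cdot \alpha(g_1,g_2) = \Theta_{g_1}\circ\Theta_{g_2}$ -- is then immediate from the multiplication rule in $\K^\alpha[G]$.

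To establish the universal property, suppose one is given another completion $(B,N)$ as in \eqref{eq:completion-B}: an algebra $B$, a right $B$-module $N(\bullet)$, and right $B$-linear endomorphisms $N_g$ of $N(\bullet)$ satisfying the twisted cocycle $N_{g_1g_2}\cdot \alpha(g_1,g_2)=N_{g_1}\circ N_{g_2}$. A factorization as in \eqref{eq:kan-factorization} amounts to a 2-cell $M\colon B\Rightarrow A$ in $2\Vect_\K$, i.e., a $(\K^\alpha[G],B)$-bimodule $M$, such that pasting $\Theta$ with $M$ recovers $N$. I would define $M:=N(\bullet)$ as a right $B$-module, equipped with the left $\K^\alpha[G]$-action $e_g\cdot v := N_g(v)$: the cocycle on $\{N_g\}$ is precisely what is needed for this to be a well-defined algebra action, and the fact that each $N_g$ is right $B$-linear guarantees the two actions commute. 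One then verifies that the induced pasting reconstructs $N$ using the canonical identification $\K^\alpha[G]\otimes_{\K^\alpha[G]} M\cong M$ and the observation that $\Theta_g\otimes\mathrm{id}_M$ transports under this isomorphism to the action of $e_g$ on $M$, which is $N_g$ by construction. Uniqueness follows because the underlying right $B$-module of $M$ is forced to be $N(\bullet)$ (by matching with $N$ on objects), and the left $\K^\alpha[G]$-action is then forced by matching each $\Theta_g$ with $N_g$.

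The hard part will be keeping the bookkeeping straight: since \eqref{eq:right-kan} is lax rather than pseudo, the 2-cells $\Theta_g$ are not required to be invertible, and one must verify that the chosen direction of $\Theta_g$ is the one compatible with the lax filler appearing in the target diagram, so that the universal property produces (rather than demands the invertibility of) the factorization 2-cell $M$. Once these left/right module and 2-cell direction conventions are pinned down, the argument ultimately reduces to the classical fact that modules over $\K^\alpha[G]$ are the same data as vector spaces carrying a projective $G$-action of class $\alpha$, now parameterized over the auxiliary algebra $B$.
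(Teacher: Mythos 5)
Your proof is correct, but it takes a genuinely different route from the paper. The paper disposes of this lemma in one line by an abstract existence argument: $\mathbf{B}G$ is small and $2\Vect_\K$ is complete, so the right Kan extension along $\mathbf{B}G\to\ast$ exists by general theory (citing Mac Lane for the $1$-categorical case and Lurie for the higher version). It then defers the identification of that Kan extension to the \emph{next} lemma, whose proof is essentially what you wrote: take $(A,\Theta)=(\K^\alpha[G],\K^\alpha[G])$ with the $2$-cell components given by left multiplication by the generators $x_g$, check the twisted compatibility $\xi_g\circ\xi_h=\alpha(g,h)\,\xi_{gh}$, and verify universality by turning any completion $(B,N)$ into a $(\K^\alpha[G],B)$-bimodule via $x_g\cdot n:=\xi_g(n)$, with uniqueness coming from $\K^\alpha[G]\otimes_{\K^\alpha[G]}M\cong M$. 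So your argument merges the paper's two lemmas into one constructive proof. What the paper's abstract route buys is generality: the same existence statement applies verbatim to an arbitrary small $\mathcal{C}$ and arbitrary anomaly $J$ (as the paper exploits in its closing remark to define $\K^J[\mathcal{C}]$), where no explicit model of the Kan extension is available. What your route buys is self-containedness and an explicit witness, at the cost of being specific to $\mathbf{B}G$ and $J_\alpha$; it also silently proves more than the stated lemma. Your module-direction bookkeeping is consistent with the paper's convention that $2\Vect_\K(A_0,A_1)={}_{A_1}\mathbf{Mod}_{A_0}$, so the filler over the object is indeed a $(\K,\K^\alpha[G])$-bimodule, i.e.\ a right $\K^\alpha[G]$-module, and your cocycle identity matches the paper's equation $\xi_g\circ\xi_h=\xi_{gh}\cdot\alpha(g,h)$.
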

\begin{proof}
The category $\mathbf{B}G$ is small and $2\Vect_\K$ is complete. These conditions are sufficient to ensure the existence of right Kan extensions, see, e.g. \cite{maclane} for the classical case and \cite{htt} for the higher categorical version.    
\end{proof}
\begin{lemma}
Let $A$ be the right Kan extension  \eqref{eq:right-kan}. Then we have an equivalence { between the category 
of lax homotopy commutative diagrams of the form
\[\begin{tikzcd}
            \mathbf{B}G
                \arrow[rr, "J_\alpha"]\arrow[dr]&{}&
                2\mathrm{Vect}_\K\\
                {}&\ast\arrow[u,shorten <=3pt,shorten > =1pt,Rightarrow,"N"',pos=.5]\ar[ur,"{B}"']&{}\\
            \end{tikzcd}
\]
\vskip -.8 cm
\noindent
and the category of $(A,B)$-bimodules.}
\end{lemma}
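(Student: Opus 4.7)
The plan is to unwind the universal property of the right Kan extension and then apply the definition of the hom-categories of $2\Vect_\K$. By the very definition of right Kan extension recalled in \eqref{eq:kan-factorization}, any lax homotopy commutative diagram of the form \eqref{eq:completion-B} factors uniquely (in the homotopically coherent sense) through the universal one \eqref{eq:right-kan} via a 2-cell
\[
M\colon B\Rightarrow A
\]
in $2\Vect_\K$; conversely, any such 2-cell $M$ produces, by pasting with $\Theta$, a lax cone of the form \eqref{eq:completion-B}. This gives a natural equivalence between the category of lax cones with apex $B$ over $J_\alpha$ and the category $2\Vect_\K(B,A)$ of 2-cells from $B$ to $A$ in $2\Vect_\K$.

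Next, I would invoke the explicit description of hom-categories in $2\Vect_\K$ recalled in the Notation and conventions section: by definition,
\[
2\Vect_\K(B,A)={}_{A}\mathbf{Mod}_{B},
\]
i.e., a 1-morphism from $B$ to $A$ in $2\Vect_\K$ is precisely an $(A,B)$-bimodule, and 2-morphisms are bimodule intertwiners. Composing the two equivalences yields the desired equivalence between lax cones as in \eqref{eq:completion-B} and the category of $(A,B)$-bimodules.

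The main step to be careful with is the first one: one must verify that the higher categorical universal property of $A$ as a right Kan extension genuinely provides an equivalence of categories between lax cones with apex $B$ and $\mathrm{Hom}_{2\Vect_\K}(B,A)$, and not merely a bijection on equivalence classes. This is where one uses that right Kan extensions in $(\infty,n)$-categories are defined via a universal property that is itself an equivalence of mapping categories (rather than just of mapping sets); the relevant statement is standard and available in the references \cite{maclane, htt} cited for the existence of $A$. Once this is granted, the remainder of the argument is a direct unpacking of definitions, and naturality in $B$ is automatic from the construction.
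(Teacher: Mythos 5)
Your proof is correct and follows essentially the same route as the paper: the paper's (one-sentence) argument likewise combines the universal property of the right Kan extension with the identification $2\Vect_\K(B,A)={}_A\mathbf{Mod}_B$, and you have merely made both steps explicit. Your remark that the universal property must be an equivalence of mapping categories, not just a bijection of sets, is a worthwhile clarification but not a departure from the paper's argument.
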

\begin{proof}
 The category of natural transformations 
 \[
\begin{tikzcd}
\ast\arrow[rr,bend right=20,"A"']\arrow[rr,bend left=20,"B"]&\Uparrow_M& 2\Vect_\K
\end{tikzcd}
 \]
 is (equivalent to) the category of $(A,B)$-bimodules.
\end{proof}
\begin{lemma}
The right Kan extension $\eqref{eq:right-kan}$ is 
\begin{equation*}
            \begin{tikzcd}
            \mathbf{B}G
                \arrow[rr, "J_\alpha"]\arrow[dr]&{}&
                2\mathrm{Vect}_\K\\
                {}&\ast\arrow[u,shorten <=3pt,shorten > =1pt,Rightarrow,"{\K^\alpha[G]}"',pos=.7]\ar[ur,"{\K^\alpha[G]}"']&{}\\
            \end{tikzcd},
\end{equation*}            
where $\K^\alpha[G]$ is 
the $\alpha$-twisted group algebra , i.e., the $\K$-algebra generated by the elements $x_g$, with $g$ in the group $G$, with multiplication $x_g\cdot x_h=\alpha(g,h) x_{gh}$.
\end{lemma}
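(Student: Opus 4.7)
My plan is to verify the universal property \eqref{eq:kan-factorization} by a $2$-Yoneda-style representability argument. For each test apex $B$, I would describe the groupoid of lax completions \eqref{eq:completion-B} and identify it, naturally in $B$, with the hom-groupoid $2\Vect_\K(B,\K^\alpha[G])$, i.e., the category of $(\K^\alpha[G],B)$-bimodules. Once this natural equivalence is in place, representability identifies $\K^\alpha[G]$ as the right Kan extension, and the canonical 2-cell $\Theta$ displayed in the statement is read off as the image of the identity bimodule at $B = \K^\alpha[G]$.

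The first step is to unpack a lax natural transformation $N\colon \mathrm{const}_B \Rightarrow J_\alpha$. Since $\mathbf{B}G$ has a single object $\bullet$, this amounts to: a 1-morphism $M\colon B\to\K$ in $2\Vect_\K$, i.e., a right $B$-module $M$; for each $g\in G$ a naturality filler which, because $J_\alpha(g) = \K$ as a $(\K,\K)$-bimodule, reduces to a morphism of right $B$-modules $\Theta_g\colon M\to M$; and for each 2-simplex $(g_1,g_2)$ of $\mathbf{B}G$ the prism coherence, which is the exact analog of \eqref{eq:diagram-L-J} with $J(\Xi_{ijk})$ replaced by the scalar $\alpha(g_1,g_2)$. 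Because all bimodules involved are $\K$ and the tensor products therefore collapse, this coherence reduces to the single equation
\[
\Theta_{g_1 g_2} = \alpha(g_1, g_2)\, \Theta_{g_2}\circ\Theta_{g_1}.
\]

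The second step is to recognize this data as precisely a $\K^\alpha[G]$-module structure on $M$, commuting with the right $B$-action: the generator $x_g$ of $\K^\alpha[G]$ acts via $\Theta_g$, and the defining relation $x_{g_1}x_{g_2} = \alpha(g_1,g_2)\,x_{g_1 g_2}$ translates, upon passage from action to composition of endomorphisms, into exactly the displayed coherence. This yields the required natural equivalence between lax completions with apex $B$ and $(\K^\alpha[G],B)$-bimodules, and identifies the canonical $\Theta$: specializing to $B = \K^\alpha[G]$ and tracing through the identity bimodule $\K^\alpha[G]$ produces $\K^\alpha[G]$ itself (as a right $\K^\alpha[G]$-module) as the component at $\bullet$, with the $\Theta_g$'s being multiplication by $x_g$. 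By the universal property of representing objects, any other completion then factors uniquely through this one by tensoring over $\K^\alpha[G]$ with the corresponding bimodule.

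The main technical hurdle is purely clerical: one must carefully track the orientation of the 2-cell in \eqref{diagram-bottom2}, the side of each tensor product, and the left-versus-right action conventions in the Morita $2$-category $2\Vect_\K$, so that the cocycle relation extracted from the prism genuinely matches multiplication in $\K^\alpha[G]$ rather than in its opposite algebra or in $\K^{\alpha^{-1}}[G]$. All remaining checks are forced by Remark \ref{rem:unwind-cj} (applied to $\mathcal{C} = \mathbf{B}G$ and $J = J_\alpha$) together with the universal property of lax homotopy pullbacks.
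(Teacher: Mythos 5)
Your overall strategy coincides with the paper's: both arguments unpack a lax completion \eqref{eq:completion-B} with apex $B$ into a right $B$-module together with a family of right $B$-module endomorphisms $\{\Theta_g\}_{g\in G}$ subject to a twisted cocycle relation extracted from the coherence prism, recognize this data as a $(\K^\alpha[G],B)$-bimodule, and conclude by universality; your ``$2$-Yoneda'' packaging of the last step is only a cosmetic variant of the paper's direct check that every completion factors uniquely through $(\K^\alpha[G],\K^\alpha[G])$ via \eqref{eq:kan-factorization}.

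There is, however, a concrete error in the one formula on which the identification of the representing algebra rests. With the paper's conventions the prism coherence gives
\[
\Theta_{g_1}\circ\Theta_{g_2}\;=\;\alpha(g_1,g_2)\,\Theta_{g_1g_2}
\]
(the paper writes this as $\xi_g\circ\xi_h=\alpha(g,h)\,\xi_{gh}$), which is exactly what is needed for $x_g\mapsto\Theta_g$ to define a \emph{left} $\K^\alpha[G]$-action: $x_{g_1}\cdot(x_{g_2}\cdot m)=(x_{g_1}x_{g_2})\cdot m=\alpha(g_1,g_2)\,x_{g_1g_2}\cdot m$. The relation you display, $\Theta_{g_1g_2}=\alpha(g_1,g_2)\,\Theta_{g_2}\circ\Theta_{g_1}$, i.e.\ $\Theta_{g_2}\circ\Theta_{g_1}=\alpha(g_1,g_2)^{-1}\,\Theta_{g_1g_2}$, differs from this both in the order of composition and in $\alpha$ versus $\alpha^{-1}$; taken at face value it makes $M$ a right $\K^{\alpha^{-1}}[G]$-module, not a left $\K^\alpha[G]$-module. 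This is precisely the opposite-algebra/inverse-cocycle trap you flag in your closing paragraph, so the step you declare ``purely clerical'' is exactly the step that fails as written, and the claim that the coherence translates ``exactly'' into $x_{g_1}x_{g_2}=\alpha(g_1,g_2)x_{g_1g_2}$ is not established. The fix is to redo the orientation bookkeeping for the lax $2$-cell of \eqref{eq:completion-B} (note that Remark \ref{rem:unwind-cj}, which you invoke, describes a $2$-cell of the opposite orientation, defining $\mathcal{C}^J$ rather than completions under $J_\alpha$); once the relation is corrected, the rest of your argument goes through and agrees with the paper's.
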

\begin{proof}
A diagram of the form  \eqref{eq:completion-B} is the datum of a pair $(B,N)$, where $B$ is a $\K$-algebra and $N$ is a right $B$-module such that
\begin{enumerate}
\item For every $g$ in $G$ we have
a morphism of right $B$-modules 
\[
\xi_g\colon N\to N\otimes_B B= N.
\]
\item The morphisms $\xi_g$ are such that
\[
\xi_g\circ \xi_h=\xi_{gh}\cdot \alpha(g,h)= \alpha(g,h)\, \xi_{gh}. 
\]
\end{enumerate}
From this we see that the pair $(\K^\alpha[G],\K^\alpha[G])$, with $\xi_g$ the left multiplication by $x_g$ defines a diagram of the form \eqref{eq:completion-B}. To see that it is universal, notice that for every $(B,N)$, the multiplication
\[
x_g\cdot n=\xi_g(n)
\]
makes $N$ a $(\K^\alpha[G],B)$-bimodule, and with this bimodule structure the isomorphism of right $B$-modules 
\[
\K^\alpha[G]\otimes_{\K^\alpha[G]}N\cong N
\]
provides the factorization \eqref{eq:kan-factorization}. Uniqueness is clear: if $M$ is a $(\K^\alpha[G],B)$-bimodule providing another factorization then we have an isomorphism of right $B$-modules $M\cong \K^\alpha[G]\otimes_{\K^\alpha[G]}M\cong N$ by definition of the factorization. Under this isomorphism, the left $\K^\alpha[G]$-module structure induced on $N$ becomes a left $\K^\alpha[G]$-module structure on $\K^\alpha[G]\otimes_{\K^\alpha[G]}M$ that is seen to be the natural left $\K^\alpha[G]$-module structure on $\K^\alpha[G]\otimes_{\K^\alpha[G]}M$ and so the left $\K^\alpha[G]$-module structure on $M$. This shows that $M$ and $N$ with the induced left $\K^\alpha[G]$-module structure are isomorphic as $(\K^\alpha[G],B)$-bimodules.
\end{proof}
\begin{corollary}\label{cor:kan}
    We have an equivalence { between the category
of lax homotopy commutative diagrams of the form
\[
 \begin{tikzcd}
            \mathbf{B}G
                \arrow[rr, "J_\alpha"]\arrow[dr]&{}&
                2\mathrm{Vect}_\K\\
                {}&\ast\arrow[u,shorten <=3pt,shorten > =1pt,Rightarrow,"N"',pos=.5]\ar[ur,"{B}"']&{}\\
            \end{tikzcd}
\]
\vskip -.8 cm
\noindent
and the category of             
              $(\K^\alpha[G],B)$-bimodules.}
\end{corollary}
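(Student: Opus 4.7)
The plan is to deduce the corollary as an immediate consequence of the universal property of the right Kan extension, combined with the previous lemma identifying this Kan extension concretely as $(\K^\alpha[G],\K^\alpha[G])$. By the preceding lemma, for any completion of the form \eqref{eq:completion-B} with datum $(B,N)$, we have a unique (up to coherent homotopy) factorization
\[
\begin{tikzcd}
            \mathbf{B}G
                \arrow[rr, "J_\alpha"]\arrow[dr]&{}&
                2\mathrm{Vect}_\K\\
                {}&\ast\arrow[u,shorten <=3pt,shorten > =1pt,Rightarrow,"{\K^\alpha[G]}"',pos=.5]
            \ar[ur,"{\K^\alpha[G]}"]
            \arrow[ur,bend right=50,"B"']&{}
            \arrow[ul,shorten <=12pt,shorten > =27pt,Rightarrow,"M",pos=.3]\\
            \end{tikzcd},
\]
in which the remaining datum is a 2-cell $M$ going from the 1-morphism $\K^\alpha[G]\colon \ast\to \ast$ to the 1-morphism $B\colon \ast\to \ast$ in $2\Vect_\K$, i.e., precisely a $(\K^\alpha[G],B)$-bimodule.

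First I would make the bijection on objects explicit. Given a completion $(B,N)$ of \eqref{eq:to-be-kan-extended} as in \eqref{eq:completion-B}, I assign to it the $(\K^\alpha[G],B)$-bimodule whose right $B$-module structure is that of $N$ and whose left $\K^\alpha[G]$-action is given by $x_g\cdot n = \xi_g(n)$, where $\xi_g\colon N\to N$ is the component at $g\in G$ of the natural transformation $N$. The equation $\xi_g\circ \xi_h = \alpha(g,h)\,\xi_{gh}$ from the proof of the previous lemma is exactly the associativity of the $\K^\alpha[G]$-action, and left-right compatibility is automatic since each $\xi_g$ is right $B$-linear. Conversely, given a $(\K^\alpha[G],B)$-bimodule $M$, the 1-morphism $B$ together with $M$ seen as a right $B$-module with $\xi_g$ defined by left multiplication by $x_g$ gives back a completion of \eqref{eq:to-be-kan-extended}. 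These two constructions are manifestly inverse.

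The main thing to check beyond this bijection on objects is functoriality and that morphisms match up: a morphism between two completions $(B_1,N_1)\to (B_2,N_2)$ is a pair consisting of an algebra map (or more generally a bimodule) from $B_1$ to $B_2$, together with a compatible morphism of bimodules commuting with the family $\{\xi_g\}_{g\in G}$; after transport along the above assignment this becomes exactly a morphism of $(\K^\alpha[G],B)$-bimodules in the appropriate sense. The hard (or at least least mechanical) part is verifying the coherence of the 2-cell $M$ in \eqref{eq:kan-factorization} — that the prism determined by $M$, $\K^\alpha[G]$ and $B$ commutes in $2\Vect_\K$ — but this reduces to the single equation $x_g\cdot(-) = \xi_g$ which is built into the definition of the $\K^\alpha[G]$-action on $N$. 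Since the universal property of the Kan extension automatically guarantees both existence and uniqueness (up to contractible choice) of $M$, one obtains the claimed equivalence of categories.
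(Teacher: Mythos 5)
Your proposal is correct and follows essentially the same route as the paper: the corollary is obtained by combining the universal property of the right Kan extension (which identifies completions $(B,N)$ with $(A,B)$-bimodules, $A$ being the Kan extension) with the computation $A\cong\K^\alpha[G]$, and your explicit dictionary $x_g\cdot n=\xi_g(n)$ versus $\xi_g=x_g\cdot(-)$ is exactly the one appearing in the paper's proof that $(\K^\alpha[G],\K^\alpha[G])$ is the Kan extension. The only cosmetic difference is that you re-derive that dictionary inside the corollary's proof, whereas the paper isolates it in the preceding lemma and lets the corollary follow formally.
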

{By taking $B=\mathbb{K}$ in the above Corollary, we get the following.}
\begin{corollary}
    We have an equivalence of categories
 {
 between the category of  projective representations of $G$ with 2-cocycle $\alpha$ and the category of $\K^\alpha[G]$-modules.}
\end{corollary}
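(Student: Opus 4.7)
The plan is to chain together the equivalences already established. First, by Proposition \ref{prop:proj-vs-anomalous} applied to the canonical anomaly $J_\alpha$ with its canonical (invertible) $\alpha$-structure from Remark \ref{rem:inclusions}, we have an equivalence
\[
\{\text{projective representations of $\mathbf{B}G$ of class $\alpha$}\} \leftrightarrow \{\text{anomalous representations of $\mathbf{B}G$ with anomaly $J_\alpha$}\}.
\]
Combined with Example \ref{ex:BG}, which identifies projective representations of $\mathbf{B}G$ of class $\alpha$ with projective representations of the group $G$ with 2-cocycle $\alpha$, this reduces the problem to showing that anomalous representations of $\mathbf{B}G$ with anomaly $J_\alpha$ are equivalent to $\K^\alpha[G]$-modules.

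The second step is to observe that, by definition, an anomalous representation of $\mathbf{B}G$ with anomaly $J_\alpha$ is exactly a lax homotopy commutative diagram of the form \eqref{eq:completion-B} in which the lower right arrow $B$ is fixed to be the algebra $\K$. Thus I plan to specialize Corollary \ref{cor:kan} to the case $B = \K$: such diagrams are equivalent to $(\K^\alpha[G], \K)$-bimodules, which are simply left $\K^\alpha[G]$-modules, i.e.\ $\K^\alpha[G]$-modules in the standard sense.

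Assembling the three equivalences yields the desired chain
\[
\{\text{proj.\ rep.\ of $G$ with 2-cocycle $\alpha$}\} \leftrightarrow \{\text{anomalous rep.\ of $\mathbf{B}G$ with anomaly $J_\alpha$}\} \leftrightarrow \K^\alpha[G]\text{-modules}.
\]
I do not anticipate any real obstacle: all the conceptual work is already packaged into Proposition \ref{prop:proj-vs-anomalous}, Example \ref{ex:BG}, and Corollary \ref{cor:kan}. The only thing to be mildly careful about is the bookkeeping of sides (left vs.\ right modules, and the role of $\K$ appearing simultaneously as the target algebra of the anomalous representation and as the trivial bimodule $(\K,\K)$ coefficient of the Kan-extension picture); this is handled by noting that a $(\K^\alpha[G],\K)$-bimodule finite dimensional over $\K$ is literally the same datum as a left $\K^\alpha[G]$-module.
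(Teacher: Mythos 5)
Your proposal is correct and follows essentially the same route as the paper: identify projective representations of $G$ with class-$\alpha$ projective representations of $\mathbf{B}G$, pass to anomalous representations with anomaly $J_\alpha$ via Proposition \ref{prop:proj-vs-anomalous}, and then specialize Corollary \ref{cor:kan} to $B=\K$ so that $(\K^\alpha[G],\K)$-bimodules are just $\K^\alpha[G]$-modules. No substantive difference from the paper's argument.
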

\begin{remark}
Up to smallness issues, the construction presented in this section works for an arbitrary diagram of the form
\begin{equation*}
            \begin{tikzcd}
            \mathcal{C}
                \arrow[rr, "J"]\arrow[dr]&{}&
                2\mathrm{Vect}_\K\\
                {}&\ast&{}\\
            \end{tikzcd}.
                   \end{equation*}
To be rigorous, the construction in \cite{htt} only deals with $(\infty,1)$-categories, so it applies to the case of $\mathbf{B}G$ considered above but not directly to an arbitrary higher category $\mathcal{C}$. Here, since the target is $2\mathrm{Vect}_\K$, it is not restrictive to assume that $\mathcal{C}$ is a 2-category. In this case, Kan extensions of the kind we are considering here are implicitly considered in \cite{theo-reutter}.                   
We therefore see that there exists an algebra $\K^J[\mathcal{C}]$, unique up to Morita equivalence, such that there is an equivalence { between the category
of anomalous representations of $\mathcal{C}$ with anomaly $J$ and the category of $
\K^J[\mathcal{C}]\text{-modules}$.}
Notice that with this notation we have $\K^\alpha[G]\cong \K^{J_\alpha}[\mathbf{B}G]$.
\end{remark}

\end{document}